\newtheorem{theorem}{Theorem}[section]
\newtheorem{corollary}[theorem]{Corollary}
\newtheorem{lemma}[theorem]{Lemma}
\newtheorem{proposition}[theorem]{Proposition}
\newtheorem{definition}[theorem]{Definition}
\newtheorem{remark}[theorem]{Remark}
\newtheorem{example}[theorem]{Example}
\numberwithin{equation}{section}
\begin{document}
\title[Index theory on Riemannian foliations]{Index theory for basic Dirac
operators \\
on Riemannian foliations}
\author[J.~Br\"{u}ning]{Jochen Br\"{u}ning}
\address{Institut f\"{u}r Mathematik \\
Humboldt Universit\"{a}t zu Berlin \\
Unter den Linden 6 \\
D-10099 Berlin, Germany}
\email[J.~Br\"{u}ning]{bruening@mathematik.hu-berlin.de}
\author[F. W.~Kamber]{Franz W.~Kamber${}^1$}
\address{Department of Mathematics, University of Illinois \\
1409 W. Green Street \\
Urbana, IL 61801, USA}
\email[F. W.~Kamber]{kamber@math.uiuc.edu}
\author[K.~Richardson]{Ken Richardson}
\address{Department of Mathematics \\
Texas Christian University \\
Fort Worth, Texas 76129, USA}
\email[K.~Richardson]{k.richardson@tcu.edu}
\thanks{${}^1$Supported in part by The National Science Foundation under
Grant DMS-9504084.}
\subjclass[2000]{53C12, 57R30, 58G10}
\keywords{foliation, basic, index, transversally elliptic}
\date{\today }

\begin{abstract}
In this paper we prove a formula for the analytic index of a basic
Dirac-type operator on a Riemannian foliation, solving a problem that has
been open for many years. We also consider more general indices given by
twisting the basic Dirac operator by a representation of the orthogonal
group. The formula is a sum of integrals over blowups of the strata of the
foliation and also involves eta invariants of associated elliptic operators.
As a special case, a Gauss-Bonnet formula for the basic Euler characteristic
is obtained using two independent proofs.
\end{abstract}

\maketitle
\tableofcontents

\section{Introduction}

\vspace{1pt}Let $\left( M,\mathcal{F}\right) $ be a smooth, closed manifold
endowed with a Riemannian foliation. Let $D_{b}^{E}:\Gamma _{b}\left(
M,E^{+}\right) \rightarrow \Gamma _{b}\left( M,E^{-}\right) $ be a basic,
transversally elliptic differential operator acting on the basic sections of
a foliated vector bundle $E$. The basic index $\mathrm{ind}_{b}\left(
D_{b}^{E}\right) $ is known to be a well-defined integer, and it has been an
open problem since the 1980s to write this integer in terms of geometric and
topological invariants. Our main theorem (Theorem \ref{basicIndexTheorem})
expresses $\mathrm{ind}_{b}\left( D_{b}^{E}\right) $ as a sum of integrals
over the different strata of the Riemannian foliation, and it involves the
eta invariant of associated equivariant elliptic operators on spheres normal
to the strata. The result is%
\begin{align*}
\mathrm{ind}_{b}\left( D_{b}^{E}\right) &= \int_{\widetilde{M_{0}}\diagup 
\overline{\mathcal{F}}}A_{0,b}\left( x\right) ~\widetilde{\left\vert
dx\right\vert }+\sum_{j=1}^{r}\beta \left( M_{j}\right) ~,~ \\
\beta \left( M_{j}\right) &= \frac{1}{2}\sum_{\tau }\frac{1}{n_{\tau } 
\mathrm{rank~}W^{\tau }} \left( -\eta \left( D_{j}^{S+,\tau }\right) +
h\left( D_{j}^{S+,\tau }\right) \right) \int_{\widetilde{M_{j}}\diagup 
\overline{\mathcal{F}}}A_{j,b}^{\tau }\left( x\right) ~\widetilde{\left\vert
dx\right\vert }\ .
\end{align*}
The notation will be explained later; the integrands $A_{0,b}\left( x\right) 
$ and $A_{j,b}^{\tau }\left( x\right) $ are the familar Atiyah-Singer
integrands corresponding to local heat kernel supertraces of induced
elliptic operators over closed manifolds. Even in the case when the operator 
$D$ is elliptic, this result was not known previously. We emphasize that
every part of the formula is explicitly computable from local information
provided by the operator and foliation. Even the eta invariant of the
operator $D_{j}^{S+,\tau }$ on a sphere is calculated directly from the
principal transverse symbol of the operator $D_{b}^{E}$ at one point of a
singular stratum. The de Rham operator provides an important example
illustrating the computability of the formula, yielding the basic
Gauss-Bonnet Theorem (Theorem \ref{BasicGaussBonnet}).

\vspace{1pt}This new theorem is proved by first writing $\mathrm{ind}%
_{b}\left( D_{b}^{E}\right) $ as the invariant index of a $G$-equivariant,
transversally elliptic operator $\mathcal{D}$ on a $G$-manifold $\widehat{W}$
associated to the foliation, where $G$ is a compact Lie group of isometries.
Using our equivariant index theorem in \cite{BKR}, we obtain an expression
for this index in terms of the geometry and topology of $\widehat{W}$ and
then rewrite this formula in terms of the original data on the foliation.

We note that a recent paper of Gorokhovsky and Lott addresses this
transverse index question on Riemannian foliations in a very special case.
Using a different technique, they 
prove a formula for the 
index of a basic Dirac operator that is distinct from our formula, in the
case where all the infinitesimal holonomy groups of the foliation are
connected tori and if Molino's commuting sheaf is abelian and has trivial
holonomy (see \cite{GLott}). Our result requires at most mild topological
assumptions on the transverse structure of the strata of the Riemannian
foliation. In particular, the Gauss-Bonnet Theorem for Riemannian foliations
(Theorem \ref{BasicGaussBonnet}) is a corollary and requires no assumptions
on the structure of the Riemannian foliation.

The paper is organized as follows. The definitions of the basic sections,
holonomy-equivariant vector bundles, basic Clifford bundles, and basic
Dirac-type operators are given in Section \ref{PreliminariesSection}. In
Section \ref{EquivariantTheorySection}, we describe the Fredholm properties
of the basic index and show how to construct the $G$-manifold $\widehat{W}$
and the $G$-equivariant operator $\mathcal{D}$, using a generalization of
Molino theory \cite{Mo}. We also use our construction to obtain asymptotic
expansions and eigenvalue asymptotics of transversally elliptic operators on
Riemannian foliations in Section \ref{AsymptoticsSection}, which is of
independent interest. In Section \ref{canonIsotropyBundlesSection}, we
construct bundles associated to representions of the isotropy subgroups of
the $G$-action; these bundles are used in the main theorem. In Section \ref%
{desingularizationSection}, we describe a method of cutting out tubular
neighborhoods of the singular strata of the foliation and doubling the
remainder to produce a Riemannian foliation with fewer strata. We also
deform the operator and metric and determine the effect of this
desingularization operation on the basic index. We recall the equivariant
index theorem in \cite{BKR} in Section \ref{equivIndexSection} and prove the
basic index theorem in Section \ref{basicIndexThmSection}. Finally, we prove
a generalization of this theorem to representation-valued basic indices in
Section \ref{representationValuedSection}.

We illustrate the theorem with a collection of examples. These include
foliations by suspension (Section \ref{suspensionSection}), 
a transverse signature (Section \ref{transverseSignSection}), 
and the basic Gauss-Bonnet Theorem (Section \ref{euler}).

One known application of our theorem is Kawasaki's Orbifold Index Theorem (%
\cite{Kawas1}, \cite{Kawas2}). It is known that every orbifold is the leaf
space of a Riemannian foliation, where the leaves are orbits of an
orthogonal group action such that all isotropy subgroups have the same
dimension. In particular, the contributions from the eta invariants in our
transverse signature example (Section \ref{transverseSignSection}) agree
exactly with the contributions from the singular orbifold strata when the
orbifold is four-dimensional.

We thank James Glazebrook, Efton Park and Igor Prokhorenkov for helpful
discussions. The authors would like to thank variously the Mathematisches
Forschungsinstitut Oberwolfach, the Erwin Schr\"{o}dinger International
Institute for Mathematical Physics (ESI), Vienna, the Department for
Mathematical Sciences (IMF) at Aarhus University, the Centre de Recerca Matem%
\`{a}tica (CRM), Barcelona, and the Department of Mathematics at TCU for
hospitality and support during the preparation of this work.

\section{Riemannian foliations and basic Dirac operators\label%
{PreliminariesSection}}

\subsection{Basic definitions\label{basicDefinitionsSection}}

\vspace{1pt}A foliation of codimension $q$ on a smooth manifold $M$ of
dimension $n$ is a natural generalization of a submersion. Any submersion $%
f:M\rightarrow N$ with fiber dimension $p$ induces locally, on an open set $%
U\subset M$, a diffeomorphism $\phi :U\rightarrow \mathbb{R}^{q}\times 
\mathbb{R}^{p}\ni \left( y,x\right) $, where $p+q=n$. A \textbf{foliation} $%
\mathcal{F}$ is a (maximal) atlas $\left\{ \phi _{\alpha }:U_{\alpha
}\rightarrow \mathbb{R}^{q}\times \mathbb{R}^{p}\right\} $ of $M$ such that
the transition functions $\phi _{\alpha }\circ \phi _{\beta }^{-1}:\mathbb{R}%
^{q}\times \mathbb{R}^{p}\rightarrow \mathbb{R}^{q}\times \mathbb{R}^{p}$
preserve the fibers, i.e. they have the form%
\begin{equation*}
\phi _{\alpha }\circ \phi _{\beta }^{-1}\left( y,x\right) =\left( \tau
_{\alpha \beta }\left( y\right) ,\psi _{\alpha \beta }\left( x,y\right)
\right) .
\end{equation*}%
This local description has many equivalent formulations, as expressed in the
famous Frobenius Theorem. Geometrically speaking, $M$ is partitioned into $p$%
-dimensional immersed submanifolds called the \textbf{leaves} of the
foliation; the tangent bundle $T\mathcal{F}$ to the leaves forms an
integrable subbundle of the tangent bundle $TM$.

In the case of a submersion, the normal bundle to $T\mathcal{F}$ is
naturally identified with the tangent bundle of the base, which then forms
the space of leaves. In general, such a description is not possible, since
the space of leaves defined by the obvious equivalence relation does not
form a manifold. Nevertheless, reasonable \textbf{transverse geometry} can
be expressed in terms of the normal bundle $Q:=TM\diagup T\mathcal{F}$ of
the foliation. We are particularly interested in the case of a Riemannian
foliation, which generalizes the concept of a Riemannian submersion. That
is, the horizontal metric $g_{h}$ on the total space of a Riemannian
submersion is the pullback of the metric on the base, such that in any chart 
$\phi $ as above, $g_{h}\left( \frac{\partial }{\partial y_{i}},\frac{%
\partial }{\partial y_{i}}\right) $ depends on the base coordinates $y$
alone. Another way to express this is that $\mathcal{L}_{X}g_{h}=0$ for all
vertical vector fields $X$, where $\mathcal{L}_{X}$ denotes the Lie
derivative. In the case of a foliation, the normal bundle $Q$ is framed by $%
\left\{ \frac{\partial }{\partial y_{j}}\right\} _{j=1}^{q}$, and this
foliation is called \textbf{Riemannian} if it is equipped with a metric $%
g_{Q}$ on $Q$ such that $\mathcal{L}_{X}g_{Q}=0$ for all $X\in C^{\infty
}\left( M,T\mathcal{F}\right) $ (see \cite{Mo}, \cite{T}). For example, a
Riemannian foliation with all leaves compact is a (generalized) Seifert
fibration; in this case the leaf space is an orbifold (\cite[Section 3.6]{Mo}%
). Or if a Lie group of isometries of a Riemannian manifold has orbits of
constant dimension, then the orbits form a Riemannian foliation. A large
class of examples of Riemannian foliations is produced by suspension (see
Section \ref{suspensionSection}).

Consider the exact sequence of vector bundles%
\begin{equation*}
0\rightarrow T\mathcal{F}\rightarrow TM\overset{\pi }{\rightarrow }%
Q\rightarrow 0.
\end{equation*}%
The \textbf{Bott connection} $\nabla ^{Q}$ on the normal bundle $Q$ is
defined as follows. If $s\in C^{\infty }\left( Q\right) $ and if $\pi \left(
Y\right) =s$, then $\nabla _{X}^{Q}s=\pi \left( \left[ X,Y\right] \right) $.
The basic sections of $Q$ are represented by \textbf{basic vector fields},
fields whose flows preserve the foliation. Alternately, a section $V$ of $Q$
is called a basic vector field if for every $X\in C^{\infty }\left( T%
\mathcal{F}\right) $, $\left[ X,V\right] \in C^{\infty }\left( T\mathcal{F}%
\right) $ (see \cite{KT2} or \cite{Mo}).

A differential form $\omega $ on $M$ is \textbf{basic} if locally it is a
pullback of a form on the base. Equivalently, $\omega $ is basic if for
every vector field $X$ tangent to the leaves, $i_{X}\omega =0$ and $%
i_{X}(d\omega )=0$, where $i_{X}$ denotes interior product with $X$. If we
extend the Bott connection to a connection $\nabla ^{\Lambda ^{\ast }Q^{\ast
}}$ on $\Lambda ^{\ast }Q^{\ast }$, a section $\omega $ of $\Lambda ^{\ast
}Q^{\ast }$ is basic if and only if $\nabla _{X}^{\Lambda ^{\ast }Q^{\ast
}}\omega =0$ for all $X$ tangent to $\mathcal{F}$. The exterior derivative
of a basic form is again basic, so the basic forms are a subcomplex $\Omega
^{\ast }\left( M,\mathcal{F}\right) $ of the de Rham complex $\Omega ^{\ast
}\left( M\right) $. The cohomology of this subcomplex is the \textbf{basic
cohomology} $H^{\ast }\left( M,\mathcal{F}\right) $.

\subsection{Foliated vector bundles}

\label{BasicConnectionsSection}We now review some standard definitions (see 
\cite{KT2} and \cite{Mo}). Let $G$ be a compact Lie group. With notation as
above, we say that a principal $G$--bundle $P\rightarrow \left( M,\mathcal{F}%
\right) $ is a \textbf{foliated principal bundle} if it is equipped with a
foliation $\mathcal{F}_{P}$ (the \textbf{lifted foliation}) such that the
distribution $T\mathcal{F}_{P}$ is invariant under the right action of $G$,
is transversal to the tangent space to the fiber, and projects to $T\mathcal{%
F}$. A connection $\omega $ on $P$ is called \textbf{adapted to }$\mathcal{F}%
_{P}$ if the associated horizontal distribution contains $T\mathcal{F}_{P}$.
An adapted connection $\omega $ is called a \textbf{basic connection} if it
is basic as a $\mathfrak{g}$-valued form on $\left( P,\mathcal{F}_{P}\right) 
$. Note that in \cite{KT2} the authors showed that basic connections always
exist on a foliated principal bundle over a Riemannian foliation.

Similarly, a vector bundle $E\rightarrow \left( M,\mathcal{F}\right) $ is 
\textbf{foliated} if $E$ is associated to a foliated principal bundle $%
P\rightarrow \left( M,\mathcal{F}\right) $ via a representation $\rho $ from 
$G$ to $O\left( k\right) $ or $U\left( k\right) $. Let $\Omega \left(
M,E\right) $ denote the space of forms on $M$ with coefficients in $E$. If a
connection form $\omega $ on $P$ is adapted, then we say that an associated
covariant derivative operator $\nabla ^{E}$ on $\Omega \left( M,E\right) $
is \emph{\ }\textbf{adapted} to the foliated bundle. We say that $\nabla
^{E} $ is a \textbf{basic} connection on $E$ if in addition the associated
curvature operator $\left( \nabla ^{E}\right) ^{2}$ satisfies $i_{X}\left(
\nabla ^{E}\right) ^{2}=0$ for every $X\in T\mathcal{F}$, where $i_{X}$
denotes the interior product with $X$. Note that $\nabla ^{E}$ is basic if $%
\omega $ is basic.

Let $C^{\infty }\left( E\right) $ denote the smooth sections of $E$, and let 
$\nabla ^{E}$ denote a basic connection on $E$. We say that a section $%
s:M\rightarrow E$ is a \textbf{basic section}\emph{\ }if and only if $\nabla
_{X}^{E}s=0$ for all $X\in T\mathcal{F}$. Let $\allowbreak C_{b}^{\infty
}\left( E\right) $ denote the space of basic sections of $E$. We will make
use of the fact that we can give $E$ a metric such that $\nabla ^{E}$ is a $%
\allowbreak $ metric basic connection.

The \textbf{holonomy groupoid} $G_{\mathcal{F}}$ of $\left( M,\mathcal{F}%
\right) $ (see \cite{W}) is the set of ordered triples $\left( x,y,\left[
\gamma \right] \right) $, where $x$ and $y$ are points of a leaf $L$ and $%
\left[ \gamma \right] $ is an equivalence class of piecewise smooth paths in 
$L$ starting at $x$ and ending at $y$; two such paths $\alpha $ and $\beta $
are equivalent if and only if $\beta ^{-1}\alpha $ has trivial holonomy.
Multiplication is defined by $\left( y,z,\left[ \alpha \right] \right) \cdot
\left( x,y,\left[ \beta \right] \right) =\left( x,z,\left[ \alpha \beta %
\right] \right) $, where $\alpha \beta $ refers to the curve starting at $x$
and ending at $z$ that is the concatenation of $\beta $ and $\alpha $.
Because $\left( M,\mathcal{F}\right) $ is Riemannian, $G_{\mathcal{F}}$ is
endowed with the structure of a smooth $\left( n+p\right) $--dimensional
manifold (see \cite{W}), where $n$ is the dimension of $M$ and $p$ is the
dimension of the foliation.

We say that a vector bundle $E\rightarrow M$ is $G_{\mathcal{F}}$\textbf{%
--equivariant }if there is an action of the holonomy groupoid on the fibers.
Explicitly, if the action of $g=\left( x,y,\left[ \gamma \right] \right) $
is denoted by $T_{g}$, then $T_{g}:E_{x}\rightarrow E_{y}$ is a linear
transformation. The transformations $\left\{ T_{g}\right\} $ satisfy $%
T_{g}T_{h}=T_{g\cdot h}$ for every $g,h\in G_{\mathcal{F}}$ for which $%
g\cdot h$ is defined, and we require that the map $g\longmapsto T_{g}$ is
smooth. In addition, we require that for any unit $u=\left( x,x,\left[
\alpha \right] \right) $ (that is, such that the holonomy of $\alpha $ is
trivial), $T_{u}:E_{x}\rightarrow E_{x}$ is the identity.

We say that a section $s:M\rightarrow E$ is \textbf{holonomy--invariant} if
for every $g=\left( x,y,\left[ \gamma \right] \right) \in G_{\mathcal{F}}$, $%
T_{g}s\left( x\right) =s\left( y\right) $.

\begin{remark}
Every $G_{\mathcal{F}}$--equivariant vector bundle $E\rightarrow \left( M,%
\mathcal{F}\right) $ is a foliated vector bundle, because the action of the
holonomy groupoid corresponds exactly to parallel translation along the
leaves. If the partial connection is extended to a basic connection on $E$,
we see that the notions of basic sections and holonomy--invariant sections
are the same.

On the other hand, suppose that $E\rightarrow \left( M,\mathcal{F}\right) $
is a foliated vector bundle that is equipped with a basic connection. It is
not necessarily true that parallel translation can be used to give $E$ the
structure of a $G_{\mathcal{F}}$--equivariant vector bundle. For example,
let $\alpha $ be an irrational multiple of $2\pi $, and consider $E=\left[
0,2\pi \right] \times \left[ 0,2\pi \right] \times \mathbb{C}\diagup \left(
0,\theta ,z\right) \sim \left( 2\pi ,\theta ,e^{i\alpha }z\right) $, which
is a Hermitian line bundle over the torus $S^{1}\times S^{1}$, using the
obvious product metric. The natural flat connection for $E$ over the torus
is a basic connection for the product foliation $\mathcal{F}=\left\{
L_{\theta }\right\} $, where $L_{\theta }=\left\{ \left( \phi ,\theta
\right) \,|\,\phi \in S^{1}\right\} $. However, one can check that parallel
translation cannot be used to make a well-defined action of $G_{\mathcal{F}}$
on the fibers.
\end{remark}

An example of a $G_{\mathcal{F}}$--equivariant vector bundle is the normal
bundle $Q$, given by the exact sequence of vector bundles%
\begin{equation*}
0\rightarrow T\mathcal{F}\rightarrow TM\overset{\pi }{\rightarrow }%
Q\rightarrow 0.
\end{equation*}%
The Bott connection $\nabla ^{Q}$ on $Q$ is a $\allowbreak $metric basic
connection. (Recall that if $s\in C^{\infty }\left( Q\right) $ and if $\pi
\left( Y\right) =s$, then $\nabla _{X}^{Q}s=\pi \left( \left[ X,Y\right]
\right) $.) The basic sections of $Q$ are represented by \textbf{basic
vector fields}, fields whose flows preserve the foliation. Alternately, a
section $V$ of $Q$ is called a basic vector field if for every $X\in
C^{\infty }\left( T\mathcal{F}\right) $, $\left[ X,V\right] \in C^{\infty
}\left( T\mathcal{F}\right) $ (see \cite{KT2} or \cite{Mo}).

\begin{lemma}
\label{connectionsection}Let $E\rightarrow \left( M,\mathcal{F}\right) $ be
a foliated vector bundle with a basic connection $\nabla ^{E}$. Let $V\in
C^{\infty }\left( Q\right) $ be a basic vector field, and let $%
s:M\rightarrow E$ be a basic section. Then $\nabla _{V}^{E}s$ is a basic
section of $E$.
\end{lemma}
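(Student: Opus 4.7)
The plan is to verify directly that $\nabla_{V}^{E}s$ is basic, i.e.\ that $\nabla_{X}^{E}(\nabla_{V}^{E}s)=0$ for every $X\in C^{\infty}(T\mathcal{F})$. First I would pick any smooth lift $\widetilde{V}\in C^{\infty}(TM)$ of $V$, so that $\pi(\widetilde{V})=V$, and define $\nabla_{V}^{E}s:=\nabla_{\widetilde{V}}^{E}s$. This is unambiguous on basic sections: if $\widetilde{V}'$ is another lift, then $\widetilde{V}-\widetilde{V}'\in C^{\infty}(T\mathcal{F})$ and $\nabla_{\widetilde{V}-\widetilde{V}'}^{E}s=0$ since $s$ is basic.

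The heart of the argument is the standard curvature/commutator identity
\[
\nabla_{X}^{E}\nabla_{\widetilde{V}}^{E}s \;=\; R^{E}(X,\widetilde{V})\,s \;+\; \nabla_{[X,\widetilde{V}]}^{E}s \;+\; \nabla_{\widetilde{V}}^{E}\nabla_{X}^{E}s,
\]
from which I would observe that all three terms on the right vanish, each for a distinct reason built into the hypotheses. The term $\nabla_{\widetilde{V}}^{E}\nabla_{X}^{E}s$ is zero because $s$ is basic, so $\nabla_{X}^{E}s=0$. The curvature term $R^{E}(X,\widetilde{V})\,s$ vanishes because $\nabla^{E}$ is a basic connection, which by definition means $i_{X}(\nabla^{E})^{2}=0$ for every $X$ tangent to $\mathcal{F}$. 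Finally, since $V$ is a basic vector field one has $[X,\widetilde{V}]\in C^{\infty}(T\mathcal{F})$ (equivalently, $\nabla_{X}^{Q}V=\pi([X,\widetilde{V}])=0$), and then $\nabla_{[X,\widetilde{V}]}^{E}s=0$, once again because $s$ is basic.

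Combining these three cancellations yields $\nabla_{X}^{E}(\nabla_{V}^{E}s)=0$ for all leafwise $X$, which is precisely the statement that $\nabla_{V}^{E}s$ is a basic section. There is no real obstacle here: the only point requiring care is the well-definedness step above, and the rest is a single three-term identity in which each summand is killed immediately by the respective defining properties of \emph{basic connection}, \emph{basic vector field}, and \emph{basic section}. No global or analytic input is needed.
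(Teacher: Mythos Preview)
Your proof is correct and follows essentially the same approach as the paper's: both expand $\nabla_{X}^{E}\nabla_{V}^{E}s$ via the curvature identity and kill each term using, respectively, that $\nabla^{E}$ is basic, that $V$ is basic (so $[X,V]\in C^{\infty}(T\mathcal{F})$), and that $s$ is basic. Your version is slightly more explicit in handling the lift $\widetilde{V}$ of $V\in C^{\infty}(Q)$ to $TM$, which the paper leaves implicit, but the argument is otherwise identical.
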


\begin{proof}
For any $X\in \allowbreak C^{\infty }\left( T\mathcal{F}\right) $, $\left[
X,V\right] \in \allowbreak C^{\infty }\left( T\mathcal{F}\right) $, so that $%
\nabla _{X}^{E}s=\nabla _{\left[ X,V\right] }^{E}s=0$. Thus, 
\begin{eqnarray*}
\nabla _{X}^{E}\nabla _{V}^{E}s &=&\left( \nabla _{X}^{E}\nabla
_{V}^{E}-\nabla _{V}^{E}\nabla _{X}^{E}-\nabla _{\left[ X,V\right]
}^{E}\right) s \\
&=&\allowbreak \left( \nabla ^{E}\right) ^{2}\left( X,V\right) s=0,
\end{eqnarray*}%
since $\nabla ^{E}$ is basic.
\end{proof}

Another example of a foliated vector bundle is the exterior bundle $%
\bigwedge Q^{\ast }$; the induced connection from the Bott connection on $Q$
is a $\allowbreak $metric basic connection. The set of basic sections of
this vector bundle is the set of basic forms $\Omega \left( M,\mathcal{F}%
\right) $, which is defined in the ordinary way in Section \ref%
{basicDefinitionsSection}. It is routine to check that these two definitions
of basic forms are equivalent.

\subsection{Basic Clifford bundles\label{BasicCliffordBundlesSection}}

Identifying $Q\ $with the normal bundle of the Riemannian foliation $\left(
M,\mathcal{F}\right) $, we form the bundle of Clifford algebras $\allowbreak 
\mathbb{C}\mathrm{l}\left( Q\right) =\mathrm{Cl}\left( Q\right) \otimes 
\mathbb{C}$ over $M$.

\begin{definition}
Let $E$ be a bundle of $\allowbreak \mathbb{C}\mathrm{l}\left( Q\right) $
--modules over a Riemannian foliation $\left( M,\mathcal{F}\right) $. Let $%
\nabla $ denote the Levi--Civita connection on $M,$ which restricts to a $%
\allowbreak $metric basic connection on $Q$. Let $h=\left( \cdot ,\cdot
\right) $ be a Hermitian metric on $E$, and let $\nabla ^{E}$ be a
connection on $E$. Let the action of an element $\xi \in \mathbb{C}\mathrm{l}%
\left( Q_{x}\right) $ on $v\in E_{x}$ be denoted by $c\left( \xi \right) v$.
We say that $\left( E,h,\nabla ^{E}\right) $ is a \textbf{basic Clifford
bundle} if

\begin{enumerate}
\item The bundle $E\rightarrow \left( M,\mathcal{F}\right) $ is foliated.

\item The connection $\nabla ^{E}$ is a $\allowbreak $metric basic
connection.

\item For every $\xi \in Q_{x}$, $c\left( \xi \right) $ is skew-adjoint on $%
E_{x}$.

\item For every $X\in C^{\infty }\left( TM\right) ,Y\in C^{\infty }\left(
Q\right) ,$ and $s\in C^{\infty }\left( E\right) ,$ 
\begin{equation*}
\nabla _{X}^{E}\left( c\left( Y\right) s\right) =c\left( \nabla _{X}Y\right)
s+c\left( Y\right) \nabla _{X}^{E}\left( s\right) .
\end{equation*}
\end{enumerate}
\end{definition}

\begin{lemma}
Let $\left( E,h,\nabla ^{E}\right) $ be a basic Clifford module over $\left(
M,\mathcal{F}\right) $. Let $V\in C^{\infty }\left( Q\right) $ be a basic
vector field, and let $s:M\rightarrow E$ be a basic section. Then $c\left(
V\right) s$ is a basic section of $E$.
\end{lemma}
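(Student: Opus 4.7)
The plan is to verify directly that $\nabla_X^E(c(V)s) = 0$ for every $X \in C^\infty(T\mathcal{F})$, which is the characterization of a basic section given earlier (via Lemma \ref{connectionsection} and the definition of basic section).

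First I would apply the Clifford compatibility property (item 4 in the definition of a basic Clifford bundle) with $Y = V$, yielding
\begin{equation*}
\nabla_X^E(c(V)s) = c(\nabla_X V)\, s + c(V)\, \nabla_X^E s
\end{equation*}
for any $X \in C^\infty(T\mathcal{F})$. The second term vanishes immediately because $s$ is assumed to be a basic section, so $\nabla_X^E s = 0$. The argument therefore reduces to showing that $\nabla_X V = 0$ as a section of $Q$ whenever $X$ is tangent to $\mathcal{F}$ and $V$ is a basic vector field.

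For this step I would invoke the hypothesis, stated in the definition, that the Levi-Civita connection $\nabla$ restricts on $Q$ to the metric basic connection — which along leaf directions is precisely the Bott connection. By the formula recalled just before the lemma, $\nabla_X^Q V = \pi([X,V])$ for $X \in T\mathcal{F}$, and the defining property of a basic vector field is exactly $[X,V] \in C^\infty(T\mathcal{F})$, so $\pi([X,V]) = 0$. Hence $\nabla_X V = 0$ in $Q$, and therefore $c(\nabla_X V) = 0$.

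Combining the two observations gives $\nabla_X^E(c(V)s) = 0$ for every $X \in C^\infty(T\mathcal{F})$, which is the claim. The only subtle point is matching the connection appearing in the Clifford Leibniz axiom with the Bott connection on leaf directions; this is guaranteed by the definition of a basic Clifford bundle, so no further work is needed. There is no real obstacle here — the argument is formally identical in structure to Lemma \ref{connectionsection}, with the Leibniz rule for $\nabla^E$ replaced by its Clifford-compatible version.
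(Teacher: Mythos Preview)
Your proof is correct and follows exactly the same approach as the paper's: apply the Clifford compatibility axiom to obtain $\nabla_X^E(c(V)s) = c(\nabla_X V)\,s + c(V)\,\nabla_X^E s$, and then observe that both terms vanish because $V$ and $s$ are basic. You are somewhat more explicit than the paper in justifying $\nabla_X V = 0$ via the Bott connection formula, but the argument is otherwise identical.
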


\begin{proof}
If $\nabla _{X}^{E}s=0$ and $\nabla _{X}V=0$ for every $X\in \allowbreak
C^{\infty }\left( T\mathcal{F}\right) $, then 
\begin{equation*}
\nabla _{X}^{E}\left( c\left( V\right) s\right) =c\left( \nabla _{X}V\right)
\ s+c\left( V\right) \nabla _{X}^{E}\left( s\right) =0.
\end{equation*}
\end{proof}

\subsection{Basic Dirac operators\label{BasicDirac-TypeOperatorsSection}}

\begin{definition}
Let $\left( E,\left( \cdot ,\cdot \right) ,\nabla ^{E}\right) $ be a basic
Clifford bundle. The \textbf{transversal Dirac operator }$D_{\mathrm{tr}%
}^{E} $ is the composition of the maps 
\begin{equation*}
C^{\infty }\left( E\right) \overset{\left( \nabla ^{E}\right) ^{\mathrm{%
\mathrm{tr}}}}{\rightarrow }C^{\infty }\left( Q^{\ast }\otimes E\right) 
\overset{\cong }{\rightarrow }C^{\infty }\left( Q\otimes E\right) \overset{c}%
{\rightarrow }C^{\infty }\left( E\right) ,
\end{equation*}%
where the operator $\left( \nabla ^{E}\right) ^{\mathrm{tr}}$ is the obvious
projection of $\nabla ^{E}:C^{\infty }\left( E\right) \rightarrow C^{\infty
}\left( T^{\ast }M\otimes E\right) $ and the ismorphism $\cong $ is induced
via the holonomy--invariant metric on $Q$.
\end{definition}

If $\left\{ e_{1},...,e_{q}\right\} $ is an orthonormal basis of $Q$, we
have that 
\begin{equation*}
D_{\mathrm{tr}}^{E}=\sum_{j=1}^{q}c\left( e_{j}\right) \nabla _{e_{j}}^{E}.
\end{equation*}%
Let $p:T^{\ast }M\rightarrow M$ be the projection. The restriction of the
principal symbol $\sigma \left( D_{\mathrm{tr}}^{E}\right) :T^{\ast
}M\rightarrow \mathrm{End}\left( p^{\ast }E\right) $ to $Q^{\ast }$ is
denoted $\sigma ^{\mathrm{tr}}\left( D_{\mathrm{tr}}^{E}\right) $, and it is
given by 
\begin{equation*}
\sigma ^{\mathrm{tr}}\left( D_{\mathrm{tr}}^{E}\right) \left( \xi \right)
=c\left( \xi ^{\#}\right) .
\end{equation*}%
Since this map is invertible for $\xi \in Q^{\ast }\setminus 0$ , we say
that $D_{\mathrm{tr}}^{E}$ is \textbf{transversally elliptic}.

\begin{lemma}
The operator $D_{\mathrm{tr}}^{E}$ restricts to a map on the subspace $%
\allowbreak C_{b}^{\infty }\left( E\right) $.
\end{lemma}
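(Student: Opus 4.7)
The plan is to verify that $D_{\mathrm{tr}}^{E}s$ is basic whenever $s$ is basic, by reducing to the two preceding lemmas. The natural approach is pointwise and local: work in a neighborhood of a given point, express $D_{\mathrm{tr}}^{E}$ in a well-chosen orthonormal frame, and observe that each term of the resulting sum already lies in $C_{b}^{\infty}(E)$.

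Concretely, I would first establish that around every point of $M$ there exists a local orthonormal frame $\{e_{1},\ldots,e_{q}\}$ of $Q$ consisting of basic vector fields. In a foliation chart with transverse coordinates $(y_{1},\ldots,y_{q})$ the projections $\pi(\partial/\partial y_{j})$ are basic sections of $Q$, and the holonomy-invariance of $g_{Q}$ (i.e.\ $\mathcal{L}_{X}g_{Q}=0$ for $X\in C^{\infty}(T\mathcal{F})$) means that the Gram--Schmidt orthonormalization of this frame produces sections whose inner products with each other are constant along leaves; hence the resulting orthonormal frame still consists of basic vector fields. With such a frame in hand, the coordinate expression
\begin{equation*}
D_{\mathrm{tr}}^{E}s \;=\; \sum_{j=1}^{q} c(e_{j})\,\nabla_{e_{j}}^{E}s
\end{equation*}
is valid on the neighborhood.

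Now apply Lemma \ref{connectionsection}: since $e_{j}$ is a basic vector field and $s$ is a basic section, each $\nabla_{e_{j}}^{E}s$ is again a basic section of $E$. Then apply the preceding lemma on basic Clifford modules: since $e_{j}$ is a basic vector field and $\nabla_{e_{j}}^{E}s$ is a basic section, $c(e_{j})\nabla_{e_{j}}^{E}s$ is a basic section of $E$. Summing over $j$ shows that $D_{\mathrm{tr}}^{E}s$ is basic on the chart. Because being basic is a local condition (equivalently, $\nabla_{X}^{E}(D_{\mathrm{tr}}^{E}s)=0$ for all $X\in T\mathcal{F}$ can be checked pointwise), this suffices to conclude that $D_{\mathrm{tr}}^{E}s\in C_{b}^{\infty}(E)$.

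The only nontrivial ingredient is the existence of a local orthonormal \emph{basic} frame of $Q$, which is the one place the Riemannian hypothesis genuinely enters. Once that is granted, the rest of the argument is a direct bookkeeping application of the two earlier lemmas, and no coordinate independence check is needed since the invariant operator $D_{\mathrm{tr}}^{E}$ is defined globally and we have only verified a local property of its output.
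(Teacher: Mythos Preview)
Your argument is correct and follows essentially the same route as the paper: choose a local orthonormal frame of $Q$ consisting of basic vector fields and reduce to the two preceding lemmas. The paper's proof is slightly terser---it computes $\nabla_{X}^{E}(D_{\mathrm{tr}}^{E}s)$ directly and invokes the Clifford compatibility condition inline rather than citing the Clifford-module lemma separately---but the substance is identical, and your extra justification for the existence of the basic orthonormal frame is a welcome addition.
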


\begin{proof}
Suppose that $s:M\rightarrow E$ is a basic section, so that $\nabla
_{X}^{E}s=0$ for every $X\in C^{\infty }\left( T\mathcal{F}\right) $. Near a
point $x$ of $M,$ choose an orthonormal frame field $\left(
e_{1},...,e_{q}\right) $ of $Q$ consisting of basic fields. Then 
\begin{eqnarray*}
\nabla _{X}^{E}\left( D_{\mathrm{tr}}^{E}\left( s\right) \right)
&=&\sum_{j=1}^{q}\nabla _{X}^{E}\left( c\left( e_{j}\right) \nabla
_{e_{j}}^{E}s\right) \\
&=&\sum_{j=1}^{q}c\left( e_{j}\right) \nabla _{X}^{E}\left( \nabla
_{e_{j}}^{E}s\right) ,
\end{eqnarray*}
since each $e_{j}$ is basic, and the result is zero by Lemma \ref%
{connectionsection}.
\end{proof}

We now calculate the formal adjoint of $D_{\mathrm{tr}}^{E}$ on $%
C_{b}^{\infty }\left( E\right) $. Letting $\left( s_{1},s_{2}\right) $
denote the pointwise inner product of sections of $E$ and choosing an
orthonormal frame field $\left( e_{1},...,e_{q}\right) $ of $Q$ consisting
of basic fields, we have that 
\begin{eqnarray*}
\left( D_{\mathrm{tr}}^{E}s_{1},s_{2}\right) -\left( s_{1},D_{\mathrm{tr}%
}^{E}s_{2}\right) &=&\sum_{j=1}^{q}\left( c\left( e_{j}\right) \nabla
_{e_{j}}^{E}s_{1},s_{2}\right) -\left( s_{1},c\left( e_{j}\right) \nabla
_{e_{j}}^{E}s_{2}\right) \\
&=&\sum_{j=1}^{q}\left( c\left( e_{j}\right) \nabla
_{e_{j}}^{E}s_{1},s_{2}\right) +\left( c\left( e_{j}\right) s_{1},\nabla
_{e_{j}}^{E}s_{2}\right) \\
&=&\sum_{j=1}^{q}\left( \nabla _{e_{j}}^{E}\left( c\left( e_{j}\right)
s_{1}\right) ,s_{2}\right) -\left( c\left( \nabla _{e_{j}}^{\bot
}e_{j}\right) s_{1},s_{2}\right) +\left( c\left( e_{j}\right) s_{1},\nabla
_{e_{j}}^{E}s_{2}\right) \\
&=&\left( \sum_{j=1}^{q}\nabla _{e_{j}}^{\bot }\left( c\left( e_{j}\right)
s_{1},s_{2}\right) \right) -\left( c\left( \sum_{j=1}^{q}\nabla
_{e_{j}}^{\bot }e_{j}\right) s_{1},s_{2}\right) \\
&=&-\sum_{j=1}^{q}\nabla _{e_{j}}^{\bot }i_{e_{j}}\omega +\omega \left(
\sum_{j=1}^{q}\nabla _{e_{j}}^{\bot }e_{j}\right) ,
\end{eqnarray*}%
where $\omega $ is the basic form defined by $\omega \left( X\right)
=-\left( c\left( X\right) s_{1},s_{2}\right) $ for $X\in C^{\infty }\left(
Q\right) $. Continuing, 
\begin{eqnarray*}
\left( D_{\mathrm{tr}}^{E}s_{1},s_{2}\right) -\left( s_{1},D_{\mathrm{tr}%
}^{E}s_{2}\right) &=&-\sum_{j=1}^{q}\nabla _{e_{j}}^{\bot }i_{e_{j}}\omega
+\omega \left( \sum_{j=1}^{q}\nabla _{e_{j}}^{\bot }e_{j}\right) \\
&=&-\sum_{j=1}^{q}\left( i_{e_{j}}\nabla _{e_{j}}^{\bot }+i_{\nabla
_{e_{j}}^{\bot }e_{j}}\right) \omega +\omega \left( \sum_{j=1}^{q}\nabla
_{e_{j}}^{\bot }e_{j}\right) \\
&=&-\sum_{j=1}^{q}i_{e_{j}}\nabla _{e_{j}}^{\bot }\omega
\end{eqnarray*}

Note we have been using the normal Levi-Civita connection $\nabla ^{\bot }$.
If we (locally) complete the normal frame field to an orthonormal frame
field $\left\{ e_{1},...,e_{n}\right\} $ for $TM$ near $x\in M$. Letting $%
\nabla ^{M}=$ $\nabla ^{\bot }+\nabla ^{\mathrm{\tan }}$ be the Levi-Civita
connection on $\Omega \left( M\right) $, the divergence of a general basic
one-form $\beta $ is 
\begin{eqnarray*}
\delta \beta &=&-\sum_{j=1}^{n}i_{e_{j}}\nabla _{e_{j}}^{M}\beta \\
&=&-\sum_{j=1}^{n}i_{e_{j}}\nabla _{e_{j}}^{\bot }\beta
+-\sum_{j=1}^{n}i_{e_{j}}\nabla _{e_{j}}^{\mathrm{\tan }}\beta \\
&=&-\sum_{j=1}^{q}i_{e_{j}}\nabla _{e_{j}}^{\bot }\beta
+-\sum_{j>q}^{n}i_{e_{j}}\nabla _{e_{j}}^{\mathrm{\tan }}\beta
\end{eqnarray*}%
Letting $\beta =\sum_{k=1}^{q}\beta _{k}e_{k}^{\ast },$ then each $\beta
_{k} $ is basic and 
\begin{eqnarray*}
\delta \beta &=&-\sum_{j=1}^{q}i_{e_{j}}\nabla _{e_{j}}^{\bot }\beta
-\sum_{j>q}^{n}i_{e_{j}}\nabla _{e_{j}}^{\mathrm{\tan }}\left(
\sum_{k=1}^{q}\beta _{k}e_{k}^{\ast }\right) \\
&=&-\sum_{j=1}^{q}i_{e_{j}}\nabla _{e_{j}}^{\bot }\beta
-\sum_{k=1}^{q}\sum_{j>q}^{n}\beta _{k}i_{e_{j}}\nabla _{e_{j}}^{\mathrm{%
\tan }}\left( e_{k}^{\ast }\right) \\
&=&-\sum_{j=1}^{q}i_{e_{j}}\nabla _{e_{j}}^{\bot }\beta
-\sum_{k=1}^{q}\sum_{j>q}\beta _{k}i_{e_{j}}\left( \sum_{m>q}\left( \nabla
_{e_{j}}^{M}\left( e_{k}^{\ast }\right) ,e_{m}^{\ast }\right) e_{m}^{\ast
}\right) \\
&=&-\sum_{j=1}^{q}i_{e_{j}}\nabla _{e_{j}}^{\bot }\beta
+\sum_{k=1}^{q}\sum_{j>q}\beta _{k}i_{e_{j}}\left( \sum_{m>q}\left( \nabla
_{e_{j}}^{M}\left( e_{m}^{\ast }\right) ,e_{k}^{\ast }\right) e_{m}^{\ast
}\right) \\
&=&-\sum_{j=1}^{q}i_{e_{j}}\nabla _{e_{j}}^{\bot }\beta
+\sum_{k=1}^{q}\sum_{j>q}\beta _{k}\left( \sum_{m>q}\left( \nabla
_{e_{j}}^{M}\left( e_{j}^{\ast }\right) ,e_{k}^{\ast }\right) \right) \\
&=&-\sum_{j=1}^{q}i_{e_{j}}\nabla _{e_{j}}^{\bot }\beta +i_{H}\beta ,
\end{eqnarray*}%
where $H$ is the mean curvature vector field of the foliation. Thus, for
every basic one-form $\beta $, 
\begin{equation*}
-\sum_{j=1}^{q}i_{e_{j}}\nabla _{e_{j}}^{\bot }\beta =\delta \beta
-i_{H}\beta .
\end{equation*}%
Applying this result to the form $\omega $ defined above, we have 
\begin{eqnarray*}
\left( D_{\mathrm{tr}}^{E}s_{1},s_{2}\right) -\left( s_{1},D_{\mathrm{tr}%
}^{E}s_{2}\right) &=&-\sum_{j=1}^{q}i_{e_{j}}\nabla _{e_{j}}^{\bot }\omega \\
&=&\delta \omega -i_{H}\omega \\
&=&\delta \omega +\left( c\left( H\right) s_{1},s_{2}\right) \\
&=&\delta \omega -\left( s_{1},c\left( H\right) s_{2}\right)
\end{eqnarray*}%
Next, letting $P:L^{2}\left( \Omega \left( M\right) \right) \rightarrow
L^{2}\left( \Omega _{b}\left( M,\mathcal{F}\right) \right) $ denote the
orthogonal projection onto the closure of basic forms in $L^{2}\left( \Omega
\left( M\right) \right) $, we observe that $\delta _{b}=P\delta $ is the
adjoint of $d_{b}$, the restriction of the exterior derivative to basic
forms. Using the results of \cite{PaRi}, $P$ maps smooth forms to smooth
basic forms, and the projection of the smooth function $\left( s_{1},c\left(
H\right) s_{2}\right) $ is simply $\left( s_{1},c\left( H_{b}\right)
s_{2}\right) ,$ where $H_{b}$ is the vector field $P\left( H^{\flat }\right)
^{\sharp }$, the basic projection of the mean curvature vector field. If we
had originally chosen our bundle-like metric to have basic mean curvature,
which is always possible by \cite{D}, then $H_{b}=H$. In any case, the right
hand side of the formula above is a basic function, so that 
\begin{equation*}
\left( D_{\mathrm{tr}}^{E}s_{1},s_{2}\right) -\left( s_{1},D_{\mathrm{tr}%
}^{E}s_{2}\right) =\delta _{b}\omega -\left( s_{1},c\left( H_{b}\right)
s_{2}\right) .
\end{equation*}%
We conclude:

\begin{proposition}
The formal adjoint of the transversal Dirac operator is $\left( D_{\mathrm{tr%
}}^{E}\right) ^{\ast }=D_{\mathrm{tr}}^{E}-c\left( H_{b}\right) $.
\end{proposition}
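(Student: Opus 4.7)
The bulk of the calculation has already been carried out in the text preceding the proposition; what remains is to pass from the pointwise identity
\[
(D_{\mathrm{tr}}^{E}s_{1},s_{2})-(s_{1},D_{\mathrm{tr}}^{E}s_{2})=\delta_{b}\omega-(s_{1},c(H_{b})s_{2})
\]
to the statement about formal adjoints in the $L^{2}$ inner product on $C_{b}^{\infty}(E)$. My plan is to integrate this identity against the Riemannian volume measure $dV$ on $M$ and observe that the divergence term drops out.

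For the divergence term, since $\delta_{b}=P\delta$ is by construction the formal adjoint of $d_{b}$ on basic forms (as cited from \cite{PaRi}), and since the constant function $1$ is basic with $d_{b}1=0$, we have
\[
\int_{M}\delta_{b}\omega\,dV=\langle\delta_{b}\omega,1\rangle_{L^{2}}=\langle\omega,d_{b}1\rangle_{L^{2}}=0.
\]
Here the use of the smoothing property of $P$ from \cite{PaRi} ensures $\delta_{b}\omega$ is smooth and basic, so the pairing is legitimate. Consequently, integrating the pointwise identity yields
\[
\langle D_{\mathrm{tr}}^{E}s_{1},s_{2}\rangle_{L^{2}}-\langle s_{1},D_{\mathrm{tr}}^{E}s_{2}\rangle_{L^{2}}=-\langle s_{1},c(H_{b})s_{2}\rangle_{L^{2}},
\]
which rearranges to $\langle D_{\mathrm{tr}}^{E}s_{1},s_{2}\rangle=\langle s_{1},(D_{\mathrm{tr}}^{E}-c(H_{b}))s_{2}\rangle$, the claimed formula.

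Finally, I would verify that $D_{\mathrm{tr}}^{E}-c(H_{b})$ actually restricts to an operator on $C_{b}^{\infty}(E)$: the first summand does so by the preceding lemma, and since $H_{b}$ is a basic vector field by construction, Clifford multiplication by $H_{b}$ preserves basic sections by the earlier lemma on $c(V)s$ with $V$ basic. The only genuine subtlety, and hence the main point requiring care, is the justification that the basic-projection identity $\delta_{b}=P\delta$ really does give the adjoint of $d_{b}$ for the $L^{2}$ inner product we are using on $C_{b}^{\infty}(E)$; once that is granted (via \cite{PaRi}), the proof collapses to the one-line integration above.
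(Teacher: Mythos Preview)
Your proposal is correct and follows exactly the approach of the paper: the paper carries out the pointwise computation leading to $(D_{\mathrm{tr}}^{E}s_{1},s_{2})-(s_{1},D_{\mathrm{tr}}^{E}s_{2})=\delta_{b}\omega-(s_{1},c(H_{b})s_{2})$ and then simply writes ``We conclude:'' before stating the proposition, leaving the integration step implicit. You have spelled out that step carefully, including the vanishing of $\int_M \delta_b\omega\,dV$ via the adjointness of $\delta_b$ and $d_b$, and you added the check that $D_{\mathrm{tr}}^{E}-c(H_{b})$ preserves basic sections; both are appropriate and in the spirit of the paper's argument.
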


\begin{definition}
The \textbf{basic Dirac operator} associated to a basic Clifford module $%
\left( E,\left( \cdot ,\cdot \right) ,\nabla ^{E}\right) $ over a Riemannian
foliation $\left( M,\mathcal{F}\right) $ with bundle-like metric is%
\begin{equation*}
D_{b}^{E}=D_{\mathrm{tr}}^{E}-\frac{1}{2}c\left( H_{b}\right) :C_{b}^{\infty
}\left( E\right) \rightarrow C_{b}^{\infty }\left( E\right) .
\end{equation*}
\end{definition}

\begin{remark}
Note that the formal adjoint of $D_{b}^{E}$ is $\left( D_{\mathrm{tr}%
}^{E}\right) ^{\ast }+\frac{1}{2}c\left( H_{b}\right) =D_{\mathrm{tr}}^{E}-%
\frac{1}{2}c\left( H_{b}\right) =D_{b}^{E}$. Thus, $D_{b}^{E}$ is formally
sel-adjoint. In \cite{HabRi}, the researchers showed that the eigenvalues of 
$D_{b}^{E}$ are independent of the choice of the bundle-like metric that
restricts to the given transverse metric of the Riemannian foliation.
\end{remark}

Let $\left( E,\left( \cdot ,\cdot \right) ,\nabla ^{E}\right) $ be a basic
Clifford bundle over the Riemannian foliation $\left( M,\mathcal{F}\right) $
, and let $D_{b}^{E}:\allowbreak C_{b}^{\infty }\left( E\right) \rightarrow
\allowbreak C_{b}^{\infty }\left( E\right) $ be the associated basic Dirac
operator. Assume that $E=E^{+}\oplus E^{-}$, with $D_{b}^{E\pm }:\allowbreak
C_{b}^{\infty }\left( E^{\pm }\right) \rightarrow \allowbreak C_{b}^{\infty
}\left( E^{\mp }\right) $. Let $\left( D_{b}^{E\pm }\right) ^{\ast
}:L_{b}^{2}\left( E^{\mp }\right) \rightarrow L_{b}^{2}\left( E^{\pm
}\right) $ denote the adjoint of $D_{b}^{E\pm }$.

\begin{definition}
The \textbf{analytic basic index }of $D_{b}^{E}$ is 
\begin{equation*}
\mathrm{ind}_{b}\left( D_{b}^{E}\right) =\dim \ker \left.
D_{b}^{E+}\right\vert _{L^{2}\left( \allowbreak C_{b}^{\infty }\left(
E^{+}\right) \right) }-\dim \ker \left. \left( D_{b}^{E-}\right) ^{\ast
}\right\vert _{L^{2}\left( \allowbreak C_{b}^{\infty }\left( E^{-}\right)
\right) }.
\end{equation*}
\end{definition}

\begin{remark}
\label{finitedim} At this point, it is not clear that these dimensions are
finite. We demonstrate this fact inside this section.
\end{remark}

\subsection{Examples\label{diracex}}

The standard examples of ordinary Dirac operators are the spin$^{c}$ Dirac
operator, the de Rham operator, the signature operator, and the Dolbeault
operator. Transversally elliptic analogues of these operators and their
corresponding basic indices are typical examples of basic Dirac operators.

Suppose that the normal bundle $Q=TM\diagup T\mathcal{F}\rightarrow M$ of
the Riemannian foliation $\left( M,\mathcal{F}\right) $ is spin$^{c}$. Then
there exists a foliated Hermitian basic Clifford bundle $\left( S,\left(
\cdot ,\cdot \right) ,\nabla ^{S}\right) $ over $M$ such that for all $x\in
M $, $S_{x}$ is isomorphic to the standard spinor representation of the
Clifford algebra $\mathbb{C}\mathrm{l}\left( Q_{x}\right) $ (see \cite{LM}).
The associated basic Dirac operator $\not{\partial}_{b}^{S}$ is called a 
\textbf{basic spin}$^{c}$\textbf{\ Dirac operator}. The meaning of the
integer $\mathrm{ind}_{b}\left( \not{\partial}_{b}^{S}\right) $ is not
clear, but it is an obstruction to some transverse curvature and other
geometric conditions (see \cite{GlK}, \cite{Ju}, \cite{Kord}, \cite{HabRi}).

Suppose $\mathcal{F}$ has codimension $q$. The\textbf{\ basic Euler
characteristic} is defined as 
\begin{equation*}
\chi \left( M,\mathcal{F}\right) =\sum_{k=0}^{q}(-1)^{k}\dim H^{k}\left( M,%
\mathcal{F}\right) ,
\end{equation*}%
provided that all of the basic cohomology groups $H^{k}\left( M,\mathcal{F}%
\right) $ are finite-dimensional. Although $H^{0}\left( M,\mathcal{F}\right) 
$ and $H^{1}\left( M,\mathcal{F}\right) $ are always finite-dimensional,
there are foliations for which higher basic cohomology groups can be
infinite-dimensional. For example, in \cite{Gh}, the author gives an example
of a flow on a 3-manifold for which $H^{2}\left( M,\mathcal{F}\right) $ is
infinite-dimensional. There are various proofs that the basic cohomology of
a Riemannian foliation on a closed manifold is finite-dimensional; see for
example \cite{EKHS} for the original proof using spectral sequence
techniques or \cite{KT} and \cite{PaRi} for proofs using a basic version of
the Hodge theorem.

It is possible to express the basic Euler characteristic as the index of an
operator. Let $d_{b}$ denote the restriction of the exterior derivative $d$
to basic forms over the Riemannian foliation $\left( M,\mathcal{F}\right) $
with bundle-like metric, and let $\delta _{b}$ be the adjoint of $d_{b}$. It
can be shown that $\delta _{b}$ is the restriction of the operator $P\delta $
to basic forms, where $\delta $ is the adjoint of $d$ on all forms and $P$
is the $L^{2}$ -orthogonal projection of the space of forms onto the space
of basic forms. For general foliations, this is not a smooth operator, but
in the case of Riemannian foliations, $P$ maps smooth forms to smooth basic
forms (see \cite{PaRi}), and $P\delta $ is a differential operator. In
perfect analogy to the fact that the index of the de Rham operator 
\begin{equation*}
d+\delta :\Omega ^{\mathrm{\mathrm{even}}}\left( M\right) \rightarrow \Omega
^{\mathrm{\mathrm{odd}}}\left( M\right)
\end{equation*}%
is the ordinary Euler characteristic, it can be shown that the basic index
of the differential operator $d+P\delta $, that is the index of 
\begin{equation*}
D_{b}^{\prime }=d_{b}+\delta _{b}:\Omega _{b}^{\mathrm{\mathrm{even}}}\left(
M,\mathcal{F}\right) \rightarrow \Omega _{b}^{\mathrm{\mathrm{odd}}}\left( M,%
\mathcal{F}\right) ,
\end{equation*}%
is the basic Euler characteristic. The same proof works; this time we must
use the basic version of the Hodge theorem (see \cite{EKHS}, \cite{KT}, and 
\cite{PaRi}). Note that the equality of the basic index remains valid for
nonRiemannian foliations; however, the Fredholm property fails in many
circumstances. It is interesting to note that the operator $d_{b}+\delta
_{b} $ fails to be transversally elliptic in some examples of nonRiemannian
foliations.

The principal symbol of $D_{b}^{\prime }$ is as follows. We define the
Clifford multiplication of $\mathbb{C}\mathrm{l}\left( Q\right) $ on the
bundle $\wedge ^{\ast }Q^{\ast }$ by the action%
\begin{equation*}
v\cdot ~=\left( v^{\flat }\wedge \right) -\left( v\lrcorner \right)
\end{equation*}%
for any vector $v\in N\mathcal{F}\cong Q$. With the standard connection and
inner product defined by the metric on $Q$, the bundle $\wedge ^{\ast
}Q^{\ast }$ is a basic Clifford bundle. The corresponding basic Dirac
operator, called the \textbf{basic de Rham operator} on basic forms,
satisfies 
\begin{equation*}
D_{b}=d+\delta _{b}-\frac{1}{2}\left( \kappa _{b}\wedge +\kappa
_{b}\lrcorner \right) =D_{b}^{\prime }-\frac{1}{2}\left( \kappa _{b}\wedge
+\kappa _{b}\lrcorner \right) .
\end{equation*}%
The kernel of this operator represents the twisted basic cohomology classes,
the cohomology of basic forms induced by the differential $\widetilde{d}$
defined as%
\begin{equation*}
\widetilde{d}=d-\frac{1}{2}\kappa _{b}\wedge ~.
\end{equation*}%
See \cite{HabRi2} for an extended discussion of twisted basic cohomology,
the basic de Rham operator, and its properties. We have $\mathrm{ind}%
_{b}\left( D_{b}^{\prime }\right) =\mathrm{ind}_{b}\left( D_{b}\right) $
because they differ by a zeroth order operator (see the Fredholm properties
of the basic index in Section \ref{BasicIndexAndDiagramSection} below), and
thus%
\begin{equation*}
\mathrm{ind}_{b}\left( D_{b}^{\prime }\right) =\mathrm{ind}_{b}\left(
D_{b}\right) =\chi \left( M,\mathcal{F}\right) ,
\end{equation*}%
the basic Euler characteristic of the complex of basic forms.

\section{Fredholm properties and equivariant theory\label%
{EquivariantTheorySection}}

\subsection{Molino theory and properties of the basic index\label%
{BasicIndexAndDiagramSection}}

Let $\widehat{M}\overset{p}{\longrightarrow }M$ denote the principal bundle
of ordered pairs of frames $\left( \phi _{x},\psi _{x}\right) $ over $x\in M$%
, where $\phi _{x}:\mathbb{R}^{q}\rightarrow N_{x}\mathcal{F}$ is an
isometry and $\psi :\mathbb{C}^{k}\rightarrow E_{x}$ is a complex isometry.
This is a principal $G$--bundle, where $G\cong O\left( q\right) \times
U\left( k\right) $, and it comes equipped with a natural metric connection $%
\nabla $ associated to the Riemannian and Hermitian structures of $%
E\rightarrow M$. The foliated vector bundles $Q\rightarrow \left( M,\mathcal{%
F}\right) $ and $E\rightarrow \left( M,\mathcal{F}\right) $ naturally give $%
\widehat{M}$ the structure of a foliated principal bundle with lifted
foliation $\widehat{\mathcal{F}}$. Transferring the normalized, biinvariant
metric on $G$ to the fibers and using the connection $\nabla $, we define a
natural metric $\left( \cdot ,\cdot \right) _{\widehat{M}}$ on $\widehat{M}$
that is locally a product. The connection $\nabla ^{E}$ pulls back to a
basic connection $\nabla ^{p^{\ast }E}$ on $p^{\ast }E$; the horizontal
subbundle $\mathcal{H}p^{\ast }E$ of $Tp^{\ast }E$ is the inverse image of
the horizontal subbundle $\mathcal{H}E\subset TE$ under the natural map $%
Tp^{\ast }E\rightarrow TE$. It is clear that the metric is bundle-like for
the lifted foliation $\widehat{\mathcal{F}}$.

Observe that the foliation $\widehat{\mathcal{F}}$ is transversally
parallelizable, meaning that the normal bundle of the lifted foliation is
parallelizable by $\widehat{\mathcal{F}}$-basic vector fields. To see this,
we use a modification of the standard construction of the parallelism of the
frame bundle of a manifold (see \cite[p.82]{Mo} for this construction in the
case where the principle bundle is the bundle of transverse orthonormal
frames). Let $G=O\left( q\right) \times U\left( k\right) $, and let $\theta $
denote the $\mathbb{R}^{q}$--valued solder form of $\widehat{M}\rightarrow M$%
. Given the pair of frames $z=\left( \phi ,\psi \right) $ where $\phi :%
\mathbb{R}^{q}\rightarrow N_{p\left( z\right) }\mathcal{F}$ and $\psi :%
\mathbb{C}^{k}\rightarrow E_{p\left( z\right) }$ and given $X_{z}\in T_{z}%
\widehat{M}$, we define $\theta \left( X_{z}\right) =\phi ^{-1}\left( \pi
^{\bot }p_{\ast }X_{z}\right) $, where $\pi ^{\bot }:T_{p\left( z\right)
}M\rightarrow N_{p\left( z\right) }\mathcal{F}$ is the orthogonal
projection. Let $\omega $ denote the $\mathfrak{o}\left( q\right) \oplus 
\mathfrak{u}\left( k\right) $--valued connection one-form. Let $\left\{
e_{1},...,e_{q}\right\} $ be the standard orthonormal basis of $\mathbb{\ R}%
^{q}$ , and let $\left\{ E_{j}\right\} _{j=1}^{\dim G}$ denote a fixed
orthonormal basis of $\mathfrak{o}\left( q\right) \oplus \mathfrak{u}\left(
k\right) $. We uniquely define the vector fields $V_{1},...,V_{q}$, $%
\overline{E_{1}},...,\overline{E_{\dim G}}$ on $\widehat{M}$ by the
conditions

\begin{enumerate}
\item $V_{i}\in N_{z}\widehat{\mathcal{F}},$ $\overline{E_{j}}\in N_{z}%
\widehat{\mathcal{F}}$ for every $i,j$.

\item $\omega \left( V_{i}\right) =0,$ $\omega \left( \overline{E_{j}}%
\right) =E_{j}$ for every $i,j$.

\item $\theta \left( V_{i}\right) =e_{i},$ $\theta \left( \overline{E_{j}}%
\right) =0$ for every $i,j$.
\end{enumerate}

Then the set of $\widehat{\mathcal{F}}$--basic vector fields $\left\{
V_{1},...,V_{q},\overline{E_{1}},...,\overline{E_{\dim G}}\right\} $ is a
transverse parallelism on $\left( \widehat{M},\widehat{\mathcal{F}}\right) $
associated to the connection $\nabla $. By the fact that $\left( \widehat{M},%
\widehat{\mathcal{F}}\right) $ is Riemannian and the structure theorem of
Molino \cite[Chapter 4]{Mo}, the leaf closures of $\left( \widehat{M},%
\widehat{\mathcal{F}}\right) $ are the fibers of a Riemannian submersion $%
\widehat{\pi }:\widehat{M}\rightarrow \widehat{W}$.

Next, we show that the bundle $p^{\ast }E\rightarrow \widehat{M}$ is $G_{%
\widehat{\mathcal{F}}}$--equivariant. An element of the foliation groupoid $%
G_{\widehat{\mathcal{F}}}$ is a triple of the form $\left( y,z,\left[ \cdot %
\right] \right) $, where $y$ and $z$ are points of a leaf of $\widehat{%
\mathcal{F}}$ and $\left[ \cdot \right] $ is the set of all piecewise smooth
curves starting at $y$ and ending at $z$, since all such curves are
equivalent because the holonomy is trivial on $\widehat{M}$. The basic
connection on $E$ induces a $G_{\widehat{\mathcal{F}}}$--action on $p^{\ast
}E$, defined as follows. Given a vector $\left( y,v\right) \in \left(
p^{\ast }E\right) _{y}$ so that $y\in \allowbreak \allowbreak \allowbreak 
\widehat{M}$, $v\in E_{p\left( y\right) }$, we define the action of $%
\widehat{g}=$ $\left( y,z,\left[ \cdot \right] \right) $ by $S_{\widehat{g}%
}\left( y,v\right) =\left( z,P_{\gamma }v\right) $, where $\gamma $ is any
piecewise smooth curve from $p\left( y\right) $ to $p\left( z\right) $ in
the leaf containing $p\left( y\right) $ that lifts to a leafwise curve in $%
\allowbreak \allowbreak \allowbreak \widehat{M}$ from $y$ to $z$ and where $%
P_{\gamma }$ denotes parallel translation in $E$ along the curve $\gamma $.
It is easy to check that this action makes $p^{\ast }E$ into a $G_{\widehat{%
\mathcal{F}}}$--equivariant, foliated vector bundle. The pullback $p^{\ast }$
maps basic sections of $E$ to basic sections of $p^{\ast }E$. Also, the $%
O\left( \allowbreak q\right) \times U\left( k\right) $--action on $\left( 
\widehat{M},{\mathcal{\ }}\widehat{\mathcal{F}}\right) $ induces an action
of $O\left( \allowbreak q\right) \times U\left( k\right) $ on $p^{\ast }E$
that preserves the basic sections.

Observe that if $s\in \allowbreak C_{b}^{\infty }\left( E\right) $, then $%
p^{\ast }s$ is a basic section of $p^{\ast }E$ that is $O\left( \allowbreak
q\right) \times U\left( k\right) $--invariant. Conversely, if $\widehat{s}%
\in C^{\infty }\left( p^{\ast }E\right) $ is $O\left( \allowbreak q\right)
\times U\left( k\right) $--invariant, then $\widehat{s}=p^{\ast }s$ for some 
$s\in \allowbreak C^{\infty }\left( E\right) $. Next, suppose $\widehat{s}%
=p^{\ast }s$ is $O\left( \allowbreak q\right) \times U\left( k\right) $%
--invariant and basic. Given any vector $X\in T_{p\left( y\right) }\mathcal{F%
}$ and its horizontal lift $\widetilde{X}\in T_{y}\widehat{\mathcal{F}}$, we
have 
\begin{equation*}
0=\nabla _{\widetilde{X}}^{p^{\ast }E}\widehat{s}=\nabla _{\widetilde{X}%
}^{p^{\ast }E}p^{\ast }s=p^{\ast }\nabla _{X}^{E}s,
\end{equation*}%
so that $s$ is also basic. We have shown that $\allowbreak C_{b}^{\infty
}\left( E\right) $ is isomorphic to $\allowbreak C_{b}^{\infty }\left( 
\widehat{M},p^{\ast }E\right) ^{O\left( \allowbreak q\right) \times U\left(
k\right) }$.

We now construct a Hermitian vector bundle $\mathcal{E}$ over $\widehat{W}$,
similar to the constructions in \cite{RiTransv} and \cite{EK}. Given $w\in 
\widehat{W}$ and the corresponding leaf closure $\widehat{\pi }^{-1}\left(
w\right) \in \widehat{M}$ , consider a basic section $s\in \allowbreak
C_{b}^{\infty }\left( \widehat{M},p^{\ast }E\right) $ restricted to $%
\widehat{\pi }^{-1}\left( w\right) $. Given any $y\in \widehat{M}$, the
vector $s\left( y\right) $ uniquely determines $s$ on the entire leaf
closure by parallel transport, because the section is smooth. Similarly,
given a vector $v_{y}\in \left( p^{\ast }E\right) _{y}$, there exists a
basic section $s\in \allowbreak C_{b}^{\infty }\left( \widehat{M},p^{\ast
}E\right) $ such that $s\left( y\right) =v_{y}$, because there is no
obstruction to extending, by the following argument. Given a basis $\left\{
b_{1},...,b_{k}\right\} $ of $\mathbb{C}^{k}$, we define the $k$ linearly
independent, basic sections $s_{k}$ of $p^{\ast }E$ by $s_{j}\left( \left(
\phi ,\psi \right) \right) =\psi \left( b_{j}\right) \in \left( p^{\ast
}E\right) _{\left( \phi ,\psi \right) }=E_{p\left( \left( \phi ,\psi \right)
\right) }$ . Thus, given a local frame $\left\{ v_{j}\right\} $ for $p^{\ast
}E$ on a $\widehat{\mathcal{F}}$ --transversal submanifold near $y$, there
is a unique extension of this frame to be a frame consisting of basic
sections on a tubular neighborhood of the leaf closure containing $y$; in
particular a vector may be extended to be a basic section of $p^{\ast }E$.
We now define $\mathcal{E}_{w}=\allowbreak C_{b}^{\infty }\left( \widehat{M}%
,p^{\ast }E\right) \diagup \sim _{w},$ where two basic sections $s$, $%
s^{\prime }:\widehat{M}\rightarrow p^{\ast }E$ are equivalent($s\sim
_{w}s^{\prime }$) if $s\left( y\right) =s^{\prime }\left( y\right) $ for
every $y\in \widehat{\pi }^{-1}\left( w\right) $. By the reasoning above, $%
\mathcal{E}_{w}$ is a complex vector space whose dimension is equal to the
complex rank of $p^{\ast }E\rightarrow \widehat{M}$. Alternately, we could
define $\mathcal{E}_{w}$ to be the vector space of $\widehat{\mathcal{F}}$
-basic sections of $p^{\ast }E$ restricted to the leaf closure $\widehat{\pi 
}^{-1}\left( w\right) $. The union $\cup _{w\in \widehat{W}}$ $\mathcal{E}%
_{w}$ forms a smooth, complex vector bundle $\mathcal{E}$ over $\widehat{W}$%
; local trivializations of $\mathcal{E}$ are given by local, basic framings
of the trivial bundle $p^{\ast }E\rightarrow \widehat{M}$. We remark that in
the constructions of \cite{RiTransv} and \cite{EK}, the vector bundle was
lifted to the transverse orthonormal frame bundle $\widehat{M}$, and in that
case the corresponding bundle $\mathcal{E}$ in those papers could have
smaller rank than $E$.

We let the invertible $\Phi :C^{\infty }\left( \widehat{W},\mathcal{E}%
\right) \rightarrow \allowbreak C_{b}^{\infty }\left( \widehat{M},p^{\ast }E,%
\widehat{\mathcal{F}}\right) $ be the almost tautological map defined as
follows. Given a section $\widehat{s}$ of $\mathcal{E}$, its value at each $%
w\in \widehat{W}$ is an equivalence class $\left[ s\right] _{w}$ of basic
sections. We define for each $y\in \widehat{\pi }^{-1}\left( w\right) $, 
\begin{equation*}
\Phi \left( \widehat{s}\right) \left( y\right) =s\left( y\right) \in \left(
p^{\ast }E\right) _{y}.
\end{equation*}%
By the continuity of the basic section $s$, the above is independent of the
choice of this basic section in the equivalence class. By the definition of $%
\Phi $ and of the trivializations of $\mathcal{E}$, it is clear that $\Phi $
is a smooth map. Also, the $G=O\left( q\right) \times U\left( k\right) $
action on basic sections of $p^{\ast }E$ pushes forward to a $G$ action on
sections of $\mathcal{E}$. We have the following commutative diagram, with $%
W=M\diagup \overline{\mathcal{F}}=\widehat{W}\diagup G$ the leaf closure
space of $\left( M,\mathcal{F}\right) $.

\begin{equation*}
\begin{array}{ccccccc}
p^{\ast }E &  &  &  & \mathcal{E} &  &  \\ 
& \searrow &  &  & \downarrow &  &  \\ 
G & \hookrightarrow & \left( \widehat{M},\widehat{\mathcal{F}}\right) & 
\overset{\widehat{\pi }}{\longrightarrow } & \widehat{W} &  &  \\ 
&  & \downarrow ^{p} & \circlearrowleft & \downarrow \,\, &  &  \\ 
E & \rightarrow & \left( M,\mathcal{F}\right) & \longrightarrow & W &  & 
\end{array}%
\end{equation*}

Observe that we have the necessary data to construct the basic Dirac
operator on sections of $p^{\ast }E$ over $\widehat{M}$ corresponding to the
pullback foliation $p^{\ast }\mathcal{F}$ on $\widehat{M}$. The connection $%
\nabla ^{p^{\ast }E}$ is a basic connection with respect to this Riemannian
foliation, and the normal bundle $N\left( p^{\ast }\mathcal{F}\right) $
projects to the normal bundle $Q=N\mathcal{F}$, so that the action of $%
\allowbreak \mathbb{C}\mathrm{l}\left( Q\right) $ on $E$ lifts to an action
of $\mathbb{C}\mathrm{l}\left( N\left( p^{\ast }\mathcal{F}\right) \right) $
on $p^{\ast }E$. Using this basic Clifford bundle structure, we construct
the transversal Dirac-type operator $D_{\mathrm{\mathrm{tr}},p^{\ast
}}^{p^{\ast }E}$ and the basic Dirac-type operator $D_{b,p^{\ast }}^{p^{\ast
}E}$ on $\allowbreak C_{b}^{\infty }\left( \widehat{M},p^{\ast }E,p^{\ast }%
\mathcal{F}\right) $; we add the subscript $p^{\ast }$ to emphasize that we
are working with $p^{\ast }\mathcal{F}$ rather than the lifted foliation.
Observe that $\allowbreak C_{b}^{\infty }\left( \widehat{M},p^{\ast
}E,p^{\ast }\mathcal{F}\right) =\allowbreak C_{b}^{\infty }\left( \widehat{M}%
,p^{\ast }E,\widehat{\mathcal{F}}\right) ^{G}\subset \allowbreak
C_{b}^{\infty }\left( \widehat{M},p^{\ast }E,\widehat{\mathcal{F}}\right) $.
It is clear from the construction that $p^{\ast }$ is an isomorphism from $%
\allowbreak C_{b}^{\infty }\left( M,E\right) $ to $\allowbreak C_{b}^{\infty
}\left( \widehat{M},p^{\ast }E,p^{\ast }\mathcal{F}\right) $ and $p^{\ast
}\circ D_{b}^{E}=D_{b,p^{\ast }}^{p^{\ast }E}\circ p^{\ast }$.

We define the operator $\mathcal{D}:C^{\infty }\left( \widehat{W},\mathcal{E}%
\right) \rightarrow C^{\infty }\left( \widehat{W},\mathcal{E}\right) $ by 
\begin{equation*}
\mathcal{D}=\Phi ^{-1}\circ \left( D_{\mathrm{\mathrm{tr}},p^{\ast
}}^{p^{\ast }E}-\frac{1}{2}c\left( \widehat{H_{b}}\right) \right) \circ \Phi
,
\end{equation*}%
where $\widehat{H_{b}}$ is the basic mean curvature of the pullback
foliation, which is merely the horizontal lift of $H_{b}$. Let $\mathcal{D}%
^{G}$ denote the restriction of $\mathcal{D}$ to $C^{\infty }\left( \widehat{%
W},\mathcal{E}\right) ^{G}$. Note that 
\begin{equation*}
\Phi :C^{\infty }\left( \widehat{W},\mathcal{E}\right) ^{G}\rightarrow
\allowbreak C_{b}^{\infty }\left( \widehat{M},p^{\ast }E,\widehat{\mathcal{F}%
}\right) ^{G}
\end{equation*}%
is an isomorphism. Observe that the Hermitian metric on $p^{\ast }E$ induces
a well-defined Hermitian metric on $\mathcal{E}$ that is invariant under the
action of $G$.

Assume that $E=E^{+}\oplus E^{-}$ with $D_{b}^{E\pm }:\allowbreak
C_{b}^{\infty }\left( M,E^{\pm }\right) \rightarrow \allowbreak
C_{b}^{\infty }\left( M,E^{\mp }\right) $. We define $D_{b,p^{\ast }}^{\pm }$
to be the restrictions 
\begin{equation*}
\left( D_{\mathrm{\mathrm{tr}},p^{\ast }}^{p^{\ast }E}-\frac{1}{2}c\left( 
\widehat{H_{b}}\right) \right) :C_{b}^{\infty }\left( \widehat{M},p^{\ast
}E^{\pm },p^{\ast }\mathcal{F}\right) \rightarrow \allowbreak C_{b}^{\infty
}\left( \widehat{M},p^{\ast }E^{\mp },p^{\ast }\mathcal{F}\right) ,
\end{equation*}%
We define the bundles $\mathcal{E}^{\pm }$ and the operator 
\begin{equation}
\mathcal{D}^{+}=\Phi ^{-1}\circ \left( D_{\mathrm{\mathrm{tr}},p^{\ast
}}^{p^{\ast }E}-\frac{1}{2}c\left( \widehat{H_{b}}\right) \right) \circ \Phi
:C^{\infty }\left( \widehat{W},\mathcal{E}^{+}\right) \rightarrow C^{\infty
}\left( \widehat{W},\mathcal{E}^{-}\right)  \label{newDFormula}
\end{equation}%
in an analogous way. We now have the following result.

\begin{proposition}
\label{NewD}Let $D_{b}^{E+}:\allowbreak C_{b}^{\infty }\left( M,E^{+}\right)
\rightarrow \allowbreak C_{b}^{\infty }\left( M,E^{-}\right) $ be a basic
Dirac operator for the rank $k$ complex vector bundle $E=E^{+}\oplus E^{-}$
over the transversally oriented Riemannian foliation $\left( M,\mathcal{F}%
\right) ,$ and let $G=O\left( q\right) \times U\left( k\right) $. Then 
\begin{equation*}
\mathrm{ind}_{b}\left( D_{b}^{E+}\right) =\mathrm{ind}\left( \mathcal{D}%
^{G}\right) ,
\end{equation*}%
where $\mathrm{ind}\left( \mathcal{D}^{G}\right) $ refers to the index of
the transversally elliptic operator $\mathcal{D}^{+}$ restricted to $G$
--invariant sections (equivalently, the supertrace of the invariant part of
the virtual representation--valued equivariant index of the operator $%
\mathcal{D}$). It is not necessarily the case that the adjoint $\mathcal{D}%
^{-}:C^{\infty }\left( \widehat{W},\mathcal{E}^{-}\right) \rightarrow
C^{\infty }\left( \widehat{W},\mathcal{E}^{+}\right) $ coincides with $%
\left. \Phi ^{-1}\circ \left( D_{\mathrm{\mathrm{tr}},p^{\ast }}^{p^{\ast
}E}-\frac{1}{2}c\left( \widehat{H_{b}}\right) \right) \circ \Phi \right\vert
_{C^{\infty }\left( \widehat{W},\mathcal{E}^{-}\right) }$, but the principal
transverse symbols of $\mathcal{D}^{+}$ and $\mathcal{D}^{-}$ evaluated on a
normal space to an orbit in $\widehat{W}$ correspond with the restriction of
the principal transverse symbol of $D_{b}^{E+}$ and $D_{b}^{E-}$ restricted
to the normal space to a leaf closure in $M$.
\end{proposition}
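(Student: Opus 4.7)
The plan is to transport $D_b^{E+}$ across an explicit chain of bounded invertible identifications onto the $G$-invariant part of $\mathcal{D}^+$, so that the index equality follows from the invariance of the Fredholm index under bounded invertible conjugation. First I would assemble the chain already made available in the construction,
\[
C_b^{\infty}(M, E^{\pm}) \xrightarrow{\,p^*\,} C_b^{\infty}(\widehat{M}, p^*E^{\pm}, p^*\mathcal{F}) = C_b^{\infty}(\widehat{M}, p^*E^{\pm}, \widehat{\mathcal{F}})^G \xrightarrow{\,\Phi^{-1}\,} C^{\infty}(\widehat{W}, \mathcal{E}^{\pm})^G,
\]
into a single isomorphism $\Psi^{\pm} := \Phi^{-1}\circ p^*$, and note that the $G$-invariant Hermitian metric on $\mathcal{E}$ together with the basic Hermitian metric on $E$ extends $\Psi^{\pm}$ to a bounded invertible map on the corresponding $L^2$ Hilbert spaces (and on Sobolev scales).

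Second, I would verify the intertwining $\mathcal{D}^{+}\circ\Psi^{+} = \Psi^{-}\circ D_b^{E+}$ on $G$-invariant sections. The identity $p^*\circ D_b^{E+} = D_{b,p^*}^{p^*E+}\circ p^*$ has been established, and the basic mean curvature of the pullback foliation is exactly the horizontal lift $\widehat{H_b}$, so that $D_{b,p^*}^{p^*E} = D_{\mathrm{tr},p^*}^{p^*E} - \tfrac{1}{2}c(\widehat{H_b})$; composing with $\Phi^{-1}$ and invoking the defining formula $(\ref{newDFormula})$ for $\mathcal{D}^+$ then yields the intertwining directly.

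Third, I would read off the index equality. Because $\Psi^{\pm}$ are bounded invertible between the relevant Hilbert spaces, the relation $(\mathcal{D}^+)^G = \Psi^{-}\circ D_b^{E+}\circ (\Psi^{+})^{-1}$ forces $\dim\ker (\mathcal{D}^+)^G = \dim\ker D_b^{E+}$ and $\dim\mathrm{coker}\,(\mathcal{D}^+)^G = \dim\mathrm{coker}\,D_b^{E+}$, hence $\mathrm{ind}(\mathcal{D}^G) = \mathrm{ind}_b(D_b^{E+})$. This step absorbs the adjoint caveat: although the formal adjoint $\mathcal{D}^-$ on $\widehat{W}$ need not agree with the naive formula obtained from $(\ref{newDFormula})$ by swapping $E^+$ with $E^-$, because the $L^2$ pairings on $\widehat{W}$ and on $M$ can differ by a smooth positive density reflecting the variation of leaf-closure volumes under $\widehat{\pi}$, this zeroth-order discrepancy cannot alter the cokernel dimension of $(\mathcal{D}^+)^G$ once the isomorphisms $\Psi^{\pm}$ are in place; here I use that the adjoint of $(\mathcal{D}^+)^G$ in $L^2(\widehat W,\mathcal E)^G$ is precisely $(\mathcal{D}^-)^G$, since the $G$-action is by isometries.

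Finally, for the symbol assertion, I would trace the identifications: transverse cotangent directions to a $G$-orbit in $\widehat{W}$ lift via $\widehat{\pi}^*$ to horizontal directions normal to the corresponding leaf closure of $\widehat{\mathcal{F}}$, and then push forward via $p$ to covectors normal to the corresponding leaf closure of $\mathcal{F}$ in $M$; since the Clifford action of $\mathbb{C}\mathrm{l}(N\,p^*\mathcal{F})$ on $p^*E$ is by construction the lift of the action of $\mathbb{C}\mathrm{l}(Q)$ on $E$, the principal transverse symbols $\sigma^{\mathrm{tr}}(\mathcal{D}^{\pm})(\xi) = c(\xi^{\#})$ match the restriction of $\sigma^{\mathrm{tr}}(D_b^{E\pm})$ to the directions normal to leaf closures. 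The main obstacle is the subtlety addressed in step three: one must be comfortable that the Fredholm index is insensitive to the non-isometric nature of $\Psi^{\pm}$ and hence to the unexpected form that $\mathcal{D}^{-}$ may take on $\widehat{W}$; once this is granted, the identity follows formally from the two-sided intertwining.
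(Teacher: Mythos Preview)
Your proposal is correct and follows essentially the same route as the paper: transport $D_b^{E+}$ through the chain $p^{\ast}$ and $\Phi^{-1}$ to identify it with $(\mathcal{D}^{+})^{G}$, then conclude the index equality. The one point of difference is tactical rather than conceptual. You handle the adjoint discrepancy by invoking the stability of the Fredholm index under bounded invertible (not necessarily isometric) conjugation, arguing directly that $\dim\ker$ and $\dim\mathrm{coker}$ are preserved. The paper instead makes this concrete by conformally rescaling the metric on $\widehat{W}$ by the leaf-closure volume function $\phi(w)=\mathrm{vol}(\widehat{\pi}^{-1}(w))$ so that $\Phi$ becomes an honest $L^{2}$-isometry; the kernel of $D_b^{E-}$ then matches the kernel of the adjoint taken with respect to this new metric $g'$, and one finishes by citing Atiyah to say that the invariant index of a transversally elliptic operator is independent of the choice of metric. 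Your abstract argument is cleaner, while the paper's explicit metric change has the advantage of making the boundedness of $\Psi^{\pm}$ and its inverse manifest rather than asserted.
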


\begin{proof}
The kernels satisfy 
\begin{eqnarray*}
\ker (D_{b}^{E+}) &\cong &\ker \left( p^{\ast }\circ D_{b}^{E+}\right) \\
&\cong &\ker \left( \left. D_{b,p^{\ast }}^{+}\circ p^{\ast }\right\vert
_{\allowbreak C_{b}^{\infty }\left( M,E^{+}\right) }\right) \\
&\cong &\ker \left( \left. \left( D_{\mathrm{\mathrm{tr}},p^{\ast
}}^{p^{\ast }E}-\frac{1}{2}c\left( \widehat{H_{b}}\right) \right) \circ
p^{\ast }\right\vert _{\allowbreak C_{b}^{\infty }\left( M,E^{+}\right)
}\right) \\
&\cong &\ker \left( \left. \Phi ^{-1}\circ \left( D_{\mathrm{\mathrm{tr}}%
,p^{\ast }}^{p^{\ast }E}-\frac{1}{2}c\left( \widehat{H_{b}}\right) \right)
\circ \Phi \right\vert _{C^{\infty }\left( \widehat{W},\mathcal{E}%
^{+}\right) ^{G}}\right) \\
&\cong &\ker \left( \mathcal{D}^{+G}\right) ,
\end{eqnarray*}%
the kernel of the operator restricted to $G$ --invariant sections. Next,
while $D_{b}^{E-}$ is the adjoint of $D_{b}^{E+}$ with respect to the $L^{2}$%
-inner product on the closure of the space of basic sections of $E$, it is
not necessarily true that the adjoint of $\mathcal{D}^{+}=\left. \Phi
^{-1}\circ \left( D_{\mathrm{\mathrm{tr}},p^{\ast }}^{p^{\ast }E}-\frac{1}{2}%
c\left( \widehat{H_{b}}\right) \right) \circ \Phi \right\vert _{C^{\infty
}\left( \widehat{W},\mathcal{E}^{+}\right) }$ is $\left. \Phi ^{-1}\circ
\left( D_{\mathrm{\mathrm{tr}},p^{\ast }}^{p^{\ast }E}-\frac{1}{2}c\left( 
\widehat{H_{b}}\right) \right) \circ \Phi \right\vert _{C^{\infty }\left( 
\widehat{W},\mathcal{E}^{-}\right) }$, because although the operators have
the same principal transverse symbol, the volumes of the orbits on $\widehat{%
W}$ need not coincide with the volumes of the leaf closures on $M$, at least
with the metric on $\widehat{W}$ that we have chosen. However, it is
possible to choose a different metric, similar to that used in \cite[Theorem
3.3]{RiTransv}, so by using the induced $L^{2}$-metric on invariant sections
of $\mathcal{E}$ over $\widehat{W}$ and the $L^{2}$ metric on basic sections
of $E$ on $M$, $\Phi $ is an isometry. Specifically, let $\phi :\widehat{W}%
\rightarrow \mathbb{R}$ be the smooth positive function defined by $\phi
\left( w\right) =\mathrm{vol}\left( \widehat{\pi }^{-1}\left( w\right)
\right) $. Let $d_{\widehat{W}}$ be the dimension of $\widehat{W}$. We
determine a new metric $g^{\prime }$ on $\widehat{W}$ by conformally
multiplying the original metric $g$ on $\widehat{W}$ by $\phi ^{2/d_{%
\widehat{W}}}\in C^{\infty }\left( \widehat{W}\right) $, so that the volume
form on $\widehat{W}$ is multiplied by $\phi $. Note that $\phi \left(
w\right) \mathrm{vol}_{g}\left( \mathcal{O}_{w}\right) =\mathrm{vol}\left( 
\overline{L}\right) $ by the original construction, where $\overline{L}%
=p\left( \widehat{\pi }^{-1}\left( w\right) \right) $ is the leaf closure
corresponding to the orbit $\mathcal{O}_{w}=wG\subset \widehat{W}$. By using
the new metric $g^{\prime }$ on $\widehat{W}$, we see that $\Phi $ extends
to an $L^{2}$-isometry and that $G$ still acts by isometries on $\widehat{W}$%
. Then%
\begin{eqnarray*}
\ker \left( D_{b}^{E-}\right) &\cong &\ker \left( \left. \Phi ^{-1}\circ
\left( D_{\mathrm{\mathrm{tr}},p^{\ast }}^{p^{\ast }E}-\frac{1}{2}c\left( 
\widehat{H_{b}}\right) \right) \circ \Phi \right\vert _{C^{\infty }\left( 
\widehat{W},\mathcal{E}^{-}\right) ^{G}}\right) \\
&\cong &\ker \left( \mathcal{D}^{G,\mathrm{adj}^{\prime }}\right) ,
\end{eqnarray*}%
where the superscript $\mathrm{adj}^{\prime }$ refers to the adjoint with
respect to the $L^{2}$ metrics $C^{\infty }\left( \widehat{W},\mathcal{E}%
^{\pm }\right) ^{G}$ induced by $g^{\prime }$. Therefore, the analytic basic
index satisfies%
\begin{equation*}
\mathrm{ind}_{b}\left( D_{b}^{E+}\right) =\mathrm{ind}^{\prime }\left( 
\mathcal{D}^{G}\right) ,
\end{equation*}%
where $\mathrm{ind}^{\prime }\left( \mathcal{D}^{G}\right) $ is the analytic
index of the transversally elliptic operator $\mathcal{D}$ restricted to $G$%
-invariant sections, with adjoint calculated with respect to the choice of
metric $g^{\prime }$. Because the restriction of $\mathcal{D}$ to $G$%
-invariant sections is a Fredholm operator (see \cite{A}), $\mathrm{ind}%
^{\prime }\left( \mathcal{D}^{G}\right) $ is independent of the choice of
metric.
\end{proof}

The Fredholm properties of the equivariant index of transversally elliptic
operators (see \cite{A}) imply the following.

\begin{corollary}
\label{FredholmPropertiesCorollary}In the notation of Proposition \ref{NewD}%
, the analytic basic index$\mathrm{\ ind}_{b}\left( D_{b}^{E+}\right) $ is a
well-defined integer. Further, it is invariant under smooth deformations of
the basic operator and metrics that preserve the invertibility of the
principal symbol $\sigma \left( \xi _{x}\right) $ of $D_{b}^{E+}$ for every $%
x\in M$, but only for $\xi _{x}\in \overline{Q}_{x}^{\ast }=\left(
T_{x}M\diagup T_{x}\overline{L}_{x}\right) ^{\ast }$, the dual to the normal
space to the leaf closure through $x$.
\end{corollary}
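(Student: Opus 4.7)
The plan is to transport everything to the $G$-manifold $\widehat{W}$ via Proposition \ref{NewD} and then invoke Atiyah's theory of transversally elliptic operators \cite{A}.

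First, I would observe that by Proposition \ref{NewD} we have the identification
\[
\mathrm{ind}_{b}\left( D_{b}^{E+}\right) = \mathrm{ind}\left( \mathcal{D}^{G}\right),
\]
where $\mathcal{D}^{+}:C^{\infty}(\widehat{W},\mathcal{E}^{+}) \to C^{\infty}(\widehat{W},\mathcal{E}^{-})$ is a $G$-equivariant operator on the closed manifold $\widehat{W}$, and the superscript $G$ denotes restriction to $G$-invariant sections. The proposition also establishes that the principal transverse symbol of $\mathcal{D}^{\pm}$ evaluated on the normal bundle to a $G$-orbit in $\widehat{W}$ corresponds, under the isomorphism $\Phi$, to the restriction of the principal transverse symbol of $D_{b}^{E\pm}$ to the normal bundle $\overline{Q}^{\ast}$ of the leaf closure in $M$. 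Invertibility of the one thus transfers to invertibility of the other.

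Next, I would invoke Atiyah's theorem \cite{A} on transversally elliptic operators: a $G$-equivariant operator on a closed $G$-manifold whose principal symbol is invertible on vectors conormal to $G$-orbits has a well-defined equivariant index as a distribution on $G$, and in particular the multiplicity of any irreducible representation (including the trivial one) is a finite integer. The invariant part $\mathrm{ind}(\mathcal{D}^{G})$ is therefore a well-defined integer. Atiyah's theory moreover shows that this integer is a homotopy invariant of the principal transverse symbol: any smooth one-parameter family of $G$-equivariant operators whose transverse symbols remain invertible along the deformation preserves $\mathrm{ind}(\mathcal{D}^{G})$.

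Finally, I would translate this deformation invariance back to the foliation. A smooth deformation of $D_{b}^{E+}$ and of the bundle-like metric on $(M,\mathcal{F})$ which keeps the principal symbol $\sigma(\xi_{x})$ invertible for all $\xi_{x}\in \overline{Q}_{x}^{\ast}\setminus 0$ lifts, via the pullback $p^{\ast}$ and the construction of $\widehat{W}$ in Section \ref{BasicIndexAndDiagramSection}, to a smooth deformation of $\mathcal{D}^{+}$ whose transverse symbol remains invertible on conormals to $G$-orbits in $\widehat{W}$. Applying Atiyah's homotopy invariance then gives the claimed invariance of $\mathrm{ind}_{b}(D_{b}^{E+})$. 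The only nontrivial step is verifying that Molino's structure and the constructions of Proposition \ref{NewD} vary smoothly under the deformation so that the correspondence between transverse symbols is preserved throughout; this follows from the naturality of $\Phi$ and the fact that the basic mean curvature $H_{b}$ and its horizontal lift $\widehat{H_{b}}$ depend smoothly on the data.
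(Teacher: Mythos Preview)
Your proposal is correct and follows essentially the same approach as the paper: the paper simply states that the corollary follows from Proposition \ref{NewD} together with the Fredholm properties of the equivariant index of transversally elliptic operators in \cite{A}, and you have unpacked exactly this argument. Your additional remarks about lifting deformations and the smooth dependence of the Molino construction are reasonable elaborations but go beyond what the paper itself supplies.
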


Note that if $f$ is a smooth function on $\widehat{W}$ such that $df_{w}$ is
an element of the dual space to the normal bundle to the orbit space at $%
w\in \widehat{W}$, and if $\widehat{s}$ is a smooth section of $\mathcal{E}%
^{+}$ , then 
\begin{eqnarray*}
\left[ \mathcal{D}^{+},f\right] \widehat{s} &=&\Phi ^{-1}c\left( d\left( 
\widehat{\pi }^{\ast }f\right) ^{\#}\right) \Phi \left( \widehat{s}\right) \\
&=&\Phi ^{-1}c\left( \left( \widehat{\pi }^{\ast }df\right) ^{\#}\right)
\Phi \left( \widehat{s}\right) \\
&=&\widehat{c}\left( df^{\#}\right) \widehat{s}.
\end{eqnarray*}%
This implies that $\mathcal{D}^{+}$ is a Dirac operator on sections of $%
\mathcal{E}^{+}$, since $\left( \mathcal{D}^{+}\right) ^{\ast }\mathcal{D}%
^{+}$ is a generalized Laplacian. The analogous result is true for $\mathcal{%
D}^{-}$~.

It is possible use the Atiyah--Segal Theorem (\cite{ASe}) to compute $%
\mathrm{ind}^{G}\left( \mathcal{D}^{+}\right) $, but only in the case where $%
\mathcal{D}$ is a genuinely elliptic operator. Recall that if $D^{\prime }$
is an elliptic operator on a compact, connected manifold $M$ that is
equivariant with respect to the action of a compact Lie group $G^{\prime }$,
then $G^{\prime }$ represents on both finite--dimensional vector spaces $%
\ker D^{\prime }$ and $\ker D^{\prime \ast }$ in a natural way. For $g\in
G^{\prime }$, 
\begin{equation*}
\mathrm{ind}_{g}\left( D^{\prime }\right) :=\mathrm{tr}\left( g|\ker
D^{\prime }\right) -\mathrm{tr}\left( g|\ker D^{\prime \ast }\right) .
\end{equation*}%
where $dg$ is the normalized, bi-invariant measure on $G^{\prime }$. The
Atiyah-Segal Theorem computes this index in terms of an integral over the
fixed point set of $g$. We will use our equivariant index theorem in \cite%
{BKR} to evaluate $\mathrm{ind}^{G}\left( \mathcal{D}^{+}\right) $ in terms
of geometric invariants of the operator restricted to the strata of the
foliation.

\begin{remark}
\label{SometimesNotUseU(k)Remark}Under the additional assumption that $%
E\rightarrow \left( M,\mathcal{F}\right) $ is $G_{\mathcal{F}}$%
--equivariant, the pullback of $E$ to the transverse orthonormal frame
bundle is already transversally parallelizable. Thus, it is unnecessary in
this case to pull back again to the unitary frame bundle. We may then
replace $G$ by $O\left( q\right) $, and $\widehat{W}$ is the so-called 
\textbf{basic manifold} of the foliation, as in the standard construction in 
\cite{Mo}.
\end{remark}

\begin{remark}
\label{TransversallyOrientableCaseRemark}If $\left( M,\mathcal{F}\right) $
is in fact transversally orientable, we may replace $O\left( q\right) $ with 
$SO\left( q\right) $ and work with the bundle of oriented orthonormal frames.
\end{remark}

\subsection{The asymptotic expansion of the trace of the basic heat kernel 
\label{AsymptoticsSection}}

In this section, we will state some results concerning the spectrum of the
square of a basic Dirac-type operator and the heat kernel corresponding to
this operator, which are corollaries of the work in the previous section and
are of independent interest.

\begin{proposition}
\label{discrete spectrum} Let $D_{b}^{E+}:\allowbreak C_{b}^{\infty }\left(
M,E^{+}\right) \rightarrow \allowbreak C_{b}^{\infty }\left( M,E^{-}\right) $
be a basic Dirac operator for the rank $k$ complex vector bundle $%
E=E^{+}\oplus E^{-}$ over the transversally oriented Riemannian foliation $%
\left( M,\mathcal{F}\right) ,$ and let $\left( D_{b}^{E+}\right) ^{\mathrm{\
adj}}$ be the adjoint operator. Then the operators 
\begin{eqnarray*}
L^{+} &=&\left( D_{b}^{E+}\right) ^{\mathrm{adj}}D_{b}^{E+}, \\
L^{-} &=&D_{b}^{E+}\left( D_{b}^{E+}\right) ^{\mathrm{adj}}
\end{eqnarray*}
\newline
are essentially self--adjoint, and their spectrum consists of nonnegative
real eigenvalues with finite multiplicities. Further, the operators $L^{\pm
} $ have the same positive spectrum, including multiplicities.
\end{proposition}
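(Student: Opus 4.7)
The plan is to transfer the analysis to the auxiliary manifold $\widehat{W}$ using Proposition \ref{NewD} and then invoke standard elliptic theory for a generalized Laplacian on a closed manifold. Under the isomorphism $\Phi$ (with the modified metric $g'$ on $\widehat{W}$ that makes $\Phi$ an $L^2$-isometry, as in the proof of Proposition \ref{NewD}), the operator $D_{b}^{E+}$ corresponds to the restriction of $\mathcal{D}^{+}$ to $G$-invariant sections of $\mathcal{E}^{+}$, and its formal adjoint corresponds to the restriction of $\mathcal{D}^{-}$ to $G$-invariants. Thus $L^{\pm}$ is unitarily equivalent to $\mathcal{D}^{\mp}\mathcal{D}^{\pm}$ restricted to the $G$-invariant subspace, and it suffices to prove the statements for these restrictions.

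First I would note the commutator computation displayed just after Corollary \ref{FredholmPropertiesCorollary}, which identifies $\mathcal{D}^{\pm}$ as Dirac-type operators in the sense that $[\mathcal{D}^{\pm},f]=\widehat{c}(df^{\#})$ for smooth functions $f$ on $\widehat{W}$. Consequently $\mathcal{D}^{\mp}\mathcal{D}^{\pm}$ is a generalized Laplacian with scalar principal symbol $|\xi|^{2}$ on $\widehat{W}$; in particular it is a genuinely elliptic, second-order, formally self-adjoint operator on the closed Riemannian manifold $\widehat{W}$ (closedness of $\widehat{W}$ follows from the Molino description recalled in Section \ref{BasicIndexAndDiagramSection}, since $M$ is closed). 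Standard elliptic theory then yields essential self-adjointness, together with a discrete spectrum consisting of real nonnegative eigenvalues of finite multiplicity accumulating only at $+\infty$.

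Next, since $G$ acts on $\widehat{W}$ by isometries and lifts to an action on $\mathcal{E}^{\pm}$ commuting with $\mathcal{D}^{\pm}$, the closed subspace of $G$-invariant $L^{2}$-sections is preserved by $\mathcal{D}^{\mp}\mathcal{D}^{\pm}$, so the restriction is again essentially self-adjoint. Each eigenspace $\ker(\mathcal{D}^{\mp}\mathcal{D}^{\pm}-\lambda)$ in $L^{2}(\widehat{W},\mathcal{E}^{\pm})$ is finite-dimensional, so a fortiori its $G$-invariant part is finite-dimensional, giving eigenvalues of finite multiplicity for $L^{\pm}$. Transferring back by $\Phi$ proves the first half of the proposition.

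For the coincidence of positive spectra, I would use the standard intertwining argument: if $L^{+}s=\lambda s$ with $\lambda>0$ and $s\in C_{b}^{\infty}(M,E^{+})$, then $L^{-}(D_{b}^{E+}s)=D_{b}^{E+}(D_{b}^{E+})^{\mathrm{adj}}D_{b}^{E+}s=\lambda(D_{b}^{E+}s)$, and since $\|D_{b}^{E+}s\|^{2}=\lambda\|s\|^{2}\neq0$, the map $D_{b}^{E+}$ sends the $\lambda$-eigenspace of $L^{+}$ injectively into that of $L^{-}$; symmetrically $(D_{b}^{E+})^{\mathrm{adj}}$ gives the inverse direction (up to rescaling by $\lambda$), producing an isomorphism of $\lambda$-eigenspaces. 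The main obstacle is really a conceptual one rather than a technical one: one must be careful that the adjoint $(D_{b}^{E+})^{\mathrm{adj}}$ used to define $L^{\pm}$ is the basic adjoint (on the closure of basic sections), not the formal adjoint on all of $L^{2}(E)$, and it is precisely the identification via $\Phi$ with the $G$-invariant sector on $\widehat{W}$ that legitimizes the use of ordinary elliptic theory.
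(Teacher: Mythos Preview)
Your overall strategy---transfer to $\widehat{W}$ via Proposition~\ref{NewD} and then invoke spectral theory there---is exactly the paper's, and your intertwining argument for the equality of positive spectra is the standard one the paper leaves implicit.

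There is, however, a genuine gap in your second paragraph. The commutator identity displayed after Corollary~\ref{FredholmPropertiesCorollary} is only asserted for functions $f$ with $df_{w}$ in the conormal space to the $G$-orbit; it is not claimed for arbitrary $f$. Indeed $\mathcal{D}$, being conjugate to $D_{\mathrm{tr},p^{\ast}}^{p^{\ast}E}$, differentiates only in the $q$ horizontal directions coming from $p^{\ast}Q$, whereas $T_{w}\widehat{W}$ also contains the tangent space to the $G$-orbit (arising from the vertical directions in $\widehat{M}$). Thus $\mathcal{D}^{\mp}\mathcal{D}^{\pm}$ is \emph{not} a genuine Laplacian on $\widehat{W}$: its principal symbol degenerates along orbit-tangential covectors, which is exactly why both Proposition~\ref{NewD} and the paper's one-line proof here call $\mathcal{D}$ only \emph{transversally} elliptic. (The phrase ``generalized Laplacian'' in that passage of the paper is loose and may have misled you.) Your appeal to standard elliptic theory for the full operator on $\widehat{W}$ therefore fails---a merely transversally elliptic self-adjoint operator can have infinite-dimensional eigenspaces. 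The fix is precisely what the paper does: recognize $\mathcal{D}^{\mp}\mathcal{D}^{\pm}$ as second-order, $G$-equivariant, and transversally elliptic on the closed manifold $\widehat{W}$, and invoke the theory of such operators (\cite{A}, \cite{BrH1}, \cite{BrH2}), which yields essential self-adjointness and discrete spectrum of finite multiplicity on each isotypical component, in particular on the $G$-invariants.
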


\begin{proof}
By (\ref{newDFormula}) and the proof of Proposition \ref{NewD}, the
operators $L^{+}$ and $L^{-}$ are conjugate to essentially self-adjoint,
second order, $G$-equivariant, transversally elliptic operators on $\widehat{%
W}$.
\end{proof}

The basic heat kernel $K_{b}(t,x,y)$ for $L$ is a continuous section of $%
E\boxtimes E^{\ast }$ over $\mathbb{R}_{>0}\times M\times M$ that is $C^{1}$
with respect to $t$, $C^{2}$ with respect to $x$ and $y$, and satisfies, for
any vector $e_{y}\in E_{y}$, 
\begin{eqnarray*}
\left( \frac{\partial }{\partial t}+L_{x}\right) K_{b}(t,x,y)e_{y} &=&0 \\
\lim_{t\rightarrow 0^{+}}\int_{M}K_{b}(t,x,y)\,s(y)\,dV(y) &=&s(x)
\end{eqnarray*}%
for every continuous basic section $s:M\rightarrow E$. 
The principal transverse symbol of $L$ satisfies $\sigma \left( L\right)
\left( \xi _{x}\right) =\left\vert \xi _{x}\right\vert ^{2}I_{x}$ for every $%
\xi _{x}\in N_{x}\mathcal{F}$, where $I_{x}:E_{x}\rightarrow E_{x}$ is the
identity 
operator. The existence of the basic heat kernel has already been shown in 
\cite{Ri}. Let $\overline{q}$ be the codimension of the leaf closures of $(M,%
\mathcal{F})$ with maximal dimension. The following theorems are
consequences of \cite{BrH1}, \cite{BrH2} and the conjugacy mentioned in the
proof of the proposition above.

\begin{theorem}
\label{asymptotics1} Under the assumptions in Proposition~\ref{discrete
spectrum}, let $0<\lambda _{0}^{b}\leq \lambda _{1}^{b}\leq \lambda
_{2}^{b}\leq ...$ be the eigenvalues of $L\left\vert _{\allowbreak
C_{b}^{\infty }\left( E\right) }\right. $, counting multiplicities. Then the
spectral counting function $N_{b}\left( \lambda \right) $ satisfies the
asymptotic formula 
\begin{eqnarray*}
N_{b}\left( \lambda \right) &:&=\#\left\{ \left. \lambda _{m}^{b}\right\vert
\,\lambda _{m}^{b}<\lambda \right\} \\
&\sim &\frac{\mathrm{rank}\left( \mathcal{E}\right) V_{tr}}{\left( 4\pi
\right) ^{\overline{q}/2}\Gamma \left( \frac{\overline{q}}{2}+1\right) }%
\lambda ^{\overline{q}/2}.
\end{eqnarray*}
\end{theorem}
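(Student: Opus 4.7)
The plan is to transfer the eigenvalue problem from $\left(M,\mathcal{F}\right)$ to the compact manifold $\widehat{W}$ via the isometry $\Phi$ and then apply the equivariant Weyl asymptotic theorem of Br\"uning--Heintze. By Proposition~\ref{NewD} and the construction preceding Proposition~\ref{discrete spectrum}, the operator $L$ acting on $L^{2}\left(C_{b}^{\infty}\left(E\right)\right)$ is unitarily equivalent (after the conformal modification $g^{\prime}$ of the metric on $\widehat{W}$) to the restriction $\mathcal{L}^{G}$ of a second order, formally self-adjoint, $G$-equivariant, transversally elliptic operator $\mathcal{L}=\Phi^{-1}L\Phi$ on $C^{\infty}\left(\widehat{W},\mathcal{E}\right)^{G}$, with $G=O(q)\times U(k)$ compact. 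The principal transverse symbol of $\mathcal{L}$ is scalar: $\sigma\left(\mathcal{L}\right)\left(\xi\right)=\left\vert\xi\right\vert^{2}I$ on the conormal bundle to $G$-orbits, inherited from the analogous property of $L$.

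My first step is to observe that the eigenvalue counting function $N_{b}\left(\lambda\right)$ for $L$ on basic sections equals the counting function for $\mathcal{L}^{G}$ on $G$-invariant sections of $\mathcal{E}\to\widehat{W}$. Next, I would apply the equivariant heat kernel asymptotics of \cite{BrH1,BrH2}: for a $G$-equivariant transversally elliptic operator whose transverse symbol is $\left\vert\xi\right\vert^{2}I$, the asymptotic expansion of the $G$-invariant heat supertrace $\mathrm{tr}\left(e^{-t\mathcal{L}^{G}}\right)$ is controlled by the principal stratum $\widehat{W}_{pr}\subset\widehat{W}$, whose orbits have minimal dimension and maximal codimension. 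The leading term has the form
\begin{equation*}
\mathrm{tr}\left(e^{-t\mathcal{L}^{G}}\right)\sim\frac{\mathrm{rank}\left(\mathcal{E}\right)\,V_{tr}}{\left(4\pi t\right)^{\overline{q}/2}},\qquad t\to 0^{+},
\end{equation*}
where $\overline{q}$ is the codimension of a principal orbit in $\widehat{W}$ and $V_{tr}$ is the Riemannian volume of the transverse orbit space of $\widehat{W}_{pr}$ under the new metric. I would then identify these quantities in the original foliated picture: under $\widehat{\pi}\circ p$, principal $G$-orbits correspond bijectively to leaf closures of maximal dimension, so their codimension in $\widehat{W}$ equals the codimension $\overline{q}$ of maximal-dimensional leaf closures in $M$, and the transverse volume matches $V_{tr}$ as defined in the statement.

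Once the small-time heat trace asymptotic is established, the Weyl law on $N_{b}$ follows from a standard Karamata--Tauberian argument: since $\mathcal{L}^{G}$ has discrete nonnegative spectrum by Proposition~\ref{discrete spectrum} and $e^{-t\mathcal{L}^{G}}$ is trace class for $t>0$, Karamata's theorem converts the leading $t^{-\overline{q}/2}$ singularity of the trace into
\begin{equation*}
N_{b}\left(\lambda\right)\sim\frac{\mathrm{rank}\left(\mathcal{E}\right)\,V_{tr}}{\left(4\pi\right)^{\overline{q}/2}\Gamma\left(\frac{\overline{q}}{2}+1\right)}\,\lambda^{\overline{q}/2},
\end{equation*}
which is exactly the claimed formula.

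The main obstacle is the identification of the geometric constants on the two sides of the isomorphism $\Phi$. The Br\"uning--Heintze result is stated intrinsically on $\widehat{W}$ with the modified metric $g^{\prime}$, whereas $V_{tr}$ in the theorem is a transverse-volume invariant of $\left(M,\mathcal{F}\right)$; verifying that the conformal factor $\phi^{2/d_{\widehat{W}}}$ used to make $\Phi$ an isometry produces exactly the transverse volume of the maximal-dimensional-leaf-closure stratum (and no extra factors from the rank of $\mathcal{E}$ versus the rank of $E$, or from the $G$-volumes of principal isotropy groups) is the bookkeeping step that requires the most care. All other ingredients---self-adjointness, discreteness of spectrum, and the conjugacy to a transversally elliptic operator on $\widehat{W}$---have already been supplied earlier in the paper.
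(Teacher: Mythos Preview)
Your proposal is correct and follows essentially the same route as the paper: the paper's entire argument for this theorem is the single sentence ``The following theorems are consequences of \cite{BrH1}, \cite{BrH2} and the conjugacy mentioned in the proof of the proposition above,'' and your write-up simply unpacks that sentence (conjugacy via $\Phi$ to a $G$-equivariant transversally elliptic operator on $\widehat{W}$, then the Br\"uning--Heintze equivariant Weyl law, plus the standard Tauberian step). The bookkeeping concern you flag about matching $V_{tr}$ under the conformal change $g^{\prime}$ is real but is precisely what the cited references and the constructions in Section~\ref{BasicIndexAndDiagramSection} are meant to handle; the paper does not spell this out either.
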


\begin{theorem}
\label{asymptotics2} Under the assumptions in Proposition~\ref{discrete
spectrum}, the heat operators $e^{-tL^{+}}$ and $e^{-tL^{-}}$ are trace
class, and they satisfy the following asymptotic expansions. Then, as $%
t\rightarrow 0$, 
\begin{equation*}
\mathrm{Tr}e^{-tL^{\pm }}=K_{b}^{\pm }(t)\sim \frac{1}{t^{\overline{q}/2}}%
\left( a_{0}^{\pm }+\sum_{\substack{ {j\geq 1}  \\ 0\leq k<K_{0}}}%
a_{j,k}^{\pm }t^{j/2}(\log t)^{k}\right) ,
\end{equation*}%
where $K_{0}$ is less than or equal to the number of different dimensions of
closures of infinitesimal holonomy groups of the leaves of $\mathcal{F}$.
\end{theorem}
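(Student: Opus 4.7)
The strategy is to transport the problem to the $G$-manifold $\widehat{W}$ and invoke the equivariant heat kernel asymptotics of Brüning--Heintze. By Proposition~\ref{NewD} and the isometry construction in its proof, the basic operators $L^{\pm}=(D_b^{E\pm})^{\mathrm{adj}}D_b^{E\pm}$ (and the other order) are unitarily conjugate, via $\Phi$ extended to the $L^2$-closure with the modified metric $g'$, to second-order, $G$-equivariant, transversally elliptic, essentially self-adjoint operators $\mathcal{L}^{\pm}$ on sections of $\mathcal{E}^{\pm}\to\widehat{W}$. The principal transverse symbol of each $\mathcal{L}^{\pm}$ is scalar, equal to $|\xi|^2$ times the identity, in directions normal to the $G$-orbits. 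The heat semigroup $e^{-tL^{\pm}}$ on $L^2$-basic sections is therefore intertwined with the restriction $e^{-t\mathcal{L}^{\pm}}\bigr|_{\mathrm{inv}}$ to the $G$-invariant subspace on $\widehat{W}$.

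Next, I would identify $\mathrm{Tr}(e^{-tL^{\pm}})$ with the invariant trace
\[
\mathrm{Tr}\bigl(e^{-t\mathcal{L}^{\pm}}\bigr|_{\mathrm{inv}}\bigr)=\int_{G}\mathrm{Tr}\bigl(g\circ e^{-t\mathcal{L}^{\pm}}\bigr)\,dg,
\]
where $dg$ is the normalized Haar measure. The transverse ellipticity guarantees that $e^{-t\mathcal{L}^{\pm}}$ is only trace class after this averaging, but the integrand is a smooth equivariant heat kernel on the $G$-manifold $\widehat{W}$, and the restricted operator is genuinely elliptic when passed to any local slice, so Fredholm/trace-class properties of the invariant part follow from \cite{A}.

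The main analytic input is then the asymptotic expansion of Brüning--Heintze \cite{BrH1,BrH2} for $\int_{G}\mathrm{Tr}(g\circ e^{-t\mathcal{L}^{\pm}})\,dg$: the leading order is controlled by the maximal-dimensional orbit stratum of $G$ on $\widehat{W}$, whose transverse codimension equals $\overline{q}$ (the codimension of a maximal-dimensional leaf closure in $(M,\mathcal{F})$, since $p:\widehat{M}\to M$ and $\widehat{\pi}:\widehat{M}\to\widehat{W}$ match generic leaf closures with generic $G$-orbits). This gives the leading term $t^{-\overline q/2}a_0^{\pm}$. Contributions from the smaller orbit types (equivalently, from leaves whose infinitesimal holonomy closure has a larger dimension) produce the subleading terms $t^{j/2}(\log t)^k$, with the logarithmic power $k$ bounded by the number of distinct orbit-type dimensions minus one, i.e. by the number of distinct dimensions of infinitesimal holonomy closures of leaves of $\mathcal{F}$; this gives the bound on $K_0$.

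The main obstacle is verifying that the dictionary between orbit types on $\widehat{W}$ and infinitesimal holonomy closures in $(M,\mathcal{F})$ is precise enough to get the sharp bound on $K_0$. This is essentially Molino's structure theorem together with the description of the $G=O(q)\times U(k)$-action on leaf closures in $\widehat{M}$: the stabilizer type at a generic point of $\widehat{\pi}^{-1}(w)$ reflects the structure of the closure of the infinitesimal holonomy group of the corresponding leaf, so distinct orbit-type dimensions on $\widehat{W}$ correspond to distinct infinitesimal holonomy-closure dimensions on $M$. Once this identification is in place, the expansion is a direct quotation of the Brüning--Heintze expansion applied to $\mathcal{L}^{\pm}$, and trace-classness of $e^{-tL^{\pm}}$ follows from the convergence of the leading singular integral together with Proposition~\ref{discrete spectrum}.
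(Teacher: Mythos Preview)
Your proposal is correct and follows exactly the approach the paper indicates: the paper simply states that Theorem~\ref{asymptotics2} (together with Theorem~\ref{asymptotics1}) is a consequence of \cite{BrH1}, \cite{BrH2} and the conjugacy of $L^{\pm}$ with $G$-equivariant transversally elliptic operators on $\widehat{W}$ established in the proof of Proposition~\ref{NewD}. You have supplied precisely this argument in more detail, including the identification of the stratification on $\widehat{W}$ with the infinitesimal holonomy data on $(M,\mathcal{F})$ needed for the bound on $K_0$.
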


\begin{remark}
\emph{(The basic zeta function and determinant of the generalized basic
Laplacian)} We remark that due to the singular asymptotics lemma of \cite{BS}%
, we have that $a_{jk}=0$ for $j\leq \overline{q}$, $k>0$. We conjecture
that all the logarithmic terms vanish. Note that the fact that $a_{\overline{%
q}k}=0$ for $k>0$ implies that the corresponding zeta function $\zeta
_{L}\left( z\right) $ is regular at $z=0$, so that the regularized
determinant of $L$ may be defined.
\end{remark}

\subsection{Stratifications of $G$-manifolds and Riemannian foliations\label%
{stratification}}

In the following, we will describe some standard results from the theory of
Lie group actions and Riemannian foliations (see \cite{Bre}, \cite{Kaw}, 
\cite{Mo}). Such $G$-manifolds and Riemannian foliations are stratified
spaces, and the stratification can be described explicitly. In the following
discussion, we often have in mind that $\widehat{W}$ is the basic manifold
corresponding to $\left( M,\mathcal{F}\right) $ described in the last
section, but in fact the ideas apply to any Lie group $G$ acting on a
smooth, closed, connected manifold $\widehat{W}$. In the context of this
paper, either $G$ is $O\left( q\right) $, $SO\left( q\right) $, or the
product of one of these with $U\left( k\right) $. We will state the results
for general $G$ and then specialize to the case of Riemannian foliations $%
\left( M,\mathcal{F}\right) $ and the associated basic manifold. We also
emphasize that our stratification of the foliation may be finer than that
described in \cite{Mo}, because in addition we consider the action of the
holonomy on the relevant vector bundle when identifying isotropy types.

Given a $G$-manifold $\widehat{W}$ and $w\in \widehat{W}$, an orbit $%
\mathcal{O}_{w}=\left\{ gw:g\in G\right\} $ is naturally diffeomorphic to $%
G/H_{w}$, where $H_{w}=\{g\in G\,|wg=w\}$ is the (closed) isotropy or
stabilizer subgroup. In the foliation case, the group $H_{w}$ is isomorphic
to the structure group corresponding to the principal bundle $p:\widehat{\pi 
}^{-1}(w)\rightarrow \overline{L}$, where $\overline{L}$ is the leaf closure 
$p\left( \widehat{\pi }^{-1}(w)\right) $ in $M$. Given a subgroup $H$ of $G$%
, let $\left[ H\right] $ denote the conjugacy class of $H$. The isotropy
type of the orbit $\mathcal{O}_{x}$ is defined to be the conjugacy class $%
\left[ H_{w}\right] $ , which is well--defined independent of $w\in \mathcal{%
O}_{x}$. On any such $G$-manifold, there are a finite number of orbit types,
and there is a partial order on the set of orbit types. Given subgroups $H$
and $K$ of $G$, we say that $\left[ H\right] \leq $ $\left[ K\right] $ if $H$
is conjugate to a subgroup of $K$, and we say $\left[ H\right] <$ $\left[ K%
\right] $ if $\left[ H\right] \leq $ $\left[ K\right] $ and $\left[ H\right]
\neq $ $\left[ K\right] $. We may enumerate the conjugacy classes of
isotropy subgroups as $\left[ G_{0}\right] ,...,\left[ G_{r}\right] $ such
that $\left[ G_{i}\right] \leq \left[ G_{j}\right] $ implies that $i\leq j$.
It is well-known that the union of the principal orbits (those with type $%
\left[ G_{0}\right] $) form an open dense subset $\widehat{W}_{0}=\widehat{W}%
\left( \left[ G_{0}\right] \right) $ of the manifold $\widehat{W}$, and the
other orbits are called \textbf{singular}. As a consequence, every isotropy
subgroup $H$ satisfies $\left[ G_{0}\right] \leq \left[ H\right] $. Let $%
\widehat{W}_{j}$ denote the set of points of $\widehat{W}$ of orbit type $%
\left[ G_{j}\right] $ for each $j$; the set $\widehat{W}_{j}$ is called the 
\textbf{stratum} corresponding to $\left[ G_{j}\right] $. If $\left[ G_{j}%
\right] \leq \left[ G_{k}\right] $, it follows that the closure of $\widehat{%
W}_{j}$ contains the closure of $\widehat{W}_{k}$. A stratum $\widehat{W}%
_{j} $ is called a \textbf{minimal stratum} if there does not exist a
stratum $\widehat{W}_{k}$ such that $\left[ G_{j}\right] <\left[ G_{k}\right]
$ (equivalently, such that $\overline{\widehat{W}_{k}}\subsetneq \overline{%
\widehat{W}_{j}}$). It is known that each stratum is a $G$-invariant
submanifold of $\widehat{W}$, and in fact a minimal stratum is a closed (but
not necessarily connected) submanifold. Also, for each $j$, the submanifold $%
\widehat{W}_{\geq j}:=\bigcup\limits_{\left[ G_{k}\right] \geq \left[ G_{j}%
\right] }\widehat{W}_{k}$ is a closed, $G$-invariant submanifold.

Now, given a proper, $G$-invariant submanifold $S$ of $\widehat{W}$ and $%
\varepsilon >0$, let $T_{\varepsilon }(S)$ denote the union of the images of
the exponential map at $s$ for $s\in S$ restricted to the open ball of
radius $\varepsilon $ in the normal bundle at $S$. It follows that $%
T_{\varepsilon }(S)$ is also $G$-invariant. If $\widehat{W}_{j}$ is a
stratum and $\varepsilon $ is sufficiently small, then all orbits in $%
T_{\varepsilon }\left( \widehat{W}_{j}\right) \setminus \widehat{W}_{j}$ are
of type $\left[ G_{k}\right] $, where $\left[ G_{k}\right] <\left[ G_{j}%
\right] $. This implies that if $j<k$, $\overline{\widehat{W}_{j}}\cap 
\overline{\widehat{W}_{k}}\neq \varnothing $, and $\widehat{W}_{k}\subsetneq 
\overline{\widehat{W}_{j}}$, then $\overline{\widehat{W}_{j}}$ and $%
\overline{\widehat{W}_{k}}$ intersect at right angles, and their
intersection consists of more singular strata (with isotropy groups
containing conjugates of both $G_{k}$ and $G_{j}$).

Fix $\varepsilon >0$. We now decompose $\widehat{W}$ as a disjoint union of
sets $\widehat{W}_{0}^{\varepsilon },\dots ,\widehat{W}_{r}^{\varepsilon }$.
If there is only one isotropy type on $\widehat{W}$, then $r=0$, and we let $%
\widehat{W}_{0}^{\varepsilon }=\Sigma _{0}^{\varepsilon }=\widehat{W}_{0}=%
\widehat{W}$. Otherwise, for $j=r,r-1,...,0$, let $\varepsilon
_{j}=2^{j}\varepsilon $, and let%
\begin{equation}
{\ }\Sigma _{j}^{\varepsilon }=\widehat{W}_{j}\setminus \overline{%
\bigcup_{k>j}\widehat{W}_{k}^{\varepsilon }},~~~\widehat{W}_{j}^{\varepsilon
}=T_{\varepsilon _{j}}\left( \widehat{W}_{j}\right) \setminus \overline{%
\bigcup_{k>j}\widehat{W}_{k}^{\varepsilon }},  \label{Mjepsilon}
\end{equation}%
Thus, 
\begin{equation*}
{\ }T_{\varepsilon }\left( \Sigma _{j}^{\varepsilon }\right) \subset 
\widehat{W}_{j}^{\varepsilon },~\Sigma _{j}^{\varepsilon }\subset \widehat{W}%
_{j}.
\end{equation*}

We now specialize to the foliation case. Let $\left( M,\mathcal{F}\right) $
be a Riemannian foliation, and let $E\rightarrow M$ be a foliated Hermitian
vector bundle over $M$ (defined in Section \ref{BasicConnectionsSection}).
Let the $G$-bundle $\widehat{M}\rightarrow M$ be either the orthonormal
transverse frame bundle of $\left( M,\mathcal{F}\right) $ or the bundle of
ordered pairs $\left( \alpha ,\beta \right) $, with $\alpha $ a orthonormal
transverse frame and $\beta $ an orthonormal frame of $E$ with respect to
the Hermitian inner product on $E$, as in Section \ref%
{BasicIndexAndDiagramSection}. In the former case, $\widehat{M}\overset{p}{%
\longrightarrow }M$ is an $O\left( q\right) $-bundle, and in the latter
case, $\widehat{M}$ is an $O\left( q\right) \times U\left( k\right) $%
-bundle. We also note that in the case where $\left( M,\mathcal{F}\right) $
is transversally oriented, we may replace $O\left( q\right) $ with $SO\left(
q\right) $ and choose oriented transverse frames. In Section \ref%
{BasicIndexAndDiagramSection}, we showed that the foliation $\mathcal{F}$
lifts to a foliation $\widehat{\mathcal{F}}$ on $\widehat{M}$, and the
lifted foliation is transversally parallelizable. We chose a natural metric
on $\widehat{M}$, as explained in Section \ref{BasicIndexAndDiagramSection}.
By Molino theory (\cite{Mo}), the leaf closures of $\widehat{\mathcal{F}}$
are diffeomorphic and have no holonomy; they form a Riemannian fiber bundle $%
\widehat{M}\overset{\widehat{\pi }}{\longrightarrow }\widehat{W}$ over what
is called the \textbf{basic manifold} $\widehat{W}$, on which the group $G$
acts by isometries.

We identify the spaces $\widehat{W}_{i}^{\varepsilon }$, $\widehat{W}%
_{i}^{\varepsilon }\diagup G$, and $\overline{\widehat{W}_{i}^{\varepsilon }}%
\diagup G$ with the corresponding $M_{i}^{\varepsilon }$, $%
M_{i}^{\varepsilon }\diagup \overline{\mathcal{F}}$, and $\overline{%
M_{i}^{\varepsilon }}\diagup \overline{\mathcal{F}}$ on $M$ via the
correspondence 
\begin{equation*}
p\left( \widehat{\pi }^{-1}\left( G\text{--orbit on }\widehat{W}\right)
\right) =\text{leaf closure of }\left( M,\mathcal{F}\right) .
\end{equation*}%
The following result is contained in \cite{Mo}, which is a consequence of
Riemannian foliation theory and the decomposition theorems of $G$-manifolds
(see \cite{Kaw}). However, we note that the decomposition described below
may be finer than that described in Molino, as the bundle $E\rightarrow M$
is used in our construction to construct the basic manifold, and the group
acting may be larger than the orthogonal group. The action of the holonomy
on the bundle may participate in the decomposition of the foliation.

\begin{lemma}
\label{foliationDecomposition}Let $\left( M,\mathcal{F}\right) $ be a
Riemannian foliation with bundle-like metric. Let $\overline{\mathcal{F}}$
denote the (possibly) singular foliation by leaf closures of $\mathcal{F}$.
We let $M_{j}=M\left( \left[ G_{j}\right] \right) =p\left( \widehat{\pi }%
^{-1}\left( \widehat{W}\left( \left[ G_{j}\right] \right) \right) \right) $, 
$M_{i}^{\varepsilon }=p\left( \widehat{\pi }^{-1}\left( \widehat{W}%
_{i}^{\varepsilon }\right) \right) $ with $\widehat{W}$, $\widehat{W}\left( %
\left[ G_{j}\right] \right) $, $\widehat{W}_{i}^{\varepsilon }$ defined as
above on the basic manifold. Note that $M_{j}$ is a stratum on $M$
corresponding to the union of all leaf closures whose structure group of the
principal bundle $p:\overline{\widehat{L}}\rightarrow \overline{L}$ is in $%
\left[ G_{j}\right] $, where $\overline{\widehat{L}}$ is a leaf closure of $%
\widehat{M}$ that projects to $\overline{L}$. It follows that all the leaf
closures in $M_{j}$ have the same dimension. Then we have, for every $i\in
\{1,\ldots ,r\}$ and sufficiently small $\varepsilon >0$:

\begin{enumerate}
\item $\displaystyle M=\coprod_{i=0}^{r}M_{i}^{\varepsilon }$ (disjoint
union).

\item $M_{i}^{\varepsilon }$ is a union of leaf closures.

\item The manifold $M_{i}^{\varepsilon }$ is diffeomorphic to the interior
of a compact manifold with corners; the leaf closure space space $%
M_{i}^{\varepsilon }\diagup \overline{\mathcal{F}}\cong \widehat{W}%
_{i}^{\varepsilon }\diagup O\left( q\right) $ is a smooth manifold that is
isometric to the interior of a triangulable, compact manifold with corners.
The same is true for each $\Sigma _{i}^{\varepsilon }$, $M_{i}\diagup 
\overline{\mathcal{F}}$.

\item If $\left[ G_{j}\right] $ is the isotropy type of an orbit in $%
M_{i}^{\varepsilon }$, then $j\leq i$ and $\left[ G_{j}\right] \leq \left[
G_{i}\right] $.

\item The distance between the submanifold $M_{j}$ and $M_{i}^{\varepsilon }$
for $j>i$ is at least $\varepsilon $.
\end{enumerate}
\end{lemma}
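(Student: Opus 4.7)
The plan is to reduce each of the five assertions to a statement about the $G$-action on the basic manifold $\widehat{W}$ constructed in Section \ref{BasicIndexAndDiagramSection}, and then transfer via the correspondence $M_i^\varepsilon = p\bigl(\widehat{\pi}^{-1}(\widehat{W}_i^\varepsilon)\bigr)$. Under this correspondence, leaf closures of $\overline{\mathcal{F}}$ on $M$ biject with $G$-orbits on $\widehat{W}$, the leaf-closure space $M\diagup\overline{\mathcal{F}}$ is identified with the orbit space $\widehat{W}\diagup G$, and, since the metric on $\widehat{M}$ is chosen so that $\widehat{\pi}$ is a Riemannian submersion while $p$ is a Riemannian submersion, transverse distances between leaf closures on $M$ match the distances between the corresponding orbits on $\widehat{W}\diagup G$. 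Thus the lemma is reduced to the corresponding stratification statements for the compact $G$-manifold $\widehat{W}$.

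Claims (1), (2), (4) and (5) then follow directly from the definition in equation (\ref{Mjepsilon}) applied on $\widehat{W}$. For (1), a descending induction on $i$ from $r$ to $0$ shows that $\widehat{W}=\coprod_{i}\widehat{W}_i^\varepsilon$ is a disjoint cover, by the nested tube construction. For (2), each $T_{\varepsilon_i}(\widehat{W}_i)$ is $G$-invariant, hence so is $\widehat{W}_i^\varepsilon$, and its image under $p\circ\widehat{\pi}^{-1}$ is therefore a union of leaf closures. For (4), the slice theorem at a point $w\in\widehat{W}_i$ shows that every orbit inside a sufficiently small tube around $\widehat{W}_i$ has isotropy type $[G_j]$ with $[G_j]\leq[G_i]$; by the enumeration convention this gives $j\leq i$. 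For (5), the inductive construction forces a separation of size at least $\varepsilon_j-\varepsilon_i\geq\varepsilon$ between $\widehat{W}_j$ and $\widehat{W}_i^\varepsilon$ whenever $j>i$, and this transfers to $M$ by the Riemannian submersion property.

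The real content is claim (3). The key input is the equivariant tubular neighborhood theorem (\cite{Bre}, \cite{Kaw}): each tube $T_{\varepsilon_i}(\widehat{W}_i)$ is $G$-equivariantly diffeomorphic to the open $\varepsilon_i$-disk bundle of the normal bundle $N\widehat{W}_i\to\widehat{W}_i$. The set $\widehat{W}_i^\varepsilon$ is then obtained from this disk bundle by removing smaller open tubes around the deeper strata $\widehat{W}_k$ with $k>i$; by the right-angle intersection property recalled just above the lemma, these removals are mutually transverse. Consequently $\widehat{W}_i^\varepsilon$ is the interior of a $G$-invariant compact manifold with corners whose codimension-one faces correspond to the various tube boundaries. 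The orbit space inherits a compatible manifold-with-corners structure via the slice theorem: near a stratum of isotropy $[G_j]$ the slice is an orthogonal $G_j$-representation, and the quotient of a slice disk by $G_j$ contributes a smooth face to the local model (with corners exactly where two distinct tubes meet), and these local models glue because $G$ acts by isometries. Triangulability is then the standard equivariant triangulability of compact $G$-manifolds with corners. Pulling back to $\widehat{M}$ by $\widehat{\pi}$ and pushing down to $M$ by $p$, which is a Riemannian submersion with compact fibers, yields the corresponding structure on $M_i^\varepsilon$ and $M_i^\varepsilon\diagup\overline{\mathcal{F}}$; the same argument applied to $\widehat{W}_i$ and $\Sigma_i^\varepsilon$ gives the last sentence of (3).

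The principal obstacle will be to verify carefully, across a single $\widehat{W}_i^\varepsilon$ that contains several isotropy types, that the slice-by-slice orbit-space structures glue smoothly across the shells between successive tubes, and that the resulting manifold-with-corners structure is isometrically compatible (using the bundle-like property of the metric, so that the quotient metric descends smoothly) with the leaf-closure equivalence on $M$. The remaining assertions are largely book-keeping once this structural claim is in hand.
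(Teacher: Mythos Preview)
Your proposal is correct and follows exactly the route the paper itself indicates: the paper does not give a detailed proof of this lemma but states just before it that the result ``is contained in \cite{Mo}, which is a consequence of Riemannian foliation theory and the decomposition theorems of $G$-manifolds (see \cite{Kaw}),'' i.e., precisely the reduction to the $G$-action on $\widehat{W}$ via the Molino correspondence that you carry out. Your sketch simply fleshes out what the paper leaves to those references, and the ingredients you invoke (slice theorem, equivariant tubular neighborhoods, right-angle intersection of strata closures, equivariant triangulability) are the standard ones from \cite{Bre} and \cite{Kaw} that the paper has already recalled in the paragraphs preceding the lemma.
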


\begin{remark}
The lemma above remains true if at each stage $T_{\varepsilon }\left(
M_{j}\right) $ is replaced by any sufficiently small open neighborhood of $%
M_{j}$ that contains $T_{\varepsilon }\left( M_{j}\right) $, that is a $%
\mathcal{F}$-saturated, and whose closure is a manifold with corners.
\end{remark}

\begin{remark}
The additional frames of $E$ have no effect on the stratification of $M$;
the corresponding $M_{i}^{\varepsilon }$, $M_{i}$ are identical whether or
not the bundle $\widehat{M}\rightarrow M$ is chosen to be the $O\left(
q\right) $-bundle or the $O\left( q\right) \times U\left( k\right) $-bundle.
However, the isotropy subgroups and basic manifold $\widehat{W}$ are
different and depend on the structure of the bundle $E$.
\end{remark}

\begin{definition}
With notation as in this section, suppose that $\left[ H\right] $ is a
maximal isotropy type with respect to the partial order $\leq $. Then the
closed, saturated submanifold $M\left( \left[ H\right] \right) $ is called a 
\textbf{minimal stratum} of the foliation $\left( M,\mathcal{F}\right) $.
\end{definition}

\subsection{Fine components and canonical isotropy bundles\label%
{canonIsotropyBundlesSection}}

First we review some definitions from \cite{BKR} and \cite{KRi} concerning
manifolds $X$ on which a compact Lie group $G$ acts by isometries with
single orbit type $\left[ H\right] $. Let $X^{H}$ be the fixed point set of $%
H$, and for $\alpha \in \pi _{0}\left( X^{H}\right) $, let $X_{\alpha }^{H}$
denote the corresponding connected component of $X^{H}$.

\begin{definition}
\label{componentRelGDefn}We denote $X_{\alpha }=GX_{\alpha }^{H}$, and $%
X_{\alpha }$ is called a \textbf{component of} $X$ \textbf{relative to} $G$.
\end{definition}

\begin{remark}
The space $X_{\alpha }$ is not necessarily connected, but it is the inverse
image of a connected component of $G\diagdown X=N\diagdown X^{H}$ under the
projection $X\rightarrow G\diagdown X$. Also, note that $X_{\alpha
}=X_{\beta }$ if there exists $n\in N$ such that $nX_{\alpha }^{H}=X_{\beta
}^{H}$. If $X$ is a closed manifold, then there are a finite number of
components of $X$ relative to $G$.
\end{remark}

We now introduce a decomposition of a $G$-bundle $E\rightarrow X$ over a $G$%
-space with single orbit type $\left[ H\right] $ that is a priori finer than
the normalized isotypical decomposition. Let $E_{\alpha }$ be the
restriction $\left. E\right\vert _{X_{\alpha }^{H}}$. For $\sigma
:H\rightarrow U\left( W_{\sigma }\right) $ an irreducible unitary
representation, let $\sigma ^{n}:H\rightarrow U\left( W_{\sigma }\right) $
be the irreducible representation defined by%
\begin{equation*}
\sigma ^{n}\left( h\right) =\sigma \left( n^{-1}hn\right) .
\end{equation*}%
Let $\widetilde{N}_{\left[ \sigma \right] }=\left\{ n\in N:\left[ \sigma ^{n}%
\right] \text{~is~equivalent~to}~\left[ \sigma \right] ~\right\} $ . If the
isotypical component 
\begin{equation*}
E_{x}^{\left[ \sigma \right] }:=i_{\sigma }\left( \mathrm{Hom}_{H}\left(
W_{\sigma },E_{x}\right) \otimes W_{\sigma }\right)
\end{equation*}%
is nontrivial, then it is invariant under the subgroup $\widetilde{N}%
_{\alpha ,\left[ \sigma \right] }\subseteq \widetilde{N}_{\left[ \sigma %
\right] }$ that leaves in addition the connected component $X_{\alpha }^{H}$
invariant; again, this subgroup has finite index in $N$. The isotypical
components transform under $n\in N$ as%
\begin{equation*}
n:E_{\alpha }^{\left[ \sigma \right] }\overset{\cong }{\longrightarrow }E_{%
\overline{n}\left( \alpha \right) }^{\left[ \sigma ^{n}\right] }~,
\end{equation*}%
where $\overline{n}$ denotes the residue class class of $n\in N$ in $%
N\diagup \widetilde{N}_{\alpha ,\left[ \sigma \right] }~$. Then a
decomposition of $E$ is obtained by `inducing up' the isotypical components $%
E_{\alpha }^{\left[ \sigma \right] }$ from $\widetilde{N}_{\alpha ,\left[
\sigma \right] }$ to $N$. That is, 
\begin{equation*}
E_{\alpha ,\left[ \sigma \right] }^{N}=N\times _{\widetilde{N}_{\alpha ,%
\left[ \sigma \right] }}E_{\alpha }^{\left[ \sigma \right] }
\end{equation*}%
is a bundle containing $\left. E_{\alpha }^{\left[ \sigma \right]
}\right\vert _{X_{\alpha }^{H}}$ . This is an $N$-bundle over $NX_{\alpha
}^{H}\subseteq X^{H}$, and a similar bundle may be formed over each distinct 
$NX_{\beta }^{H}$, with $\beta \in \pi _{0}\left( X^{H}\right) $. Further,
observe that since each bundle $E_{\alpha ,\left[ \sigma \right] }^{N}$ is
an $N$-bundle over $NX_{\alpha }^{H}$, it defines a unique $G$ bundle $%
E_{\alpha ,\left[ \sigma \right] }^{G}$.

\begin{definition}
\label{fineComponentDefinition}The $G$-bundle $E_{\alpha ,\left[ \sigma %
\right] }^{G}$ over the submanifold $X_{\alpha }$ is called a \textbf{fine
component} or the \textbf{fine component of }$E\rightarrow X$ \textbf{%
associated to }$\left( \alpha ,\left[ \sigma \right] \right) $.
\end{definition}

If $G\diagdown X$ is not connected, one must construct the fine components
separately over each $X_{\alpha }$. If $E$ has finite rank, then $E$ may be
decomposed as a direct sum of distinct fine components over each $X_{\alpha
} $. In any case, $E_{\alpha ,\left[ \sigma \right] }^{N}$ is a finite
direct sum of isotypical components over each $X_{\alpha }^{H}$.

\begin{definition}
\label{FineDecompositionDefinition}The direct sum decomposition of $\left.
E\right\vert _{X_{\alpha }}$ into subbundles $E^{b}$ that are fine
components $E_{\alpha ,\left[ \sigma \right] }^{G}$ for some $\left[ \sigma %
\right] $, written 
\begin{equation*}
\left. E\right\vert _{X_{\alpha }}=\bigoplus\limits_{b}E^{b}~,
\end{equation*}%
is called the \textbf{refined isotypical decomposition} (or \textbf{fine
decomposition}) of $\left. E\right\vert _{X_{\alpha }}$.
\end{definition}

In the case where $G\diagdown X$ is connected, the group $\pi _{0}\left(
N\diagup H\right) $ acts transitively on the connected components $\pi
_{0}\left( X^{H}\right) $, and thus $X_{\alpha }=X$. We comment that if $%
\left[ \sigma ,W_{\sigma }\right] $ is an irreducible $H$-representation
present in $E_{x}$ with $x\in X_{\alpha }^{H}$, then $E_{x}^{\left[ \sigma %
\right] }$ is a subspace of a distinct $E_{x}^{b}$ for some $b$. The
subspace $E_{x}^{b}$ also contains $E_{x}^{\left[ \sigma ^{n}\right] }$ for
every $n$ such that $nX_{\alpha }^{H}=X_{\alpha }^{H}$~.

\begin{remark}
\label{constantMultiplicityRemark}Observe that by construction, for $x\in
X_{\alpha }^{H}$ the multiplicity and dimension of each $\left[ \sigma %
\right] $ present in a specific $E_{x}^{b}$ is independent of $\left[ \sigma %
\right] $. Thus, $E_{x}^{\left[ \sigma ^{n}\right] }$ and $E_{x}^{\left[
\sigma \right] }$ have the same multiplicity and dimension if $nX_{\alpha
}^{H}=X_{\alpha }^{H}$~.
\end{remark}

\begin{remark}
The advantage of this decomposition over the isotypical decomposition is
that each $E^{b}$ is a $G$-bundle defined over all of $X_{\alpha }$, and the
isotypical decomposition may only be defined over $X_{\alpha }^{H}$.
\end{remark}

\begin{definition}
\label{adaptedDefn}Now, let $E$ be a $G$-equivariant vector bundle over $X$,
and let $E^{b}~$be a fine component as in Definition \ref%
{fineComponentDefinition} corresponding to a specific component $X_{\alpha
}=GX_{\alpha }^{H}$ of $X$ relative to $G$. Suppose that another $G$-bundle $%
W$ over $X_{\alpha }$ has finite rank and has the property that the
equivalence classes of $G_{y}$-representations present in $E_{y}^{b},y\in
X_{\alpha }$ exactly coincide with the equivalence classes of $G_{y}$%
-representations present in $W_{y}$, and that $W$ has a single component in
the fine decomposition. Then we say that $W$ is \textbf{adapted} to $E^{b}$.
\end{definition}

\begin{lemma}
\label{AdaptedToAnyBundleLemma}In the definition above, if another $G$%
-bundle $W$ over $X_{\alpha }$ has finite rank and has the property that the
equivalence classes of $G_{y}$-representations present in $E_{y}^{b},y\in
X_{\alpha }$ exactly coincide with the equivalence classes of $G_{y}$%
-representations present in $W_{y}$, then it follows that $W$ has a single
component in the fine decomposition and hence is adapted to $E^{b}$. Thus,
the last phrase in the corresponding sentence in the above definition is
superfluous.
\end{lemma}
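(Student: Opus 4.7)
The plan is to trace through the construction of the fine decomposition and observe that single-component-ness is equivalent to a statement about which irreducible isotropy representations appear together in a single fiber.

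First I would fix a base point $x \in X_{\alpha}^{H}$ and recall how the fine decomposition of an arbitrary finite-rank $G$-bundle $F \to X_{\alpha}$ is produced: one forms the isotypical decomposition of $F_{x}$ as an $H$-module, obtaining a set $S(F) \subset \widehat{H}$ of equivalence classes of irreducible $H$-representations appearing in $F_{x}$; then two classes $[\sigma],[\tau] \in S(F)$ lie in the same fine component of $F$ if and only if $[\tau] = [\sigma^{n}]$ for some $n \in N$ with $n X_{\alpha}^{H} = X_{\alpha}^{H}$. Thus the fine components of $F|_{X_{\alpha}}$ are in bijection with the orbits of the finite group $\pi_{0}(N_{\alpha}/H)$, where $N_{\alpha} \subseteq N$ is the subgroup preserving $X_{\alpha}^{H}$, acting on $S(F)$ by the rule $n \cdot [\sigma] = [\sigma^{n}]$.

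Next I would apply this to the hypotheses. By assumption $E^{b}$ is already a fine component, so by construction $S(E^{b})$ is a single orbit under the action of $\pi_{0}(N_{\alpha}/H)$ on $\widehat{H}$. The hypothesis on $W$ asserts that for every $y \in X_{\alpha}$ the irreducible $G_{y}$-representations present in $W_{y}$ coincide with those present in $E_{y}^{b}$; specializing to $y = x$ (where $G_{x} = H$) this gives $S(W) = S(E^{b})$. Therefore $S(W)$ is itself a single $\pi_{0}(N_{\alpha}/H)$-orbit, and the bijection above forces $W$ to have exactly one fine component. It then follows from Definition~\ref{adaptedDefn} that $W$ is adapted to $E^{b}$.

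The only step requiring any care is justifying that matching the representation content at the single point $x$ is equivalent to matching it fiberwise on all of $X_{\alpha}$; this is automatic because the isotropy type is constant equal to $[H]$ on $X_{\alpha}$ and the decomposition into fine components is $G$-invariant, so representation content is locally constant along $X_{\alpha}^{H}$ and transported correctly by the $N$-action between connected components. The potential obstacle is notational rather than mathematical: one must distinguish the action of $n$ on $[\sigma]$ (which uses the identification of $G_{y}$ with $H$ via conjugation) from mere set-theoretic equality of classes. Once this identification is made, the proof is an immediate application of the definition of fine component.
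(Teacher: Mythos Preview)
Your proposal is correct and follows essentially the same approach as the paper: both arguments reduce to observing that the set of irreducible $H$-representation types appearing in $W_{x}$ coincides with that of $E^{b}_{x}$, and since the latter is by construction a single orbit under the $n\mapsto[\sigma^{n}]$ action of $\{n\in N: nX_{\alpha}^{H}=X_{\alpha}^{H}\}$, so is the former. The paper's version is slightly more concrete (it writes out $nW_{x}^{[\sigma]}=W_{nx}^{[\sigma^{n}]}=W_{x}^{[\sigma^{n}]}$ explicitly, invoking rigidity), while you phrase the same step more structurally via the bijection between fine components and $\pi_{0}(N_{\alpha}/H)$-orbits; the content is the same.
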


\begin{proof}
Suppose that we choose an equivalence class $\left[ \sigma \right] $ of $H$%
-representations present in $W_{x}$, $x\in X_{\alpha }^{H}$. Let $\left[
\sigma ^{\prime }\right] $ be any other equivalence class; then, by
hypothesis, there exists $n\in N$ such that $nX_{\alpha }^{H}=X_{\alpha
}^{H} $ and $\left[ \sigma ^{\prime }\right] =\left[ \sigma ^{n}\right] $.
Then, observe that $nW_{x}^{\left[ \sigma \right] }=W_{nx}^{\left[ \sigma
^{n}\right] }=W_{x}^{\left[ \sigma ^{n}\right] }$, with the last equality
coming from the rigidity of irreducible $H$-representations. Thus, $W$ is
contained in a single fine component, and so it must have a single component
in the fine decomposition.
\end{proof}

In what follows, we show that there are naturally defined finite-dimensional
vector bundles that are adapted to any fine components. Once and for all, we
enumerate the irreducible representations $\left\{ \left[ \rho _{j},V_{\rho
_{j}}\right] \right\} _{j=1,2,...}$ of $G$. Let $\left[ \sigma ,W_{\sigma }%
\right] $ be any irreducible $H$-representation. Let $G\times _{H}W_{\sigma
} $ be the corresponding homogeneous vector bundle over the homogeneous
space $G\diagup H$. Then the $L^{2}$-sections of this vector bundle
decompose into irreducible $G$-representations. In particular, let $\left[
\rho _{j_{0}},V_{\rho _{j_{0}}}\right] $ be the equivalence class of
irreducible representations that is present in $L^{2}\left( G\diagup
H,G\times _{H}W_{\sigma }\right) $ and that has the lowest index $j_{0}$.
Then Frobenius reciprocity implies%
\begin{equation*}
0\neq \mathrm{Hom}_{G}\left( V_{\rho _{j_{0}}},L^{2}\left( G\diagup
H,G\times _{H}W_{\sigma }\right) \right) \cong \mathrm{Hom}_{H}\left( V_{%
\mathrm{\mathrm{Res}}\left( \rho _{j_{0}}\right) },W_{\sigma }\right) ,
\end{equation*}%
so that the restriction of $\rho _{j_{0}}$ to $H$ contains the $H$%
-representation $\left[ \sigma \right] $. Now, for a component $X_{\alpha
}^{H}$ of $X^{H}$, with $X_{\alpha }=GX_{\alpha }^{H}$ its component in $X$
relative to $G$, the trivial bundle%
\begin{equation*}
X_{\alpha }\times V_{\rho _{j_{0}}}
\end{equation*}%
is a $G$-bundle (with diagonal action) that contains a nontrivial fine
component $W_{\alpha ,\left[ \sigma \right] }$ containing $X_{\alpha
}^{H}\times \left( V_{\rho _{j_{0}}}\right) ^{\left[ \sigma \right] }$.

\begin{definition}
\label{canonicalIsotropyBundleDefinition}We call $W_{\alpha ,\left[ \sigma %
\right] }\rightarrow X_{\alpha }$ the \textbf{canonical isotropy }$G$\textbf{%
-bundle associated to }$\left( \alpha ,\left[ \sigma \right] \right) \in \pi
_{0}\left( X^{H}\right) \times \widehat{H}$. Observe that $W_{\alpha ,\left[
\sigma \right] }$ depends only on the enumeration of irreducible
representations of $G$, the irreducible $H$-representation $\left[ \sigma %
\right] $ and the component $X_{\alpha }^{H}$. We also denote the following
positive integers associated to $W_{\alpha ,\left[ \sigma \right] }$:

\begin{itemize}
\item $m_{\alpha ,\left[ \sigma \right] }=\dim \mathrm{Hom}_{H}\left(
W_{\sigma },W_{\alpha ,\left[ \sigma \right] ,x}\right) =\dim \mathrm{Hom}%
_{H}\left( W_{\sigma },V_{\rho _{j_{0}}}\right) $ (the \textbf{associated
multiplicity}), independent of the choice of $\left[ \sigma ,W_{\sigma }%
\right] $ present in $W_{\alpha ,\left[ \sigma \right] ,x}$ , $x\in
X_{\alpha }^{H}$ (see Remark \ref{constantMultiplicityRemark}).

\item $d_{\alpha ,\left[ \sigma \right] }=\dim W_{\sigma }$(the \textbf{%
associated representation dimension}), independent of the choice of $\left[
\sigma ,W_{\sigma }\right] $ present in $W_{\alpha ,\left[ \sigma \right]
,x} $ , $x\in X_{\alpha }^{H}$.

\item $n_{\alpha ,\left[ \sigma \right] }=\frac{\mathrm{rank}\left(
W_{\alpha ,\left[ \sigma \right] }\right) }{m_{\alpha ,\left[ \sigma \right]
}d_{\alpha ,\left[ \sigma \right] }}$ (the \textbf{inequivalence number}),
the number of inequivalent representations present in $W_{\alpha ,\left[
\sigma \right] ,x}$ , $x\in X_{\alpha }^{H}$.
\end{itemize}
\end{definition}

\begin{remark}
Observe that $W_{\alpha ,\left[ \sigma \right] }=W_{\alpha ^{\prime },\left[
\sigma ^{\prime }\right] }$ if $\left[ \sigma ^{\prime }\right] =\left[
\sigma ^{n}\right] $ for some $n\in N$ such that $nX_{\alpha }^{H}=X_{\alpha
^{\prime }}^{H}~$.
\end{remark}

The lemma below follows immediately from Lemma \ref{AdaptedToAnyBundleLemma}.

\begin{lemma}
\label{canIsotropyGbundleAdaptedExists}Given any $G$-bundle $E\rightarrow X$
and any fine component $E^{b}$ of $E$ over some $X_{\alpha }=GX_{\alpha
}^{H} $, there exists a canonical isotropy $G$-bundle $W_{\alpha ,\left[
\sigma \right] }$ adapted to $E^{b}\rightarrow X_{\alpha }$.
\end{lemma}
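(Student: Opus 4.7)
The plan is to reduce the statement to Lemma \ref{AdaptedToAnyBundleLemma} by exhibiting an explicit canonical isotropy $G$-bundle whose representation-theoretic content at fibers over $X_\alpha^H$ matches that of $E^b$. First, since $E^b$ is a nonzero fine component of $E|_{X_\alpha}$, I would pick any point $x_0 \in X_\alpha^H$ and any irreducible $H$-representation $[\sigma, W_\sigma]$ appearing in $E^b_{x_0}$. Then I form the canonical isotropy $G$-bundle $W_{\alpha,[\sigma]} \to X_\alpha$ exactly as in Definition \ref{canonicalIsotropyBundleDefinition}, using the lowest-indexed irreducible $G$-representation $[\rho_{j_0}, V_{\rho_{j_0}}]$ whose restriction to $H$ contains $[\sigma]$ (which exists by the Frobenius reciprocity argument following the definition).

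The crux of the argument is to verify that the set of equivalence classes of $H$-representations appearing in $E^b_{x_0}$ agrees with the set appearing in $W_{\alpha,[\sigma],x_0}$. For $E^b$, this set is determined by the construction in Definition \ref{fineComponentDefinition}: a fine component is obtained by inducing an isotypical component $E_\alpha^{[\sigma]}$ from the stabilizer $\widetilde{N}_{\alpha,[\sigma]}$ up to the larger group of $n \in N$ preserving $X_\alpha^H$, so that $E^b_{x_0}$ contains precisely the isotypical components $[\sigma^n]$ as $n$ runs over the coset representatives of $\widetilde{N}_{\alpha,[\sigma]}$ inside the $N$-stabilizer of $X_\alpha^H$. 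For $W_{\alpha,[\sigma],x_0}$, exactly the same inducing procedure is applied to the isotypical subspace $(V_{\rho_{j_0}})^{[\sigma]} \subset V_{\rho_{j_0}}$ inside the trivial bundle $X_\alpha \times V_{\rho_{j_0}}$; the required $[\sigma^n]$-isotypical pieces all live inside $V_{\rho_{j_0}}|_H$ because each such $n \in N \subseteq G$ acts on the $G$-module $V_{\rho_{j_0}}$ and carries $(V_{\rho_{j_0}})^{[\sigma]}$ onto $(V_{\rho_{j_0}})^{[\sigma^n]}$. Hence the two sets of $H$-representation equivalence classes at $x_0$ coincide.

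With that coincidence in hand, Lemma \ref{AdaptedToAnyBundleLemma} applies verbatim and gives both that $W_{\alpha,[\sigma]}$ consists of a single fine component and that it is adapted to $E^b$ in the sense of Definition \ref{adaptedDefn}. I expect the only mildly delicate point to be the bookkeeping in the second paragraph, namely keeping track of which $n \in N$ actually preserve the component $X_\alpha^H$ versus merely stabilizing $[\sigma]$, so that the orbit of $[\sigma]$ under $N \cap \mathrm{Stab}(X_\alpha^H)$ is what shows up on both sides; everything else is a direct appeal to the previously stated lemma.
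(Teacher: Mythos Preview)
Your proposal is correct and follows exactly the paper's approach: the paper simply states that the lemma follows immediately from Lemma~\ref{AdaptedToAnyBundleLemma}, and you have supplied the (correct) details of why the hypotheses of that lemma are met, namely that both $E^{b}$ and $W_{\alpha,[\sigma]}$ carry the same orbit $\{[\sigma^{n}]:n\in N,\ nX_{\alpha}^{H}=X_{\alpha}^{H}\}$ of $H$-representation types at each point of $X_{\alpha}^{H}$. Your caution about tracking which $n\in N$ preserve $X_{\alpha}^{H}$ is exactly the right bookkeeping point.
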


An example of another foliated bundle over a component of a stratum $M_{j}$
is the bundle defined as follows.

\begin{definition}
\label{W-tau-twistDefinition}Let $E\rightarrow M$ be any foliated vector
bundle. Let $\Sigma _{\alpha _{j}}=\widehat{\pi }\left( p^{-1}\left(
M_{j}\right) \right) $ be the corresponding component of the stratum
relative to $G$ on the basic manifold $\widehat{W}$ (see Section \ref%
{stratification}), and let $W^{\tau }\rightarrow \Sigma _{\alpha _{j}}$ be a
canonical isotropy bundle (Definition \ref{canonicalIsotropyBundleDefinition}%
). Consider the bundle $\widehat{\pi }^{\ast }W^{\tau }\otimes p^{\ast
}E\rightarrow p^{-1}\left( M_{j}\right) $, which is foliated and basic for
the lifted foliation restricted to $p^{-1}\left( M_{j}\right) $. This
defines a new foliated bundle $E^{\tau }\rightarrow M_{j}$ by letting $%
E_{x}^{\tau }$ be the space of $G$-invariant sections of $\widehat{\pi }%
^{\ast }W^{\tau }\otimes p^{\ast }E$ restricted to $p^{-1}\left( x\right) $.
We call this bundle \textbf{the} $W^{\tau }$\textbf{-twist of} $E\rightarrow
M_{j}$.
\end{definition}

\section{Desingularization of the foliation\label{desingularizationSection}}

\subsection{Topological Desingularization\label{BlowUpSection}}

Assume that $\left( M,\mathcal{F}\right) $ is a Riemannian foliation, with
principal stratum $M_{0}$ and singular strata $M_{1},...,M_{r}$
corresponding to isotropy types $\left[ G_{0}\right] $, $\left[ G_{1}\right] 
$, $\left[ G_{2}\right] $, ..., $\left[ G_{r}\right] $ on the basic
manifold, as explained in Section \ref{stratification}. We will construct a
new Riemannian foliation $\left( N,\mathcal{F}_{N}\right) $ that has a
single stratum (of type $\left[ G_{0}\right] $) and that is a branched cover
of $M$, branched over the singular strata. A distinguished fundamental
domain of $M_{0}$ in $N$ is called the desingularization of $M$ and is
denoted $\widetilde{M}$. This process closely parallels the process of
desingularizing a $G$-manifold, which is described in \cite{BKR}.

Recall the setup from Section \ref{stratification}. We are given $%
E\rightarrow M$, a foliated Hermitian vector bundle over $M$, and the bundle 
$\widehat{M}\overset{p}{\longrightarrow }M$ is the bundle of ordered pairs $%
\left( \alpha ,\beta \right) $ with structure group $G=O\left( q\right)
\times U\left( k\right) $, with $\alpha $ a orthonormal transverse frame and 
$\beta $ an orthonormal frame of $E$ with respect to the Hermitian inner
product on $E$, as in Section \ref{BasicIndexAndDiagramSection}; in many
cases the principal bundle may be reduced to a bundle with smaller structure
group. The foliation $\mathcal{F}$ lifts to a foliation $\widehat{\mathcal{F}%
}$ on $\widehat{M}$, and the lifted foliation is transversally
parallelizable. We chose the natural metric on $\widehat{M}$ as in Section %
\ref{BasicIndexAndDiagramSection}. By Molino theory (\cite{Mo}), the leaf
closures of $\widehat{\mathcal{F}}$ are diffeomorphic, have no holonomy, and
form a Riemannian fiber bundle $\widehat{M}\overset{\widehat{\pi }}{%
\longrightarrow }\widehat{W}$ over the basic manifold $\widehat{W}$, on
which the group $G$ acts by isometries. The $G$-orbits on $\widehat{W}$ and
leaf closures of $\left( M,\mathcal{F}\right) $ are identified via the
correspondence 
\begin{equation*}
p\left( \widehat{\pi }^{-1}\left( G\text{--orbit on }\widehat{W}\right)
\right) =\text{leaf closure of }\left( M,\mathcal{F}\right) .
\end{equation*}

A sequence of modifications is used to construct $N$ and $\widetilde{M}%
\subset N$. Let $M_{j}$ be a minimal stratum. Let $T_{\varepsilon }\left(
M_{j}\right) $ denote a tubular neighborhood of radius $\varepsilon $ around 
$M_{j}$, with $\varepsilon $ chosen sufficiently small so that all leaf
closures in $T_{\varepsilon }\left( M_{j}\right) \setminus M_{j}$ correspond
to isotropy types $\left[ G_{k}\right] $, where $\left[ G_{k}\right] <\left[
G_{j}\right] $. Let 
\begin{equation*}
N^{1}=\left( M\setminus T_{\varepsilon }\left( M_{j}\right) \right) \cup
_{\partial T_{\varepsilon }\left( M_{j}\right) }\left( M\setminus
T_{\varepsilon }\left( M_{j}\right) \right)
\end{equation*}%
be the manifold constructed by gluing two copies of $\left( M\setminus
T_{\varepsilon }\left( M_{j}\right) \right) $ smoothly along the boundary.
Since the $T_{\varepsilon }\left( M_{j}\right) $ is saturated (a union of
leaves), the foliation lifts to $N^{1}$. Note that the strata of the
foliation $\mathcal{F}^{1}$ on $N^{1}$ correspond to strata in $M\setminus
T_{\varepsilon }\left( M_{j}\right) $. If $M_{k}\cap \left( M\setminus
T_{\varepsilon }\left( M_{j}\right) \right) $ is nontrivial, then the
stratum corresponding to isotropy type $\left[ G_{k}\right] $ on $N^{1}$ is 
\begin{equation*}
N_{k}^{1}=\left( M_{k}\cap \left( M\setminus T_{\varepsilon }\left(
M_{j}\right) \right) \right) \cup _{\left( M_{k}\cap \partial T_{\varepsilon
}\left( M_{j}\right) \right) }\left( M_{k}\cap \left( M\setminus
T_{\varepsilon }\left( M_{j}\right) \right) \right) .
\end{equation*}%
Thus, $\left( N^{1},\mathcal{F}^{1}\right) $ is a foliation with one fewer
stratum than $\left( M,\mathcal{F}\right) $, and $M\setminus M_{j}$ is
diffeomorphic to one copy of $\left( M\setminus T_{\varepsilon }\left(
M_{j}\right) \right) $, denoted $\widetilde{M}^{1}$ in $N^{1}$. One may
radially modify metrics so that a bundle-like metric on $\left( M,\mathcal{F}%
\right) $ transforms to a bundle-like metric on $\left( N^{1},\mathcal{F}%
^{1}\right) $. In fact, $N^{1}$ is a branched double cover of $M$, branched
over $M_{j}$. If the leaf closures of $\left( N^{1},\mathcal{F}^{1}\right) $
correspond to a single orbit type, then we set $N=N^{1}$ and $\widetilde{M}=%
\widetilde{M}^{1}$. If not, we repeat the process with the foliation $\left(
N^{1},\mathcal{F}^{1}\right) $ to produce a new Riemannian foliation $\left(
N^{2},\mathcal{F}^{2}\right) $ with two fewer strata than $\left( M,\mathcal{%
F}\right) $ and that is a $4$-fold branched cover of $M$. Again, $\widetilde{%
M}^{2}$ is a fundamental domain of $\widetilde{M}^{1}\setminus \left\{ \text{%
a minimal stratum}\right\} $, which is a fundamental domain of $M$ with two
strata removed. We continue until $\left( N,\mathcal{F}_{N}\right) =\left(
N^{r},\mathcal{F}^{r}\right) $ is a Riemannian foliation with all leaf
closures corresponding to orbit type $\left[ G_{0}\right] $ and is a $2^{r}$%
-fold branched cover of $M$, branched over $M\setminus M_{0}$. We set $%
\widetilde{M}=\widetilde{M}^{r}$, which is a fundamental domain of $M_{0}$
in $N$.

Further, one may independently desingularize $M_{\geq j}$, since this
submanifold is itself a closed $G$-manifold. If $M_{\geq j}$ has more than
one connected component, we may desingularize all components simultaneously.
The isotropy type corresponding to all leaf closures of $\widetilde{M_{\geq
j}}$ is $\left[ G_{j}\right] $, and $\widetilde{M_{\geq j}}\diagup \overline{%
\mathcal{F}}$ is a smooth (open) manifold.

\subsection{Modification of the metric and differential operator}

We now more precisely describe the desingularization. If $\left( M,\mathcal{F%
}\right) $ is equipped with a basic, transversally elliptic differential
operator on sections of a foliated vector bundle over $M$, then this data
may be pulled back to the desingularization $\widetilde{M}$. Given the
bundle and operator over $N^{j}$, simply form the invertible double of the
operator on $N^{j+1}$, which is the double of the manifold with boundary $%
N^{j}\setminus T_{\varepsilon }\left( \Sigma \right) $, where $\Sigma $ is a
minimal stratum on $N^{j}$.

Specifically, we modify the bundle-like metric radially so that there exists
sufficiently small $\varepsilon >0$ such that the (saturated) tubular
neighborhood $B_{4\varepsilon }\Sigma $ of $\Sigma $ in $N^{j}$ is isometric
to a ball of radius $4\varepsilon $ in the normal bundle $N\Sigma $. In
polar coordinates, this metric is $ds^{2}=dr^{2}+d\sigma ^{2}+r^{2}d\theta
_{\sigma }^{2}$, with $r\in \left( 0,4\varepsilon \right) $, $d\sigma ^{2}$
is the metric on $\Sigma $, and $d\theta _{\sigma }^{2}$ is the metric on $%
S\left( N_{\sigma }\Sigma \right) $, the unit sphere in $N_{\sigma }\Sigma $%
; note that $d\theta _{\sigma }^{2}$ is isometric to the Euclidean metric on
the unit sphere. We simply choose the horizontal metric on $B_{4\varepsilon
}\Sigma $ to be the pullback of the metric on the base $\Sigma $, the fiber
metric to be Euclidean, and we require that horizontal and vertical vectors
be orthogonal. We do not assume that the horizontal distribution is
integrable. We that the metric constructed above is automatically
bundle-like for the foliation.

Next, we replace $r^{2}$ with $f\left( r\right) =\left[ \psi \left( r\right) %
\right] ^{2}$ in the expression for the metric, where $\psi \left( r\right) $
is increasing, is a positive constant for $0\leq r\leq \varepsilon $, and $%
\psi \left( r\right) =r$ for $2\varepsilon \leq r\leq 3\varepsilon $. Then
the metric is cylindrical for $r<\varepsilon $.

In our description of the modification of the differential operator, we will
need the notation for the (external) product of differential operators.
Suppose that $F\hookrightarrow X\overset{\pi }{\rightarrow }B$ is a fiber
bundle that is locally a metric product. Given an operator $A_{1,x}:\Gamma
\left( \pi ^{-1}\left( x\right) ,E_{1}\right) \rightarrow \Gamma \left( \pi
^{-1}\left( x\right) ,F_{1}\right) $ that is locally given as a differential
operator $A_{1}:\Gamma \left( F,E_{1}\right) \rightarrow \Gamma \left(
F,F_{1}\right) $ and $A_{2}:\Gamma \left( B,E_{2}\right) \rightarrow \Gamma
\left( B,F_{2}\right) $ on Hermitian bundles, we define the product 
\begin{equation*}
A_{1,x}\ast A_{2}:\Gamma \left( X,\left( E_{1}\boxtimes E_{2}\right) \oplus
\left( F_{1}\boxtimes F_{2}\right) \right) \rightarrow \Gamma \left(
X,\left( F_{1}\boxtimes E_{2}\right) \oplus \left( E_{1}\boxtimes
F_{2}\right) \right)
\end{equation*}%
as the unique linear operator that satisfies locally 
\begin{equation*}
A_{1,x}\ast A_{2}=\left( 
\begin{array}{ll}
A_{1}\boxtimes \mathbf{1} & -\mathbf{1}\boxtimes A_{2}^{\ast } \\ 
\mathbf{1}\boxtimes A_{2} & A_{1}^{\ast }\boxtimes \mathbf{1}%
\end{array}%
\right)
\end{equation*}%
on sections of 
\begin{equation*}
\left( 
\begin{array}{l}
E_{1}\boxtimes E_{2} \\ 
F_{1}\boxtimes F_{2}%
\end{array}%
\right)
\end{equation*}%
of the form $\left( 
\begin{array}{l}
u_{1}\boxtimes u_{2} \\ 
v_{1}\boxtimes v_{2}%
\end{array}%
\right) $, where $u_{1}\in \Gamma \left( F,E_{1}\right) $, $u_{2}\in \Gamma
\left( B,E_{2}\right) $, $v_{1}\in \Gamma \left( F,F_{1}\right) $, $v_{2}\in
\Gamma \left( B,E_{2}\right) $. This coincides with the product in various
versions of K-theory (see, for example, \cite{A}, \cite[pp. 384ff]{LM}),
which is used to define the Thom Isomorphism in vector bundles.

Let $D=D^{+}:\Gamma \left( N^{j},E^{+}\right) \rightarrow \Gamma \left(
N^{j},E^{-}\right) $ be the given first order, transversally elliptic, $%
\mathcal{F}^{j}$-basic differential operator. Let $\Sigma \ $be a minimal
stratum of $N^{j}$. We assume for the moment that $\Sigma $ has codimension
at least two. We modify the bundle radially so that the foliated bundle $E$
over $B_{4\varepsilon }\left( \Sigma \right) $ is a pullback of the bundle $%
\left. E\right\vert _{\Sigma }\rightarrow \Sigma $. We assume that near $%
\Sigma $, after a foliated homotopy $D^{+}$ can be written on $%
B_{4\varepsilon }\left( \Sigma \right) $ locally as the product 
\begin{equation}
D^{+}=\left( D_{N}\ast D_{\Sigma }\right) ^{+},  \label{Dsplitting}
\end{equation}%
where $D_{\Sigma }\ $is a transversally elliptic, basic, first order
operator on the stratum $\left( \Sigma ,\left. \mathcal{F}\right\vert
_{\Sigma }\right) $, and $D_{N}$ is a basic, first order operator on $%
B_{4\varepsilon }\left( \Sigma \right) $ that is elliptic on the fibers. If $%
r$ is the distance from $\Sigma $, we write $D_{N}$ in polar coordinates as%
\begin{equation*}
D_{N}=Z\left( \nabla _{\partial _{r}}^{E}+\frac{1}{r}D^{S}\right)
\end{equation*}%
where $Z=-i\sigma \left( D_{N}\right) \left( \partial _{r}\right) $ is a
local bundle isomorphism and the map $D^{S}$ is a purely first order
operator that differentiates in the unit normal bundle directions tangent to 
$S_{x}\Sigma $.

We modify the operator $D_{N}$ on each Euclidean fiber of $N\Sigma \overset{%
\pi }{\rightarrow }\Sigma $ by adjusting the coordinate $r$ and function $%
\frac{1}{r}$ so that $D_{N}\ast D_{\Sigma }$ is converted to an operator on
a cylinder; see \cite[Section 6.3.2]{BKR} for the precise details. The
result is a $G$-manifold $\widetilde{M}^{j}$ with boundary $\partial 
\widetilde{M}^{j}$, a $G$-vector bundle $\widetilde{E}^{j}$, and the induced
operator $\widetilde{D}^{j}$, all of which locally agree with the original
counterparts outside $B_{\varepsilon }\left( \Sigma \right) $. We may double 
$\widetilde{M}^{j}$ along the boundary $\partial \widetilde{M}^{j}$ and
reverse the chirality of $\widetilde{E}^{j}$ as described in \cite[Ch. 9]%
{Bo-Wo}. Doubling produces a closed manifold $N^{j}$ with foliation $%
\mathcal{F}^{j}$, a foliated bundle $E^{j}$, and a first-order transversally
elliptic differential operator $D^{j}$. This process may be iterated until
all leaf closures are principal. The case where some strata have codimension 
$1$ is addressed in the following paragraphs.\label{codimOneSection}

We now give the definitions for the case when there is a minimal stratum $%
\Sigma $ of codimension $1$. Only the changes to the argument are noted.
This means that the isotropy subgroup $H$ corresponding to $\Sigma $
contains a principal isotropy subgroup of index two. If $r$ is the distance
from $\Sigma $, then $D_{N}$ has the form%
\begin{equation*}
D_{N}=Z\left( \nabla _{\partial _{r}}^{E}+\frac{1}{r}D^{S}\right) =Z\nabla
_{\partial _{r}}^{E}
\end{equation*}%
where $Z=-i\sigma \left( D_{N}\right) \left( \partial _{r}\right) $ is a
local bundle isomorphism and the map $D^{S}=0$.

In this case, there is no reason to modify the metric inside $B_{\varepsilon
}\left( \Sigma \right) $. The \textquotedblleft
desingularization\textquotedblright\ of $M$ along $\Sigma $ is the manifold
with boundary $\widetilde{M}=M\diagdown B_{\delta }\left( \Sigma \right) $
for some $0<\delta <\varepsilon $; the singular stratum is replaced by the
boundary $\partial \widetilde{M}=S_{\delta }\left( \Sigma \right) $, which
is a two-fold cover of $\Sigma $ and whose normal bundle is necessarily
oriented (via $\partial _{r}$). The double $M^{\prime }$ is identical to the
double of $\widetilde{M}$ along its boundary, and $M^{\prime }$ contains one
less stratum.

\subsection{Discussion of operator product assumption\label%
{productAssumptionSection}}

We now explain specific situations that guarantee that, after a foliated
homotopy, $D^{+}$ may be written locally as a product of operators as in (%
\ref{Dsplitting}) over the tubular neighborhhood $B_{4\varepsilon }\left(
\Sigma \right) $ over a singular stratum $\Sigma $. This demonstrates that
this assumption is not overly restrictive. We also emphasize that one might
think that this assumption places conditions on the curvature of the normal
bundle $N\Sigma $; however, this is not the case for the following reason.
The condition is on the foliated homotopy class of the principal transverse
symbol of $D$. The curvature of the bundle only effects the zeroth order
part of the symbol. For example, if $Y\rightarrow X$ is any fiber bundle
over a spin$^{c}$ manifold $X$ with fiber $F$, then a Dirac-type operator $D$
on $Y$ has the form $D=\partial _{X}\ast D_{F}+Z$, where $D_{F}$ is a family
of fiberwise Dirac-type operators, $\partial _{X}$ is the spin$^{c}$ Dirac
operator on $X$, and $Z$ is a bundle endomorphism.

First, we show that if $D^{+}$ is a transversal Dirac operator at points of $%
\Sigma $, and if either $\Sigma $ is spin$^{c}$ or its normal bundle $%
N\Sigma \rightarrow \Sigma $ is (fiberwise) spin$^{c}$, then it has the
desired form. Moreover, we also remark that certain operators, like those
resembling transversal de Rham operators, always satisfy this splitting
condition with no assumptions on $\Sigma $.

Let $N\mathcal{F}$ be normal bundle of the foliation$\mathcal{F}_{\Sigma
}=\left. \mathcal{F}\right\vert _{\Sigma }$, and let $N\Sigma $ be the
normal bundle of $\Sigma $ in $M$. Then the principal transverse symbol of $%
D^{+}$ (evaluated at $\xi \in N_{x}^{\ast }\mathcal{F}_{\Sigma }\oplus
N_{x}^{\ast }\Sigma $) at points $x\in \Sigma $ takes the form of a constant
multiple of Clifford multiplication. That is, we assume there is an action $%
c $ of $\mathbb{C}\mathrm{l}\left( N\mathcal{F}_{\Sigma }\oplus N\Sigma
\right) $ on $E$ and a Clifford connection $\nabla $ on $E$ such that the
local expression for $D$ is given by the composition%
\begin{equation*}
\Gamma \left( E\right) \overset{\nabla }{\rightarrow }\Gamma \left( E\otimes
T^{\ast }M\right) \overset{\mathrm{proj}}{\rightarrow }\Gamma \left(
E\otimes \left( N^{\ast }\mathcal{F}_{\Sigma }\oplus N^{\ast }\Sigma \right)
\right) \overset{\cong }{\rightarrow }\Gamma \left( E\otimes \left( N%
\mathcal{F}_{\Sigma }\oplus N\Sigma \right) \right) \overset{c}{\rightarrow }%
\Gamma \left( E\right) .
\end{equation*}

The principal transverse symbol $\sigma \left( D^{+}\right) $ at $\xi
_{x}\in T_{x}^{\ast }\Sigma $ is%
\begin{equation*}
\sigma \left( D^{+}\right) \left( \xi _{x}\right) =\sum_{j=1}^{q^{\prime
}}ic\left( \xi _{x}\right) :E_{x}^{+}\rightarrow E_{x}^{-}
\end{equation*}%
Suppose $N\Sigma $ is spin$^{c}$; then there exists a vector bundle $%
S=S^{+}\oplus S^{-}\rightarrow \Sigma $ that is an irreducible
representation of $\mathbb{C}\mathrm{l}\left( N\Sigma \right) $ over each
point of $\Sigma $, and we let $E^{\Sigma }=\mathrm{End}_{\mathbb{C} \mathrm{%
l}\left( N\Sigma \right) }\left( E\right) $ and have%
\begin{equation*}
E\cong S\widehat{\otimes }E^{\Sigma }
\end{equation*}%
as a graded tensor product, such that the action of $\mathbb{C}\mathrm{l}%
\left( N\mathcal{F}_{\Sigma }\oplus N\Sigma \right) \cong \mathbb{C}\mathrm{l%
}\left( N\Sigma \right) \widehat{\otimes }\mathbb{C}\mathrm{l}\left( N%
\mathcal{F}_{\Sigma }\right) $ (as a graded tensor product) on $E^{+}$
decomposes as%
\begin{equation*}
\left( 
\begin{array}{cc}
c\left( x\right) \otimes \mathbf{1} & -\mathbf{1}\otimes c\left( y\right)
^{\ast } \\ 
\mathbf{1}\otimes c\left( y\right) & c\left( x\right) ^{\ast }\otimes 
\mathbf{1}%
\end{array}%
\right) :\left( 
\begin{array}{c}
S^{+}\otimes E^{\Sigma +} \\ 
S^{-}\otimes E^{\Sigma -}%
\end{array}%
\right) \rightarrow \left( 
\begin{array}{c}
S^{-}\otimes E^{\Sigma +} \\ 
S^{+}\otimes E^{\Sigma -}%
\end{array}%
\right)
\end{equation*}%
(see \cite{ASi}, \cite{LM}). If we let the operator $\partial ^{N}$ denote
the spin$^{c}$ transversal Dirac operator on sections of $\pi ^{\ast
}S\rightarrow N\Sigma $, and let $D_{\Sigma }$ be the transversal Dirac
operator defined by the action of $\mathbb{C}\mathrm{l}\left( N\mathcal{F}%
_{\Sigma }\right) $ on $E^{\Sigma }$, then we have%
\begin{equation*}
D^{+}=\left( \partial ^{N}\ast D_{\Sigma }\right) ^{+}
\end{equation*}%
up to zero$^{\mathrm{th}}$ order terms (coming from curvature of the fiber).

The same argument works if instead we have that the bundle $N\mathcal{F}%
_{\Sigma }\rightarrow \Sigma $ is spin$^{c}$. In this case a spin$^{c}$
Dirac operator $\partial ^{\Sigma }$ on sections of a complex spinor bundle
over $\Sigma $ is transversally elliptic to the foliation $\mathcal{F}%
_{\Sigma }$, and we have a formula of the form%
\begin{equation*}
D^{+}=\left( D_{N}\ast \partial ^{\Sigma }\right) ^{+},
\end{equation*}%
again up to zeroth order terms.

Even if $N\Sigma \rightarrow \Sigma $ and $N\mathcal{F}_{\Sigma }\rightarrow
\Sigma $ are not spin$^{c}$, many other first order operators have
splittings as in Equation (\ref{Dsplitting}). For example, if $D^{+}$ is a
transversal de Rham operator from even to odd forms, then $D^{+}$ is the
product of de Rham operators in the $N\Sigma $ and $N\mathcal{F}_{\Sigma }$
directions.

In \cite{GLott}, where a formula for the basic index is derived, the
assumptions dictate that every isotropy subgroup is a connected torus, which
implies that $N\Sigma \rightarrow \Sigma $ automatically carries a vertical
almost complex structure and is thus spin$^{c}$, so that the splitting
assumption is automatically satisfied in their paper as well.

\section{The equivariant index theorem\label{equivIndexSection}}

We review some facts about equivariant index theory and in particular make
note of \cite[Theorem 9.2]{BKR}. Suppose that a compact Lie group $G$ acts
by isometries on a compact, connected Riemannian manifold $\widehat{W}$. In
the following sections of the paper, we will be particularly interested in
the case where $\widehat{W}$ is the basic manifold associated to $\left( M,%
\mathcal{F}\right) $ and $G=O\left( q\right) $. Let $E=E^{+}\oplus E^{-}$ be
a graded, $G$-equivariant Hermitian vector bundle over $\widehat{W}$. We
consider a first order $G$-equivariant differential operator $D=D^{+}:$ $%
\Gamma \left( \widehat{W},E^{+}\right) \rightarrow \Gamma \left( \widehat{W}%
,E^{-}\right) $ that is transversally elliptic, and let $D^{-}$ be the
formal adjoint of $D^{+}$.

The group $G$ acts on $\Gamma \left( \widehat{W},E^{\pm }\right) $ by $%
\left( gs\right) \left( x\right) =g\cdot s\left( g^{-1}x\right) $, and the
(possibly infinite-dimensional) subspaces $\ker \left( D^{+}\right) $ and $%
\ker \left( D^{-}\right) $ are $G$-invariant subspaces. Let $\rho
:G\rightarrow U\left( V_{\rho }\right) $ be an irreducible unitary
representation of $G$, and let $\chi _{\rho }=\mathrm{tr}\left( \rho \right) 
$ denote its character. Let $\Gamma \left( \widehat{W},E^{\pm }\right)
^{\rho }$ be the subspace of sections that is the direct sum of the
irreducible $G$-representation subspaces of $\Gamma \left( \widehat{W}%
,E^{\pm }\right) $ that are unitarily equivalent to the representation $\rho 
$. It can be shown that the extended operators 
\begin{equation*}
\overline{D}_{\rho ,s}:H^{s}\left( \Gamma \left( \widehat{W},E^{+}\right)
^{\rho }\right) \rightarrow H^{s-1}\left( \Gamma \left( \widehat{W}%
,E^{-}\right) ^{\rho }\right)
\end{equation*}%
are Fredholm and independent of $s$, so that each irreducible representation
of $G$ appears with finite multiplicity in $\ker D^{\pm }$ (see \cite{BKR}).
Let $a_{\rho }^{\pm }\in \mathbb{Z}_{\geq 0}$ be the multiplicity of $\rho $
in $\ker \left( D^{\pm }\right) $.

The study of index theory for such transversally elliptic operators was
initiated by M. Atiyah and I. Singer in the early 1970s (\cite{A}). The
virtual representation-valued index of $D$ is given by 
\begin{equation*}
\mathrm{ind}^{G}\left( D\right) :=\sum_{\rho }\left( a_{\rho }^{+}-a_{\rho
}^{-}\right) \left[ \rho \right] ,
\end{equation*}%
where $\left[ \rho \right] $ denotes the equivalence class of the
irreducible representation $\rho $. The index multiplicity is 
\begin{equation*}
\mathrm{ind}^{\rho }\left( D\right) :=a_{\rho }^{+}-a_{\rho }^{-}=\frac{1}{%
\dim V_{\rho }}\mathrm{ind}\left( \left. D\right\vert _{\Gamma \left( 
\widehat{W},E^{+}\right) ^{\rho }\rightarrow \Gamma \left( \widehat{W}%
,E^{-}\right) ^{\rho }}\right) .
\end{equation*}%
In particular, if $\rho _{0}$ is the trivial representation of $G$, then 
\begin{equation*}
\mathrm{ind}^{\rho _{0}}\left( D\right) =\mathrm{ind}\left( \left.
D\right\vert _{\Gamma \left( \widehat{W},E^{+}\right) ^{G}\rightarrow \Gamma
\left( \widehat{W},E^{-}\right) ^{G}}\right) ,
\end{equation*}%
where the superscript $G$ implies restriction to $G$-invariant sections.

There is a clear relationship between the index multiplicities and Atiyah's
equivariant distribution-valued index $\mathrm{ind}_{g}\left( D\right) $;
the multiplicities determine the distributional index, and vice versa. The
space $\Gamma \left( \widehat{W},E^{\pm }\right) ^{\rho }$ is a subspace of
the $\lambda _{\rho }$-eigenspace of $C$. The virtual character $\mathrm{ind}%
_{g}\left( D\right) $ is given by (see \cite{A}) 
\begin{eqnarray*}
\mathrm{ind}_{g}\left( D\right) &:&=\text{\textquotedblleft }\mathrm{tr}%
\left( \left. g\right\vert _{\ker D^{+}}\right) -\mathrm{tr}\left( \left.
g\right\vert _{\ker D^{-}}\right) \text{\textquotedblright } \\
&=&\sum_{\rho }\mathrm{ind}^{\rho }\left( D\right) \chi _{\rho }\left(
g\right) .
\end{eqnarray*}%
Note that the sum above does not in general converge, since $\ker D^{+}$ and 
$\ker D^{-}$ are in general infinite-dimensional, but it does make sense as
a distribution on $G$. That is, if $dg$ is the normalized, biinvariant Haar
measure on $G$, and if $\phi =\beta +\sum c_{\rho }\chi _{\rho }\in
C^{\infty }\left( G\right) $, with $\beta $ orthogonal to the subspace of
class functions on $G$, then 
\begin{eqnarray*}
\mathrm{ind}_{\ast }\left( D\right) \left( \phi \right) &=&\text{%
\textquotedblleft }\int_{G}\phi \left( g\right) ~\overline{\mathrm{ind}%
_{g}\left( D\right) }~dg\text{\textquotedblright } \\
&=&\sum_{\rho }\mathrm{ind}^{\rho }\left( D\right) \int \phi \left( g\right)
~\overline{\chi _{\rho }\left( g\right) }~dg=\sum_{\rho }\mathrm{ind}^{\rho
}\left( D\right) c_{\rho },
\end{eqnarray*}%
an expression which converges because $c_{\rho }$ is rapidly decreasing and $%
\mathrm{ind}^{\rho }\left( D\right) $ grows at most polynomially as $\rho $
varies over the irreducible representations of $G$. From this calculation,
we see that the multiplicities determine Atiyah's distributional index.
Conversely, let $\alpha :G\rightarrow U\left( V_{\alpha }\right) $ be an
irreducible unitary representation. Then 
\begin{equation*}
\mathrm{ind}_{\ast }\left( D\right) \left( \chi _{\alpha }\right)
=\sum_{\rho }\mathrm{ind}^{\rho }\left( D\right) \int \chi _{\alpha }\left(
g\right) \overline{\chi _{\rho }\left( g\right) }\,dg=\mathrm{ind}^{\alpha
}D,
\end{equation*}%
so that complete knowledge of the equivariant distributional index is
equivalent to knowing all of the multiplicities $\mathrm{ind}^{\rho }\left(
D\right) $. Because the operator $\left. D\right\vert _{\Gamma \left( 
\widehat{W},E^{+}\right) ^{\rho }\rightarrow \Gamma \left( \widehat{W}%
,E^{-}\right) ^{\rho }}$ is Fredholm, all of the indices $\mathrm{ind}%
^{G}\left( D\right) $ , $\mathrm{ind}_{g}\left( D\right) $, and $\mathrm{ind}%
^{\rho }\left( D\right) $ depend only on the stable homotopy class of the
principal transverse symbol of $D$.

The equivariant index theorem (\cite[Theorem 9.2]{BKR}) expresses $\mathrm{%
ind}^{\rho }\left( D\right) $ as a sum of integrals over the different
strata of the action of $G$ on $\widehat{W}$, and it involves the eta
invariant of associated equivariant elliptic operators on spheres normal to
the strata. The result is%
\begin{eqnarray*}
\mathrm{ind}^{\rho }\left( D\right) &=&\int_{G\diagdown \widetilde{\widehat{W%
}_{0}}}A_{0}^{\rho }\left( x\right) ~\widetilde{\left\vert dx\right\vert }%
~+\sum_{j=1}^{r}\beta \left( \Sigma _{\alpha _{j}}\right) ~, \\
\beta \left( \Sigma _{\alpha _{j}}\right) &=&\frac{1}{2\dim V_{\rho }}%
\sum_{b\in B}\frac{1}{n_{b}\mathrm{rank~}W^{b}}{\Huge (}-\eta \left(
D_{j}^{S+,b}\right) \\
&&+h\left( D_{j}^{S+,b}\right) {\Huge )}\int_{G\diagdown \widetilde{\Sigma
_{\alpha _{j}}}}A_{j,b}^{\rho }\left( x\right) ~\widetilde{\left\vert
dx\right\vert }~,
\end{eqnarray*}%
(The notation is explained in \cite{BKR}; the integrands $A_{0}^{\rho
}\left( x\right) $ and $A_{j,b}^{\rho }\left( x\right) $ are the familar
Atiyah-Singer integrands corresponding to local heat kernel supertraces of
induced elliptic operators over closed manifolds.)

\section{The basic index theorem\label{basicIndexThmSection}}

Suppose that $E$ is a foliated $\mathbb{C}\mathrm{l}\left( Q\right) $ module
with basic $\mathbb{C}\mathrm{l}\left( Q\right) $ connection $\nabla ^{E}$
over a Riemannian foliation $\left( M,\mathcal{F}\right) $. Let

\begin{equation*}
D_{b}^{E}:\Gamma _{b}\left( E^{+}\right) \rightarrow \Gamma _{b}\left(
E^{-}\right)
\end{equation*}%
be the corresponding basic Dirac operator, with basic index $\mathrm{ind}%
_{b}\left( D_{b}^{E}\right) $.

In what follows, if $U$ denotes an open subset of a stratum of $\left( M,%
\mathcal{F}\right) $, $U^{\prime }$ denotes the desingularization of $U$
very similar to that in Section \ref{desingularizationSection}, and $%
\widetilde{U}$ denotes the fundamental domain of $U$ inside $U^{\prime }$.
We assume that near each component $M_{j}$ of a singular stratum of $\left(
M,\mathcal{F}\right) $, $D_{b}^{E}$ is homotopic (through basic,
transversally elliptic operators) to the product $D_{N}\ast D_{M_{j}}$,
where $D_{N}$ is an $\mathcal{F}$-basic, first order differential operator
on a tubular neighborhood of $\Sigma _{\alpha _{j}}$ that is elliptic and
Zhas constant coefficients on the fibers and $D_{M_{j}}\ $is a global
transversally elliptic, basic, first order operator on the Riemannian
foliation $\left( M_{j},\mathcal{F}\right) $. In polar coordinates, the
fiberwise elliptic operator $D_{N}$ may be written 
\begin{equation*}
D_{N}=Z_{j}\left( \nabla _{\partial _{r}}^{E}+\frac{1}{r}D_{j}^{S}\right) ~,
\end{equation*}%
where $r$ is the distance from $M_{j}$, where $Z_{j}$ is a local bundle
isometry (dependent on the spherical parameter), the map $D_{j}^{S}$ is a
family of purely first order operators that differentiates in directions
tangent to the unit normal bundle of $M_{j}$.

\begin{theorem}
(Basic Index Theorem for Riemannian foliations)\label{basicIndexTheorem} Let 
$M_{0}$ be the principal stratum of the Riemannian foliation $\left( M,%
\mathcal{F}\right) $, and let $M_{1}$, ... , $M_{r}$ denote all the
components of all singular strata, corresponding to $O\left( q\right) $%
-isotropy types $\left[ G_{1}\right] $, ... ,$\left[ G_{r}\right] $ on the
basic manifold. With notation as in the discussion above, we have 
\begin{align*}
\mathrm{ind}_{b}\left( D_{b}^{E}\right) & =\int_{\widetilde{M_{0}}\diagup 
\overline{\mathcal{F}}}A_{0,b}\left( x\right) ~\widetilde{\left\vert
dx\right\vert }+\sum_{j=1}^{r}\beta \left( M_{j}\right) ~,~ \\
\beta \left( M_{j}\right) & =\frac{1}{2}\sum_{\tau }\frac{1}{n_{\tau }%
\mathrm{rank~}W^{\tau }}\left( -\eta \left( D_{j}^{S+,\tau }\right) +h\left(
D_{j}^{S+,\tau }\right) \right) \int_{\widetilde{M_{j}}\diagup \overline{%
\mathcal{F}}}A_{j,b}^{\tau }\left( x\right) ~\widetilde{\left\vert
dx\right\vert }\ ,
\end{align*}%
%
where the sum is over all components of singular strata and over all
canonical isotropy bundles $W^{\tau }$, only a finite number of which yield
nonzero $A_{j,b}^{\tau }$, and where

\begin{enumerate}
\item $A_{0,b}\left( x\right) $ is the Atiyah-Singer integrand, the local
supertrace of the ordinary heat kernel associated to the elliptic operator
induced from $\widetilde{D_{b}^{E}}$ (a desingularization of $D_{b}^{E}$) on
the quotient $\widetilde{M_{0}}\diagup \overline{\mathcal{F}}$, where the
bundle $E$ is replaced by the space of basic sections of over each leaf
closure;

\item $\eta \left( D_{j}^{S+,b}\right) $ and $h\left( D_{j}^{S+,b}\right) $
are the equivariant eta invariant and dimension of the equivariant kernel of
the $G_{j}$-equivariant operator $D_{j}^{S+,b}$ (defined in a similar way as
in \cite[formulas (6.3), (6.4), (6.7)]{BKR});

\item $A_{j,b}^{\tau }\left( x\right) $ is the local supertrace of the
ordinary heat kernel associated to the elliptic operator induced from $%
\left( \mathbf{1}\otimes D_{M_{j}}\right) ^{\prime }$ (blown-up and doubled
from $\mathbf{1}\otimes D_{M_{j}}$, the twist of $D_{M_{j}}$ by the
canonical isotropy bundle $W^{\tau }$ from Definition \ref%
{W-tau-twistDefinition}) on the quotient $\widetilde{M_{j}}\diagup \overline{%
\mathcal{F}}$, where the bundle is replaced by the space of basic sections
over each leaf closure; and

\item $n_{\tau }$ is the number of different inequivalent $G_{j}$%
-representation types present in a typical fiber of $W^{\tau }$.
\end{enumerate}
\end{theorem}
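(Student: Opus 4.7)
The plan is to reduce the basic index to an equivariant index on the basic manifold, apply the equivariant index theorem of \cite{BKR}, and then translate every term back to the foliation side using the Molino correspondence. First I would invoke Proposition~\ref{NewD} to identify
\[
\mathrm{ind}_{b}\left( D_{b}^{E+}\right)=\mathrm{ind}^{\rho _{0}}\!\left( \mathcal{D}\right),
\]
where $\rho _{0}$ denotes the trivial representation of $G=O(q)\times U(k)$ (or the appropriate reduction from Remarks~\ref{SometimesNotUseU(k)Remark}--\ref{TransversallyOrientableCaseRemark}) and $\mathcal{D}$ is the transversally elliptic $G$-equivariant operator on the basic manifold $\widehat{W}$ constructed in Section~\ref{BasicIndexAndDiagramSection}. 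This reduction is Fredholm-stable under homotopy of the principal transverse symbol by Corollary~\ref{FredholmPropertiesCorollary}, so I may freely replace $D_{b}^{E}$ by its product-splitting near each singular stratum (using the discussion in Section~\ref{productAssumptionSection}) and pass to the desingularized operator of Section~\ref{desingularizationSection} without changing the index.

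Next I would apply \cite[Theorem 9.2]{BKR} to $\mathcal{D}$ with $\rho =\rho _{0}$. Since $\dim V_{\rho _{0}}=1$ and $\chi _{\rho _{0}}\equiv 1$, the theorem produces exactly an expression of the stated form, but with integrals over $G\backslash \widetilde{\widehat{W}_{0}}$ and $G\backslash \widetilde{\Sigma _{\alpha _{j}}}$, with Atiyah--Singer integrands $A_{0}^{\rho _{0}}$, $A_{j,b}^{\rho _{0}}$, and with eta/kernel contributions of the normal sphere operators $D_{j}^{S+,b}$ associated to the canonical isotropy bundles $W^{b}$ of Definition~\ref{canonicalIsotropyBundleDefinition} on $\widehat{W}$. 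Under the Molino identification of Lemma~\ref{foliationDecomposition}, the stratum $\Sigma _{\alpha _{j}}\subset \widehat{W}$ corresponds to the foliation stratum component $M_{j}$, and the quotient $G\backslash \widetilde{\Sigma _{\alpha _{j}}}$ is isometric (after the conformal rescaling used in the proof of Proposition~\ref{NewD}) to $\widetilde{M_{j}}/\overline{\mathcal{F}}$, and similarly $G\backslash \widetilde{\widehat{W}_{0}}\cong \widetilde{M_{0}}/\overline{\mathcal{F}}$. Thus every integration domain in \cite[Theorem 9.2]{BKR} matches the corresponding one in the statement.

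To finish, I would identify each integrand and each spectral invariant on the foliation side. The equivariant index theorem sums over the canonical isotropy bundles $W^{b}\to \Sigma _{\alpha _{j}}$; under the map $p\circ \widehat{\pi }^{-1}$ these pull back/push down precisely to the $W^{\tau }$-twists of Definition~\ref{W-tau-twistDefinition} on $M_{j}$, so the labels $b$ and $\tau $ are in bijection and $\mathrm{rank}\,W^{b}=\mathrm{rank}\,W^{\tau }$, $n_{b}=n_{\tau }$. The operator $\mathbf{1}\otimes D_{M_{j}}$ on $M_{j}$ twisted by $W^{\tau }$ is the $G$-invariant reduction of the twisted product $D_{M_{j}}\ast \mathrm{id}_{W^{b}}$ on $\Sigma _{\alpha _{j}}$; applying the same reduction $\Phi $ of Section~\ref{BasicIndexAndDiagramSection} to this twisted operator identifies the local heat supertraces, giving $A_{j,b}^{\rho _{0}}(x)=A_{j,b}^{\tau }(x)$, and likewise $A_{0}^{\rho _{0}}(x)=A_{0,b}(x)$. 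Finally, the normal sphere $S(N_{x}\Sigma _{\alpha _{j}})$ at $x\in \Sigma _{\alpha _{j}}$ maps isometrically to the normal sphere to the leaf closure $\overline{L}\subset M_{j}$, and the operator $D_{j}^{S+,b}$ coincides with the foliation-side $D_{j}^{S+,\tau }$ built from the principal transverse symbol of $D_{b}^{E}$ at a point of $M_{j}$, so $\eta (D_{j}^{S+,b})=\eta (D_{j}^{S+,\tau })$ and $h(D_{j}^{S+,b})=h(D_{j}^{S+,\tau })$. Assembling these identifications yields the stated formula.

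The main obstacle is the identification in the last paragraph: one must verify carefully that the fine-component/canonical-isotropy-bundle decomposition on $\widehat{W}$, together with the conformal change of metric from the proof of Proposition~\ref{NewD}, intertwines the analytic ingredients (the twisted heat supertrace $A_{j,b}^{\rho _{0}}$ and the equivariant eta invariant) with their foliation counterparts. This requires checking that the fiberwise product structure $D_{N}\ast D_{M_{j}}$ on a tubular neighborhood of a singular stratum is preserved under the $\widehat{\pi }$-descent, so that the sphere operator extracted on $\widehat{W}$ is literally the $G_{j}$-equivariant operator obtained from the principal transverse symbol of $D_{b}^{E}$ at a single point of $M_{j}$, and that the various conformal/volume factors cancel in the index computation because they multiply both sides of $\Phi $ uniformly.
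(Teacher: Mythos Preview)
Your proposal is correct and follows essentially the same route as the paper: reduce to the $G$-invariant index of $\mathcal{D}$ on $\widehat{W}$ via Proposition~\ref{NewD}, apply the equivariant index theorem of \cite{BKR} at the trivial representation (the paper cites the specialized \cite[Theorem~9.6]{BKR} rather than \cite[Theorem~9.2]{BKR}, but this is the same statement for $\rho=\rho_0$), and then identify each quotient, integrand, and spherical invariant with its foliation counterpart through the Molino correspondence. Your concern about conformal/volume factors is unnecessary, since both the equivariant index and the local heat-kernel supertrace integrands on the quotients are already metric-independent in the relevant sense; the paper's proof simply asserts the identifications $G\backslash\widetilde{\widehat{W}_0}=\widetilde{M_0}/\overline{\mathcal{F}}$, $G\backslash\widetilde{\Sigma_{\alpha_j}}=\widetilde{M_j}/\overline{\mathcal{F}}$ and the equality of the sphere operators without further metric adjustment.
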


\begin{proof}
Using Proposition \ref{NewD}, we have%
\begin{equation*}
\mathrm{ind}_{b}\left( D_{b}^{E}\right) =\mathrm{ind}\left( \mathcal{D}%
^{G}\right) ,
\end{equation*}%
where $\mathcal{D}=\mathcal{D}^{+}$ is defined in (\ref{newDFormula}). Let $%
\Sigma _{\alpha _{1}},...,\Sigma _{\alpha _{r}}$ denote the components of
the strata of the basic manifold $\widehat{W}$ relative to the $G$-action
corresponding to the components $M_{1},...,M_{r}$. Near each $\Sigma
_{\alpha _{j}}$, we write $\mathcal{D}=D_{N}\ast D^{\alpha _{j}}$, and write 
$D_{N}=Z_{j}\left( \nabla _{\partial _{r}}^{E}+\frac{1}{r}D_{j}^{S}\right) $
in polar coordinates. By the Invariant Index Theorem \cite[Theorem 9.6]{BKR}%
, a special case of the Equivariant Index Theorem stated in the last
section, we have%
\begin{eqnarray*}
\mathrm{ind}\left( \mathcal{D}^{G}\right) &=&\int_{G\diagdown \widetilde{%
\widehat{W}_{0}}}A_{0}^{G}\left( x\right) ~\widetilde{\left\vert
dx\right\vert }~+\sum_{j=1}^{r}\beta \left( \Sigma _{\alpha _{j}}\right) ~,
\\
\beta \left( \Sigma _{\alpha _{j}}\right) &=&\frac{1}{2}\sum_{\tau \in B}%
\frac{1}{n_{\tau }\mathrm{rank~}W^{\tau }}\left( -\eta \left( D_{j}^{S+,\tau
}\right) +h\left( D_{j}^{S+,\tau }\right) \right) \int_{G\diagdown 
\widetilde{\Sigma _{\alpha _{j}}}}A_{j,\tau }^{G}\left( x\right) ~\widetilde{%
\left\vert dx\right\vert }~,
\end{eqnarray*}%
where $\tau \in B$ only if $W^{\tau }$ corresponds to irreducible isotropy
representations whose duals are present in $E^{\alpha _{j}}$, the bundle on
which $D^{\alpha _{j}}$ acts. First, $G\diagdown \widetilde{\widehat{W}_{0}}=%
\widetilde{M_{0}}\diagup \overline{\mathcal{F}}$, and $G\diagdown \widetilde{%
\Sigma _{\alpha _{j}}}=\widetilde{M_{j}}\diagup \overline{\mathcal{F}}$. By
definition, $A_{0}^{G}\left( x\right) $ is the Atiyah-Singer integrand, the
local supertrace of the ordinary heat kernel associated to the elliptic
operator induced from $\mathcal{D}^{\prime }$ (blown-up and doubled from $%
\mathcal{D}$) on the quotient $G\diagdown \widehat{W}_{0}^{\prime }$, where
the bundle $\mathcal{E}\rightarrow \widehat{W}$ is replaced by the bundle of
invariant sections of $\mathcal{E}$ over each orbit (corresponding to a
point of $G\diagdown \widetilde{\widehat{W}_{0}}$). This is precisely the
the space of basic sections of over the corresponding leaf closure (point of 
$\widetilde{M_{0}}\diagup \overline{\mathcal{F}}$), and the operator is the
same as $\widetilde{D_{b}^{E}}$ by construction. Similarly, $A_{j,\tau }^{G} 
$ is the local supertrace of the ordinary heat kernel associated to the
elliptic operator induced from $\left( \mathbf{1}\otimes D^{\alpha
_{j}}\right) ^{\prime }$ (blown-up and doubled from $\mathbf{1}\otimes
D^{\alpha _{j}}$, the twist of $D^{\alpha _{j}}$ by the canonical isotropy
bundle $W^{\tau }\rightarrow \Sigma _{\alpha _{j}}$ ) on the quotient $%
G\diagdown \Sigma _{\alpha _{j}}^{\prime }$, where the bundle is replaced by
the space of invariant sections over each orbit. Again, this part of the
formula is exactly that shown in the statement of the theorem. The
quantities $-\eta \left( D_{j}^{S+,\tau }\right) +h\left( D_{j}^{S+,\tau
}\right) $ in the equivariant and basic formulas are the same, since the
spherical operator on the normal bundle to the stratum in the basic manifold
is the same as the spherical operator defined on the normal bundle to the
stratum of the Riemannian foliation. The theorem follows. z
\end{proof}

\section{The representation-valued basic index theorem\label%
{representationValuedSection}}

In order to retain the complete information given by Atiyah's distributional
index of the transversal differential operator $\mathcal{D}$, we need to
consider the equivariant indices $\mathrm{ind}^{\rho }\left( \mathcal{D}%
\right) $ associated to any irreducible representation $\rho $ of $O\left(
q\right) $.

\begin{definition}
The \textbf{representation-valued basic index} of the transversal Dirac
operator $D_{\mathrm{tr}}^{E}$ is defined as 
\begin{equation*}
\mathrm{ind}_{b}^{\rho }\left( D_{\mathrm{tr}}^{E}\right) =\mathrm{ind}%
^{\rho }\left( \mathcal{D}\right) .
\end{equation*}
\end{definition}

Using \cite[Theorem 9.2]{BKR}, we have the following result. The proof is no
different than that of Theorem \ref{basicIndexTheorem}.

\begin{theorem}
(Representation-valued Basic Index Theorem for Riemannian foliations) \label%
{representationValuedTheorem}Let $M_{0}$ be the principal stratum of the
Riemannian foliation $\left( M,\mathcal{F}\right) $, and let $M_{1}$, ... , $%
M_{r}$ denote all the components of all singular strata, corresponding to $%
O\left( q\right) $-isotropy types $\left[ G_{1}\right] $, ... ,$\left[ G_{r}%
\right] $ on the basic manifold. With notation as in the previous section,
we have 
\begin{align*}
\mathrm{ind}_{b}^{\rho }\left( D_{\mathrm{tr}}^{E}\right) &=\int_{%
\widetilde {M_{0}}\diagup \overline{\mathcal{F}}}A_{0}^{\rho }\left(
x\right) ~\widetilde{\left\vert dx\right\vert }+\sum_{j=1}^{r}\beta \left(
M_{j}\right) ~, \\
\beta \left( M_{j}\right) &= \frac{1}{2}\sum_{\tau }\frac{1}{n_{\tau } 
\mathrm{rank~}W^{\tau }} \left( - \eta \left( D_{j}^{S+,\tau }\right) +
h\left( D_{j}^{S+,\tau }\right) \right) \int_{\widetilde{M_{j}}\diagup 
\overline{\mathcal{F}}}A_{j,\tau }^{\rho }\left( x\right) ~\widetilde{%
\left\vert dx\right\vert },
\end{align*}%
where $A_{0}^{\rho }\left( x\right) $ and $A_{j,\tau }^{\rho }\left(
x\right) $ are the local Atiyah-Singer integrands of the operators induced
on the leaf closure spaces by extracting the sections of type $\rho $ from $%
\widetilde{M_{0}}$ and $\widetilde{M_{j}}$.
\end{theorem}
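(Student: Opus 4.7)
The plan is to imitate the proof of Theorem \ref{basicIndexTheorem} verbatim, with the sole change that the full Equivariant Index Theorem \cite[Theorem 9.2]{BKR} replaces its invariant-index specialization. By the very definition
\begin{equation*}
\mathrm{ind}_{b}^{\rho }\left( D_{\mathrm{tr}}^{E}\right) := \mathrm{ind}^{\rho }\left( \mathcal{D}\right),
\end{equation*}
no analogue of Proposition \ref{NewD} must be reproved; what is needed is only to match every piece of the equivariant formula on $\widehat{W}$ with the corresponding geometric data on $M$.

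First, I would apply \cite[Theorem 9.2]{BKR} to the $G$-equivariant, transversally elliptic operator $\mathcal{D}$ on the basic manifold $\widehat{W}$, with $G = O(q) \times U(k)$. This yields an expression for $\mathrm{ind}^{\rho }\left( \mathcal{D}\right)$ as the sum of an Atiyah--Singer-type integral of a local integrand $A_{0}^{\rho}$ over the desingularized principal-orbit quotient $G \diagdown \widetilde{\widehat{W}_{0}}$ and, for each stratum component $\Sigma_{\alpha_{j}}$, a correction term $\beta(\Sigma_{\alpha_{j}})$ consisting of a sum over canonical isotropy bundles $W^{\tau}$ of the equivariant eta and kernel-dimension invariants of the normal spherical operator $D_{j}^{S+,\tau}$ multiplied by an integral of $A_{j,\tau}^{\rho}$ over $G \diagdown \widetilde{\Sigma_{\alpha_{j}}}$.

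Next, I would transport this formula through the foliation/basic-manifold dictionary of Section \ref{BasicIndexAndDiagramSection} and Section \ref{stratification}. The identifications $G \diagdown \widetilde{\widehat{W}_{0}} \cong \widetilde{M_{0}}\diagup\overline{\mathcal{F}}$ and $G \diagdown \widetilde{\Sigma_{\alpha_{j}}} \cong \widetilde{M_{j}}\diagup\overline{\mathcal{F}}$ follow from Lemma \ref{foliationDecomposition}. The conjugation $\mathcal{D} = \Phi^{-1} \circ (D_{\mathrm{tr},p^{\ast}}^{p^{\ast}E} - \tfrac{1}{2}c(\widehat{H_{b}})) \circ \Phi$ shows that the transverse principal symbols of $\mathcal{D}$ at a point of $\Sigma_{\alpha_{j}}$ and of $D_{\mathrm{tr}}^{E}$ at the corresponding point of $M_{j}$ agree, so the normal spherical operators $D_{j}^{S+,\tau}$ are identical on the two sides, and therefore so are their $\eta$ and $h$ contributions. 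Finally, pulling back the heat-kernel supertrace integrand on the $\rho$-isotypic piece of the relevant bundle under $\Phi$ produces exactly the local Atiyah--Singer integrand on the leaf closure space of the induced elliptic operator obtained by extracting type-$\rho$ sections from $\widetilde{M_{0}}$ and $\widetilde{M_{j}}$, which by definition are the $A_{0}^{\rho}$ and $A_{j,\tau}^{\rho}$ in the statement.

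The main subtlety, already handled implicitly in the proof of Theorem \ref{basicIndexTheorem}, is to verify that the three operations \emph{desingularize-and-double} (Section \ref{desingularizationSection}), \emph{twist by the canonical isotropy bundle} $W^{\tau}$ (Definition \ref{W-tau-twistDefinition}), and \emph{project onto the $\rho$-isotypic subspace of invariant sections} commute with $\Phi$ and with the radial metric modifications used in constructing $\widetilde{M}$. For $\rho = \rho_{0}$ the trivial representation this is precisely what was checked in Theorem \ref{basicIndexTheorem}; because $\Phi$ and all of the desingularization/twist constructions are $G$-equivariant and independent of the choice of $\rho$, the same verification goes through unchanged for an arbitrary irreducible $\rho$. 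Once this is granted, the theorem follows immediately from the equivariant index formula in \cite{BKR}, as the authors remark.
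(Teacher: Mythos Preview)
Your proposal is correct and follows essentially the same approach as the paper: the authors explicitly state that the proof is no different from that of Theorem \ref{basicIndexTheorem}, with \cite[Theorem 9.2]{BKR} used in place of its invariant-index specialization. Your write-up in fact supplies more detail on the transport of the equivariant formula through the foliation/basic-manifold dictionary than the paper itself does.
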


\section{The basic index theorem for foliations given by suspension\label%
{suspensionSection}}

One class of examples of Riemannian foliations are those constructed by
suspensions. Let $X$ be a closed manifold with fundamental group $\pi
_{1}\left( X\right) ,$ which acts on the universal cover $\widetilde{X}$ by
deck transformations. Let $\phi :\pi _{1}\left( X\right) \rightarrow \mathrm{%
Isom}\left( Y\right) $ be a homomorphism to the group of isometries of a
closed Riemannian manifold $Y$. The suspension is defined to be%
\begin{equation*}
X\times _{\phi }Y=\widetilde{X}\times Y\diagup \sim ,
\end{equation*}%
where the equivalence relation is defined by $\left( x,y\right) \sim \left(
x\cdot g^{-1},\phi \left( g\right) y\right) $ for any $g\in \pi _{1}\left(
X\right) $. The foliation $\mathcal{F}$ associated to this suspension is
defined by the $\widetilde{X}$-parameter submanifolds, so that $T\mathcal{F}$
agrees with $T\widetilde{X}$ over each fundamental domain of $X\times _{\phi
}Y$ in $\widetilde{X}\times Y$. This foliation is Riemannian, with
transverse metric given by the metric on $Y$. A transversally-elliptic
operator that preserves the foliation is simply an elliptic operator $D^{Y}$
on $Y$ that is $G$-equivariant, where $G=\phi \left( \pi _{1}\left( X\right)
\right) \subset \mathrm{Isom}\left( Y\right) $. It follows that $D^{Y}$ is
also equivariant with respect to the action of the closure $\overline{G}$, a
compact Lie group. Then we have that the basic index satisfies 
\begin{equation*}
\mathrm{ind}_{b}\left( D_{b}^{Y}\right) =\mathrm{ind}\left( \left(
D^{Y}\right) ^{\overline{G}}\right) .
\end{equation*}%
We wish to apply the basic index theorem to this example. Observe that the
strata of the foliation $\mathcal{F}$ are determined by the strata of the $%
\overline{G}$-action on $Y$. Precisely, if $\Sigma _{\alpha _{1}},...,\Sigma
_{\alpha _{r}}$ are the components of the strata of $Y$ relative to $%
\overline{G}$, then each%
\begin{equation*}
M_{j}=\widetilde{X}\times \Sigma _{\alpha _{j}}\diagup \sim
\end{equation*}%
is a component of a stratum of the foliation $\left( X\times _{\phi }Y,%
\mathcal{F}\right) $. Similarly, the desingularizations of the foliation
correspond exactly to the desingularizations of the group action in the
Equivariant Index Theorem (\cite{BKR}), applied to the $\overline{G}$ action
on $Y$. By the basic index theorem,%
\begin{multline*}
\mathrm{ind}_{b}\left( D_{b}^{Y}\right) =\int_{\widetilde{M_{0}}\diagup 
\overline{\mathcal{F}}}A_{0,b}\left( x\right) ~\widetilde{\left\vert
dx\right\vert }+\sum_{j=1}^{r}\beta \left( M_{j}\right) ~ =\int_{\widetilde{%
Y_{0}}\diagup \overline{G}}A_{0}^{G}\left( x\right) ~\widetilde{\left\vert
dx\right\vert }+\sum_{j=1}^{r}\beta \left( M_{j}\right) ,
\end{multline*}%
where $A_{0}^{G}\left( x\right) $ is the Atiyah-Singer integrand of the
operator $D^{Y}$ on the (blown up) quotient of the principal stratum of the $%
\overline{G}$-action, where the bundle is the space of invariant sections on
the corresponding orbit. Similarly, the singular terms $\beta \left(
M_{j}\right) $ are exactly the same as those in the Equivariant Index
Theorem, applied to the $\overline{G}$ action on $Y$. Thus, the basic index
theorem gives precisely the same formula as the Equivariant Index Theorem
calculating the index $\mathrm{ind}\left( \left( D^{Y}\right) ^{\overline{G}%
}\right) $.

We remark that in this particular case, the basic index may be calculated in
an entirely different way, using the Atiyah-Segal fixed point formula for $%
\overline{G}$-equivariant elliptic operators (see \cite{ASe}). Their formula
is a formula for $\mathrm{ind}_{g}\left( D^{Y}\right) $, the difference of
traces of the action of $g\in \overline{G}$ on $\ker \left( D^{Y}\right) $
and $\ker \left( D^{Y\ast }\right) $, and the answer is an integral $%
\int_{Y^{g}}\alpha _{g}$ of characteristic classes over the fixed point set $%
Y^{g}\subset Y$ of the element $g$. To extract the invariant part of this
index, we would need to calculate%
\begin{equation*}
\mathrm{ind}\left( \left( D^{Y}\right) ^{\overline{G}}\right) =\int_{%
\overline{G}}\mathrm{ind}_{g}\left( D^{Y}\right) dg=\int_{\overline{G}%
}\left( \int_{Y^{g}}\alpha _{g}\right) dg,
\end{equation*}%
where $dg$ is the normalized Haar measure. Since the fixed point set changes
with $g$, the integral above could not be evaluated as above. However, if $%
\overline{G}$ is connected, we could use the Weyl integration formula to
change the integral to an integral over a maximal torus $T$, and we could
replace $Y^{g}$ with the fixed point set $Y^{T}$, since for generic $g\in T$%
, $Y^{g}=Y^{T}$. Moreover, if $G$ is not connected, one may construct a
suspension $Y^{\prime }$ of the manifold on which a larger connected group $%
G^{\prime }$ acts such that $G^{\prime }\diagdown Y^{\prime }=G\diagdown Y$.

\section{An example of transverse signature\label{transverseSignSection}}

In this section we give an example of a transverse signature operator that
arises from an $S^{1}$ action on a $5$-manifold. This is essentially a
modification of an example from \cite[pp. 84ff]{A}, and it illustrates the
fact that the eta invariant term may be nonzero. Let $Z^{4}$ be a closed,
oriented, $4$-dimensional Riemannian manifold on which $\mathbb{Z}_{p}$ ($p$
prime $>2$) acts by isometries with isolated fixed points $x_{i}$, $%
i=1,...,N $. Let $M=Z^{4}\times _{\mathbb{Z}_{p}}S^{1}$, where $\mathbb{Z}%
_{p}$ acts on $S^{1}$ by rotation by multiples of $\frac{2\pi }{p}$. Then $%
S^{1}$ acts on $M$, and $M\diagup S^{1}\cong Z^{4}\diagup \mathbb{Z}_{p}$.

Next, let $D^{+}$ denote the signature operator $d+d^{\ast }$ from self-dual
to anti-self-dual forms on $Z^{4}$; this induces a transversally elliptic
operator (also denoted by $D^{+}$). Then the $S^{1}$-invariant index of $%
D^{+}$ satisfies%
\begin{equation*}
\mathrm{ind}^{\rho _{0}}\left( D^{+}\right) =\mathrm{Sign}\left( M\diagup
S^{1}\right) =\mathrm{Sign}\left( Z^{4}\diagup \mathbb{Z}_{p}\right) .
\end{equation*}%
By the Invariant Index Theorem \cite[Theorem 9.6]{BKR} 
and the fact that the Atiyah-Singer integrand is the Hirzebruch $L$%
-polynomial $\frac{1}{3}p_{1}$,%
\begin{eqnarray*}
\mathrm{ind}^{\rho _{0}}\left( D\right) &=&\frac{1}{3}\int_{\widetilde{M}%
\diagup S^{1}}p_{1}~ \\
&&+\frac{1}{2}\sum_{j=1}^{N}\left( -\eta \left( D_{j}^{S+,\rho _{0}}\right)
+h\left( D_{j}^{S+,\rho _{0}}\right) \right) ,
\end{eqnarray*}%
where each $D_{i}^{S+,\rho _{0}}$ is two copies of the boundary signature
operator 
\begin{equation*}
B=\left( -1\right) ^{p}\left( \ast d-d\ast \right)
\end{equation*}%
on $2l$-forms ($l=0,1$) on the lens space $S^{3}\diagup \mathbb{Z}_{p}$. We
have $h\left( D_{j}^{S+,\rho _{0}}\right) =2h\left( B\right) =2$
(corresponding to constants), and in \cite{APS2} the eta invariant is
explicitly calculated to be%
\begin{equation*}
\eta \left( D_{j}^{S+,\rho _{0}}\right) =2\eta \left( B\right) =-\frac{2}{p}%
\sum_{k=1}^{p-1}\cot \left( \frac{km_{j}\pi }{p}\right) \cot \left( \frac{%
kn_{j}\pi }{p}\right) ,
\end{equation*}%
where the action of the generator $\zeta $ of $\mathbb{Z}_{p}$ on $S^{3}$ is%
\begin{equation*}
\zeta \cdot \left( z_{1},z_{2}\right) =\left( e^{\frac{2m_{j}\pi i}{p}%
}z_{1},e^{\frac{2n_{j}\pi i}{p}}z_{2}\right) ,
\end{equation*}%
with $\left( m_{j},p\right) =\left( n_{j},p\right) =1$. Thus,%
\begin{equation*}
\mathrm{Sign}\left( M\diagup S^{1}\right) =\frac{1}{3}\int_{\widetilde{Z^{4}}%
\diagup \mathbb{Z}_{p}}p_{1}+\frac{1}{p}\sum_{j=1}^{N}\sum_{k=1}^{p-1}\cot
\left( \frac{km_{j}\pi }{p}\right) \cot \left( \frac{kn_{j}\pi }{p}\right) +N
\end{equation*}%
Note that in \cite[pp. 84ff]{A} it is shown that%
\begin{equation*}
\mathrm{Sign}\left( M\diagup S^{1}\right) =\frac{1}{3}\int_{Z^{4}\diagup 
\mathbb{Z}_{p}}p_{1}+\frac{1}{p}\sum_{j=1}^{N}\sum_{k=1}^{p-1}\cot \left( 
\frac{km_{j}\pi }{p}\right) \cot \left( \frac{kn_{j}\pi }{p}\right) ,
\end{equation*}%
which demonstrates that 
\begin{equation*}
\frac{1}{3}\int_{Z^{4}\diagup \mathbb{Z}_{p}}p_{1}-\frac{1}{3}\int_{%
\widetilde{Z^{4}}\diagup \mathbb{Z}_{p}}p_{1}=N,
\end{equation*}%
illustrating the difference between the blowup $\widetilde{M}$ and the
original $M$.

\section{The Basic Euler characteristic\label{euler}}

\subsection{The Basic Gauss-Bonnet Theorem}

Suppose that a smooth, closed manifold $M$ is endowed with a smooth
foliation $\mathcal{F}$.

In the theorem that follows, we express the basic Euler characteristic in
terms of the ordinary Euler characteristic, which in turn can be expressed
in terms of an integral of curvature. We extend the Euler characteristic
notation $\chi \left( Y\right) $ for $Y$ any open (noncompact without
boundary) or closed (compact without boundary) manifold to mean%
\begin{equation*}
\chi \left( Y\right) =%
\begin{array}{ll}
\chi \left( Y\right) & \text{if }Y\text{ is closed} \\ 
\chi \left( 1\text{-point compactification of }Y\right) -1~ & \text{if }Y%
\text{ is open}%
\end{array}%
\end{equation*}%
Also, if $\mathcal{L}$ is a flat foliated line bundle over a Riemannian
foliation $\left( X,\mathcal{F}\right) $, we define the basic Euler
characteristic $\chi \left( X,\mathcal{F},\mathcal{L}\right) $ as before,
using the basic cohomology groups with coefficients in the line bundle $%
\mathcal{L}$.

\begin{theorem}
(Basic Gauss-Bonnet Theorem, announced in \cite{RiLodz}) \label%
{BasicGaussBonnet}Let $\left( M,\mathcal{F}\right) $ be a Riemannian
foliation. Let $M_{0}$,..., $M_{r}$ be the strata of the Riemannian
foliation $\left( M,\mathcal{F}\right) $, and let $\mathcal{O}_{M_{j}\diagup 
\overline{\mathcal{F}}}$ denote the orientation line bundle of the normal
bundle to $\overline{\mathcal{F}}$ in $M_{j}$. Let $L_{j}$ denote a
representative leaf closure in $M_{j}$. With notation as above, the basic
Euler characteristic satisfies 
\begin{equation*}
\chi \left( M,\mathcal{F}\right) =\sum_{j}\chi \left( M_{j}\diagup \overline{%
\mathcal{F}}\right) \chi \left( L_{j},\mathcal{F},\mathcal{O}_{M_{j}\diagup 
\overline{\mathcal{F}}}\right) .
\end{equation*}
\end{theorem}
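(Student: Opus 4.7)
The plan is to derive the Basic Gauss-Bonnet formula as a corollary of the Basic Index Theorem (Theorem~\ref{basicIndexTheorem}) applied to the basic de Rham operator. As explained in Section~\ref{diracex}, the basic Euler characteristic equals $\mathrm{ind}_b(D_b)$ for the basic de Rham operator $D_b = d_b+\delta_b-\tfrac{1}{2}(\kappa_b\wedge+\kappa_b\lrcorner)$ acting on $\Omega_b^*(M,\mathcal{F})$. Applying Theorem~\ref{basicIndexTheorem} decomposes $\chi(M,\mathcal{F})$ into a principal-stratum integral plus contributions $\beta(M_j)$ from each component of each singular stratum, and the task is to identify the $j$th contribution with the product $\chi(M_j\diagup\overline{\mathcal{F}})\cdot\chi(L_j,\mathcal{F},\mathcal{O}_{M_j\diagup\overline{\mathcal{F}}})$.

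First I would analyze the principal-stratum term. The induced elliptic operator on the smooth quotient $\widetilde{M_0}\diagup\overline{\mathcal{F}}$ is itself a de Rham-type operator, twisted by the vector bundle whose fiber over each leaf closure consists of $\overline{\mathcal{F}}$-basic forms on that closure. Consequently, the Atiyah-Singer integrand $A_{0,b}(x)$ factors, up to exact terms, as the Pfaffian of the leaf closure space wedged with the fiberwise local index form, and a Fubini argument along the Riemannian submersion over $M_0\diagup\overline{\mathcal{F}}$ yields
\[
\int_{\widetilde{M_0}\diagup\overline{\mathcal{F}}} A_{0,b}(x)\,\widetilde{\left\vert dx\right\vert} = \chi(M_0\diagup\overline{\mathcal{F}})\cdot\chi(L_0,\mathcal{F},\mathcal{O}_{M_0\diagup\overline{\mathcal{F}}}).
\]
The orientation line bundle appears because the even/odd grading on $\wedge^*Q^*$ requires a local orientation of the normal bundle to $\overline{\mathcal{F}}$.

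Next, for each singular stratum $M_j$, the basic de Rham operator near $M_j$ splits (after a foliated homotopy, as in Section~\ref{productAssumptionSection}) as the product of a spherical de Rham operator $D_j^S$ on the unit normal sphere to $M_j$ and a basic de Rham operator on $M_j$. The crucial reduction is that for the de Rham operator on a sphere, symmetry of the spectrum under the Hodge star combined with the parity grading forces $\eta(D_j^{S+,\tau})=0$, so only the kernel dimension $h(D_j^{S+,\tau})$ contributes. These dimensions record the equivariant cohomology of the link of $M_j$; weighted summation over the canonical isotropy bundles $W^\tau$ via the factor $1/(n_\tau\,\mathrm{rank}\,W^\tau)$ assembles, via Frobenius reciprocity and the basic Hodge theorem, into $\chi(L_j,\mathcal{F},\mathcal{O}_{M_j\diagup\overline{\mathcal{F}}})$, while the integral of $A_{j,b}^\tau$ contributes the factor $\chi(M_j\diagup\overline{\mathcal{F}})$ by the same Fubini argument as above. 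Combining yields $\beta(M_j) = \chi(M_j\diagup\overline{\mathcal{F}})\cdot\chi(L_j,\mathcal{F},\mathcal{O}_{M_j\diagup\overline{\mathcal{F}}})$.

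The main obstacle is the singular-stratum step: one must rigorously identify the kernel dimensions $h(D_j^{S+,\tau})$ with the correct equivariant basic cohomology of the link of $M_j$, track the sign twists producing the orientation line bundle $\mathcal{O}_{M_j\diagup\overline{\mathcal{F}}}$, and verify that the weighted sum over isotropy types $\tau$ assembles into the stated product form without spurious contributions from non-principal representations. As announced in the abstract, a second, independent proof is available by induction on the number of strata using the basic Mayer-Vietoris sequence associated to the decomposition $M = (M\setminus T_\varepsilon(M_j))\cup T_\varepsilon(M_j)$ for a minimal stratum $M_j$, combined with the Thom isomorphism for the normal disk bundle of $M_j$; this purely topological route bypasses the analytic machinery entirely and provides an independent verification of the formula.
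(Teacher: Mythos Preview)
Your overall strategy --- apply Theorem~\ref{basicIndexTheorem} to the basic de Rham operator --- matches the paper's second proof, and you correctly note that $\eta(D_j^{S+,\tau})=0$. However, you misattribute where the two factors in the product $\chi(M_j\diagup\overline{\mathcal{F}})\cdot\chi(L_j,\mathcal{F},\mathcal{O}_{M_j\diagup\overline{\mathcal{F}}})$ come from. You claim the weighted sum of the $h(D_j^{S+,\tau})$ over $\tau$ assembles into $\chi(L_j,\mathcal{F},\mathcal{O})$ via ``Frobenius reciprocity and the basic Hodge theorem,'' while the integral contributes $\chi(M_j\diagup\overline{\mathcal{F}})$. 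This is not how the computation goes. In the paper (and this is the concrete step you are missing), one computes $h(D_j^{S+,\tau})$ explicitly: it vanishes unless $\tau$ is the trivial representation $\mathbf{1}$ or the orientation character $\xi_{G_j}$ of $G_j$ acting on the normal fiber, and in those two cases the values are $(2,0)$ or $(1,1)$ according as $G_j$ preserves orientation or not. Either way the sum is $2$, so after the prefactor $\tfrac{1}{2}$ the entire spherical contribution collapses to the coefficient~$1$, and the canonical isotropy bundle for $\xi_{G_j}$ is precisely $\mathcal{O}_{M_j\diagup\overline{\mathcal{F}}}$. The full product $\chi(M_j\diagup\overline{\mathcal{F}})\cdot\chi(L_j,\mathcal{F},\mathcal{O})$ then comes entirely from the integral $\int_{\widetilde{M_j}} K_j(x,\mathcal{O})\,\widetilde{|dx|}$, factored via the multiplicativity of the Euler characteristic for the fiber bundle $L_j\hookrightarrow M_j\to M_j\diagup\overline{\mathcal{F}}$. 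Your version leaves the identification of the nonzero $h$'s as a vague obstacle, but without pinning down that only $\mathbf{1}$ and $\xi_{G_j}$ survive (and with what values), the argument does not close.

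Regarding the independent proof: the paper's alternative is \emph{not} Mayer--Vietoris but rather the basic Hopf index theorem of Belfi--Park--Richardson. One triangulates each $\overline{M_i}\diagup\overline{\mathcal{F}}$ compatibly with the stratification, builds a basic Morse function whose critical leaf closures sit at barycenters of the cells, and applies the Hopf formula $\chi(M,\mathcal{F})=\sum_{L\text{ crit}}\mathrm{ind}(V,L)\,\chi(L,\mathcal{F},\mathcal{O}_L)$ to its gradient; grouping critical leaf closures by stratum gives the result directly. Your Mayer--Vietoris sketch may well be workable, but it is a third route, not the one in the paper.
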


\begin{remark}
In \cite[Corollary 1]{GLott}, they show that in special cases the only term
that appears is one corresponding to a most singular stratum.
\end{remark}

\subsubsection{Proof using the basic Hopf index theorem}

In this section, we prove the basic Gauss-Bonnet Theorem using the Hopf
index theorem for Riemannian foliations (\cite{BePaRi}).

To find a topological formula for the basic index, we first construct a
basic, normal, $\mathcal{F}^{\prime }$-nondegenerate vector field $V$ on $%
\left( M,\mathcal{F}\right) $ and then compute the basic Euler
characteristic from this information. The formula from the main theorem in 
\cite{BePaRi} is 
\begin{equation*}
\chi \left( M,\mathcal{F}\right) =\sum_{L\text{ critical}}\mathrm{ind}\left(
V,L\right) \chi \left( L,\mathcal{F},\mathcal{O}_{L}\right) .
\end{equation*}%
We construct the vector field as follows. First, starting with $i=1$ (where
the holonomy is largest, where $M_{i}\diagup \overline{\mathcal{F}}$ is a
closed manifold), we triangulate $\overline{M_{i}}\diagup \overline{\mathcal{%
F}}\cong \overline{\widehat{W}\left( G_{i}\right) }\diagup G$, without
changing the triangulation of $\left( \overline{M_{i}}\diagup \overline{%
\mathcal{F}}\right) \setminus M_{i}\diagup \overline{\mathcal{F}}$ (to
construct the triangulation, we may first apply the exponential map of $%
M_{i} $ to the normal space to a specific leaf closure of $M_{i}$ and extend
the geodesics to the cut locus, and so on). The result is a triangulation of 
$M\diagup \overline{\mathcal{F}}$ that restricts to a triangulation of each $%
\overline{M_{i}}\diagup \overline{\mathcal{F}}$. Next, we assign the value $%
0 $ to each vertex of the triangulation and the value $k$ to a point on the
interior of each $k$-cell, and we smoothly extend this function to a smooth
basic Morse function on all of $M$ whose only critical leaf closures are
each of the points mentioned above. The gradient of this function is a a
basic, normal, $\mathcal{F}^{\prime }$-nondegenerate vector field $V$ on $M$%
. Thus, letting $L_{k}$ denote a leaf closure corresponding to the value $k$%
, 
\begin{eqnarray*}
\chi \left( M,\mathcal{F}\right) &=&\sum_{L\text{ critical}}\mathrm{ind}%
\left( V,L\right) \chi \left( L,\mathcal{F},\mathcal{O}_{L}\right) \\
&=&\sum_{k}\sum_{L_{k}}\left( -1\right) ^{k}\chi \left( L_{k},\mathcal{F},%
\mathcal{O}_{L}\right) \\
&=&\sum_{i}\chi \left( M_{i}\diagup \overline{\mathcal{F}}\right) \chi
\left( L_{i},\mathcal{F},\mathcal{O}_{L_{i}}\right) \\
&=&\sum_{i}\chi \left( M_{i}\diagup \overline{\mathcal{F}}\right) \chi
\left( L_{i},\mathcal{F},\mathcal{O}_{M_{i}\diagup \overline{\mathcal{F}}%
}\right)
\end{eqnarray*}%
where $L_{i}$ denotes a representative leaf closure of $M_{i}$, and $%
\mathcal{O}_{L_{i}}$ denotes its \textquotedblleft negative direction
orientation bundle\textquotedblright , which by the definition of the vector
field is isomorphic to the orientation bundle $\mathcal{O}_{M_{i}\diagup 
\overline{\mathcal{F}}}$ of $T\left( M_{i}\diagup \overline{\mathcal{F}}%
\right) $.

\subsubsection{Proof using the Basic Index Theorem}

In this section, we prove the basic Gauss-Bonnet Theorem using the Basic
Index Theorem (Theorem \ref{basicIndexTheorem}).

As explained in Section \ref{diracex} we wish to compute $\mathrm{ind}%
_{b}\left( D_{b}^{\prime }\right) =\mathrm{ind}_{b}\left( D_{b}\right) $,
with 
\begin{equation*}
D_{b}^{\prime }=d+\delta _{b};~D_{b}=D_{b}^{\prime }-\frac{1}{2}\left(
\kappa _{b}\wedge +\kappa _{b}\lrcorner \right) .
\end{equation*}%
Let $M_{0}$ be the principal stratum of the Riemannian foliation $\left( M,%
\mathcal{F}\right) $, and let $M_{1}$, ... , $M_{r}$ denote all the
components of all singular strata, corresponding to $O\left( q\right) $%
-isotropy types $\left[ G_{1}\right] $, ... ,$\left[ G_{r}\right] $ on the
basic manifold. At each $M_{j}$, we may write the basic de Rham operator (up
to lower order perturbations) as%
\begin{equation*}
D_{b}=D_{N_{j}}\ast D_{M_{j}}~,
\end{equation*}%
where $D_{N_{j}}$ is in fact the de Rham operator on the vertical forms, and 
$D_{M_{j}}$ is the basic de Rham operator on $\left( M_{j}~,\left. \mathcal{F%
}\right\vert _{M_{j}}\right) $. Further, the spherical operator $D_{j}^{S}$
in the main theorem is simply 
\begin{equation*}
D_{j}^{S}=-c\left( \partial _{r}\right) \left( d+d^{\ast }\right)
^{S},~c\left( \partial _{r}\right) =dr\wedge -dr\lrcorner ~,
\end{equation*}%
where $\left( d+d^{\ast }\right) ^{S}$ is a vector-valued de Rham operator
on the sphere (normal to $M_{j}$) and $r$ is the radial distance from $M_{j}$
. We performed a similar calculation in \cite[Section 10.2]{BKR}, and the
results are that $\eta \left( D_{j}^{S+,\sigma }\right) =0$ for all $G_{j}$%
-representation types $\left[ \sigma \right] $ and 
\begin{equation}
h\left( D_{j}^{S+,\sigma }\right) =\left\{ 
\begin{array}{ll}
2 & \text{if }\sigma =\mathbf{1}\text{ and }G_{j}\text{ preserves orientation%
} \\ 
1 & \text{if }\sigma =\mathbf{1}\text{ and }G_{j}\text{ does not preserve
orientation} \\ 
1 & \text{if }\sigma =\xi _{G_{j}}\text{ and }G_{j}\text{ does not preserve
orientation} \\ 
0 & \text{otherwise}%
\end{array}%
\right.  \label{hFormulasEuler}
\end{equation}%
Here, if some elements of $G_{j}$ reverse orientation of the normal bundle,
then $\xi _{G_{j}}$ denotes the relevant one-dimensional representation of $%
G_{j}$ as $\pm 1$. The orientation line bundle $\mathcal{O}_{M_{i}\diagup 
\overline{\mathcal{F}}}\rightarrow M_{j}$ of the normal bundle to $M_{j}$ is
a pointwise representation space for the representation $\xi _{G_{j}}$.
After pulling back to and pushing forward to the basic manifold, it is the
canonical isotropy $G$-bundle $W^{b}$ corresponding to $\left( j,\left[ \xi
_{G_{j}}\right] \right) $. We may also take it to be a representation bundle
for the trivial $G_{j}$-representation $\mathbf{1}$ (although the trivial
line bundle is the canonical one). The Basic Index Theorem takes the form 
\begin{multline*}
\mathrm{ind}_{b}\left( D_{b}^{E}\right) =\int_{\widetilde{M_{0}}\diagup 
\overline{\mathcal{F}}}A_{0,b}\left( x\right) ~\widetilde{\left\vert
dx\right\vert }+\sum_{j=1}^{r}\beta \left( M_{j}\right) ~ \\
\beta \left( M_{j}\right) =\frac{1}{2}\sum_{j}\left( h\left( D_{j}^{S+,\xi
_{G_{j}}}\right) ++h\left( D_{j}^{S+,\mathbf{1}}\right) \right) \int_{%
\widetilde{M_{j}}\diagup \overline{\mathcal{F}}}A_{j,b}\left( x,\mathcal{O}%
_{M_{j}\diagup \overline{\mathcal{F}}}\right) ~\widetilde{\left\vert
dx\right\vert } \\
=\sum_{j}\int_{\widetilde{M_{j}}\diagup \overline{\mathcal{F}}}A_{j,b}\left(
x,\mathcal{O}_{M_{j}\diagup \overline{\mathcal{F}}}\right) ~\widetilde{%
\left\vert dx\right\vert }.
\end{multline*}%
We rewrite $\int_{\widetilde{M_{j}}\diagup \overline{\mathcal{F}}%
}A_{j,b}\left( x,\mathcal{O}_{M_{j}\diagup \overline{\mathcal{F}}}\right) ~%
\widetilde{\left\vert dx\right\vert }$ as $\int_{\widetilde{M_{j}}%
}K_{j}\left( x,\mathcal{O}_{M_{j}\diagup \overline{\mathcal{F}}}\right) ~%
\widetilde{\left\vert dx\right\vert }$ before taking it to the quotient. We
see that $K_{j}\left( x,\mathcal{O}_{M_{j}\diagup \overline{\mathcal{F}}%
}\right) $ is the Gauss-Bonnet integrand on the desingularized stratum $%
\widetilde{M_{j}}$, restricted to $\mathcal{O}_{M_{j}\diagup \overline{%
\mathcal{F}}}$-twisted basic forms. The result is the relative Euler
characteristic$\chi \left( L_{j},\mathcal{F},\mathcal{O}_{M_{j}\diagup 
\overline{\mathcal{F}}}\right) $ 
\begin{equation*}
\int_{\widetilde{M_{j}}}K_{j}\left( x,\mathcal{O}_{M_{j}\diagup \overline{%
\mathcal{F}}}\right) ~\widetilde{\left\vert dx\right\vert }=\chi \left( 
\overline{M_{j}},\text{lower strata},\mathcal{F},\mathcal{O}_{M_{j}\diagup 
\overline{\mathcal{F}}}\right) ,
\end{equation*}%
Here, the relative basic Euler characteristic is defined for $X$ a closed
subset of a manifold $Y$ as $\chi \left( Y,X,\mathcal{F},\mathcal{V}\right)
=\chi \left( Y,\mathcal{F},\mathcal{V}\right) -\chi \left( X,\mathcal{F},%
\mathcal{V}\right) $, which is also the alternating sum of the dimensions of
the relative basic cohomology groups with coefficients in a complex vector
bundle $\mathcal{V}\rightarrow Y$. Since $M_{j}$ is a fiber bundle over $%
M_{j}\diagup \overline{\mathcal{F}}$ with fiber $L_{j}$ (a representative
leaf closure), we have%
\begin{equation*}
\int_{\widetilde{M_{j}}}K_{j}\left( x,\mathcal{O}_{M_{j}\diagup \overline{%
\mathcal{F}}}\right) ~\widetilde{\left\vert dx\right\vert }=\chi \left(
L_{j},\mathcal{F},\mathcal{O}_{M_{j}\diagup \overline{\mathcal{F}}}\right)
\chi \left( \overline{M_{j}}\diagup \overline{\mathcal{F}},\text{lower strata%
}\diagup \overline{\mathcal{F}}\right) ,
\end{equation*}%
by the formula for the Euler characteristic on fiber bundles, which extends
naturally to the current situation. The Basic Gauss-Bonnet Theorem follows.

\subsubsection{The representation-valued basic Euler characteristic}

Using the Representation-valued Basic Index Theorem (Theorem \ref%
{representationValuedTheorem}), we may use the arguments in the previous
section to derive a formula for the basic Euler characteristic of basic
forms twisted by a representation of $O\left( q\right) $. Since the proof is
nearly the same, we simply state the result.

\begin{theorem}
(Representation-valued Basic Gauss-Bonnet Theorem) \label%
{representationValuedGaussBonnetThm}Let $\left( M,\mathcal{F}\right) $ be a
Riemannian foliation. Let $M_{0}$,..., $M_{r}$ be the strata of the
Riemannian foliation $\left( M,\mathcal{F}\right) $, and let $\mathcal{O}%
_{M_{j}\diagup \overline{\mathcal{F}}}$ denote the orientation line bundle
of the normal bundle to $\overline{\mathcal{F}}$ in $M_{j}$. Let $L_{j}$
denote a representative leaf closure in $M_{j}$. For $\left( X,\mathcal{F}%
_{X}\right) $ a Riemannian foliation of codimension $q$, let $\chi ^{\rho
}\left( X,\mathcal{F}_{X},\mathcal{V}\right) $ denote the index of the basic
de Rham operator twisted by a representation $\rho :O\left( q\right)
\rightarrow U\left( V_{\rho }\right) $ with values in the flat line bundle$%
\mathcal{V}$. Then the basic Euler characteristic satisfies 
\begin{equation*}
\chi ^{\rho }\left( M,\mathcal{F}\right) =\sum_{j}\chi \left( M_{j}\diagup 
\overline{\mathcal{F}}\right) \chi ^{\rho }\left( L_{j},\mathcal{F},\mathcal{%
O}_{M_{j}\diagup \overline{\mathcal{F}}}\right) .
\end{equation*}
\end{theorem}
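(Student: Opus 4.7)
The plan is to follow, step for step, the heat-kernel proof of the untwisted Basic Gauss--Bonnet Theorem given just above, but now invoking the Representation-valued Basic Index Theorem (Theorem \ref{representationValuedTheorem}) in place of Theorem \ref{basicIndexTheorem}. Apply Theorem \ref{representationValuedTheorem} to the basic de Rham operator $D_{b}^{\prime }=d_{b}+\delta _{b}$ (which is equivalent to $D_{b}$ up to a zeroth order term and hence gives the same index). Near each singular stratum $M_{j}$, the basic de Rham operator splits, up to lower order perturbations, as $D_{b}=D_{N_{j}}\ast D_{M_{j}}$, where $D_{N_{j}}$ is the de Rham operator on vertical forms in the normal bundle $NM_{j}$ and $D_{M_{j}}$ is the basic de Rham operator on $(M_{j},\mathcal{F}|_{M_{j}})$, exactly as in the untwisted case.

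Since twisting by a representation $\rho :O(q)\rightarrow U(V_{\rho })$ affects only the $G$-isotypical extraction on the basic manifold and not the \emph{spherical} operator $D_{j}^{S}=-c(\partial _{r})(d+d^{\ast })^{S}$ appearing in the normal direction, the computations from equation (\ref{hFormulasEuler}) carry over verbatim: $\eta (D_{j}^{S+,\sigma })=0$ for every isotropy representation $\sigma $, and $h(D_{j}^{S+,\sigma })$ is nonzero only for $\sigma =\mathbf{1}$ or $\sigma =\xi _{G_{j}}$, with the precise values there. The canonical isotropy $G$-bundles $W^{\tau }$ that contribute are therefore again just the trivial line bundle and the orientation line bundle $\mathcal{O}_{M_{j}\diagup \overline{\mathcal{F}}}$ of the normal bundle to $\overline{\mathcal{F}}$ in $M_{j}$. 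Substituting these into Theorem \ref{representationValuedTheorem} collapses $\beta (M_{j})$ to a single integral
\begin{equation*}
\beta (M_{j})=\int_{\widetilde{M_{j}}\diagup \overline{\mathcal{F}}}A_{j,b}^{\rho }\bigl(x,\mathcal{O}_{M_{j}\diagup \overline{\mathcal{F}}}\bigr)~\widetilde{|dx|}.
\end{equation*}

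The next step is to identify the Atiyah--Singer integrand $A_{j,b}^{\rho }$ as a Gauss--Bonnet integrand on the desingularized quotient $\widetilde{M_{j}}\diagup \overline{\mathcal{F}}$ for the basic de Rham complex twisted by $\rho $ and by $\mathcal{O}_{M_{j}\diagup \overline{\mathcal{F}}}$. Because the integrand comes from the local heat kernel supertrace of a transversal de Rham operator after desingularization, the pointwise formula reduces to the standard Chern--Gauss--Bonnet form of the normal curvature, now acting on $\rho $-twisted, $\mathcal{O}$-twisted basic forms on a representative leaf closure $L_{j}$. Pulling this integral up to $\widetilde{M_{j}}$ and then integrating along the fibers of $M_{j}\rightarrow M_{j}\diagup \overline{\mathcal{F}}$ (whose typical fiber is $L_{j}$) produces the representation-valued $\chi ^{\rho }(L_{j},\mathcal{F},\mathcal{O}_{M_{j}\diagup \overline{\mathcal{F}}})$ times the ordinary Euler characteristic of the base, giving $\chi ^{\rho }(L_{j},\mathcal{F},\mathcal{O}_{M_{j}\diagup \overline{\mathcal{F}}})\,\chi (M_{j}\diagup \overline{\mathcal{F}})$ from the stratum $M_{j}$. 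Summing over $j$ (including the principal stratum, where $\mathcal{O}$ is trivial) yields the claimed formula.

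The main obstacle is the second step: justifying that the local integrand $A_{j,b}^{\rho }$ genuinely factorizes as a pointwise product of the Gauss--Bonnet integrand on the blown-up leaf-closure space with a Euler class contribution coming from the $\rho $-isotypical projection on the fiber, so that fiber integration honestly produces a product of two Euler characteristics. Once the product structure $D_{b}=D_{N_{j}}\ast D_{M_{j}}$ is invoked and combined with the multiplicativity of the heat supertrace under external products of Dirac-type operators (the same multiplicativity that underlies the Thom isomorphism used in Section \ref{productAssumptionSection}), this factorization follows and the rest of the argument reduces to the previous proof of Theorem \ref{BasicGaussBonnet}.
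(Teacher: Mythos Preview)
Your proposal is correct and follows essentially the same approach as the paper: the paper itself does not spell out a proof but simply states that one uses ``the arguments in the previous section'' (i.e., the heat-kernel proof of Theorem~\ref{BasicGaussBonnet} via the Basic Index Theorem) with Theorem~\ref{representationValuedTheorem} substituted for Theorem~\ref{basicIndexTheorem}, and this is exactly what you do. Your explicit observation that the twist by $\rho$ affects only the isotypical extraction and not the spherical operator $D_{j}^{S}$, so that the computations in~(\ref{hFormulasEuler}) carry over verbatim, is the one point that the paper leaves implicit, and you have identified it correctly.
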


\subsection{Examples of the basic Euler characteristic}

\vspace{1pt}In addition to the examples in this section, we refer the reader
to \cite{HabRi2}, where in some nontaut Riemannian foliations, the basic
Euler characteristic and basic cohomology groups and twisted basic
cohomology groups are computed using the theorems in this paper.

The first example is a codimension $2$ foliation on a 3-manifold. Here, $%
O(2) $ acts on the basic manifold, which is homeomorphic to a sphere. In
this case, the principal orbits have isotropy type $\left( \{e\}\right) $,
and the two fixed points obviously have isotropy type $\left( O(2)\right) $.
In this example, the isotropy types correspond precisely to the
infinitesimal holonomy groups.

\begin{example}
\label{rotation} (This example is taken from \cite{Ri} and \cite{Ri2}.)
Consider the one dimensional foliation obtained by suspending an irrational
rotation on the standard unit sphere $S^{2}$. \ On $S^{2}$ we use the
cylindrical coordinates $\left( z,\theta \right) $, related to the standard
rectangular coordinates by $x^{\prime }=\sqrt{\left( 1-z^{2}\right) }\cos
\theta $, $y^{\prime }=\sqrt{\left( 1-z^{2}\right) }\sin \theta $, $%
z^{\prime }=z$. \ Let $\alpha $ be an irrational multiple of $2\pi $, and
let the three--manifold $M=S^{2}\times \left[ 0,1\right] /\sim $, where $%
\left( z,\theta ,0\right) \sim \left( z,\theta +\alpha ,1\right) $. \ Endow $%
M$ with the product metric on $T_{z,\theta ,t}M\cong T_{z,\theta
}S^{2}\times T_{t}\mathbb{R}$. \ Let the foliation $\mathcal{F}$ be defined
by the immersed submanifolds $L_{z,\theta }=\cup _{n\in \mathbb{Z}}\left\{
z\right\} \times \left\{ \theta +\alpha \right\} \times \left[ 0,1\right] $
(not unique in $\theta $). \ The leaf closures $\overline{L}_{z}$ for $|z|<1$
are two dimensional, and the closures corresponding to the poles ($z=\pm 1$)
are one dimensional.\newline
The stratification of $\left( M,\mathcal{F}\right) $ is $M\left(
H_{1}\right) \coprod M\left( H_{2}\right) $, where $M\left( H_{1}\right) $
is the union of the two ``polar'' leaves ($z=\pm 1$), and $M\left(
H_{2}\right) $ is the complement of $M\left( H_{1}\right) $. Note that each
orientation bundle $\mathcal{O}_{M\left( H_{i}\right) \diagup \overline{%
\mathcal{F}}}$ is trivial. Next, $\chi \left( M\left( H_{2}\right) \diagup 
\overline{\mathcal{F}}\right) =\chi \left( \text{open interval}\right) =-1$,
and $\chi \left( M\left( H_{1}\right) \diagup \overline{\mathcal{F}}\right)
=\chi \left( \text{disjoint union of two points}\right) =2$. Observe that $%
\chi \left( L_{1},\mathcal{F},\mathcal{O}_{M\left( H_{1}\right) \diagup 
\overline{\mathcal{F}}}\right) =\chi \left( L_{1},\mathcal{F}\right) $ $%
=\chi \left( S^{1},S^{1}\right) =1$. However, $\chi \left( L_{2},\mathcal{F},%
\mathcal{O}_{M\left( H_{2}\right) \diagup \overline{\mathcal{F}}}\right)
=\chi \left( L_{2},\mathcal{F}\right) $ $=0$, since every such leaf closure
is a flat torus, on which the foliation restricts to be the irrational flow
and since the vector field $\partial _{\theta }$ is basic, nonsingular, and
orthogonal to the foliation on this torus. By our theorem, we conclude that 
\begin{eqnarray*}
\chi \left( M,\mathcal{F}\right) &=&\sum_{i}\chi \left( M\left( H_{i}\right)
\diagup \overline{\mathcal{F}}\right) \chi \left( L_{i},\mathcal{F},\mathcal{%
\ \ \ \ O}_{M\left( H_{i}\right) \diagup \overline{\mathcal{F}}}\right) \\
&=&2\cdot 1+\left( -1\right) \cdot 0=2.
\end{eqnarray*}

We now directly calculate the Euler characteristic of this foliation. Since
the foliation is taut, the standard Poincare duality works \cite{KT3} \cite%
{KT4} , and $H_{b}^{0}\left( M\right) \cong H_{b}^{2}\left( M\right) \cong 
\mathbb{R}$ . It suffices to check the dimension $h^{1}$ of the cohomology
group $H_{b}^{1}\left( M\right) $. \ Then the basic Euler characteristic is $%
\chi \left( M,\mathcal{F}\right) =1-h^{1}+1=2-h^{1}$. \ Smooth basic
functions are of the form $f\left( z\right) $, where $f\left( z\right) $ is
smooth in $z$ for $-1<z<1$ and is of the form $f\left( z\right) =f_{1}\left(
1-z^{2}\right) $ near $z=1$ for a smooth function $f_{1}$ and is of the form 
$f\left( z\right) =f_{2}\left( 1-z^{2}\right) $\ near $z=-1$ for a smooth
function $f_{2}$. Smooth basic one forms are of the form $\alpha =g\left(
z\right) dz+k\left( z\right) d\theta $, where $g\left( z\right) $ and $\
k\left( z\right) $ are smooth functions for $-1<z<1$ and satisfy 
\begin{eqnarray}
g\left( z\right) &=&g_{1}\left( 1-z^{2}\right) \,\,\text{and}  \notag \\
k\left( z\right) &=&\left( 1-z^{2}\right) k_{1}\left( 1-z^{2}\right)
\label{condition}
\end{eqnarray}%
near $z=1$ and 
\begin{eqnarray*}
g\left( z\right) &=&g_{2}\left( 1-z^{2}\right) \,\,\text{and} \\
k\left( z\right) &=&\left( 1-z^{2}\right) k_{2}\left( 1-z^{2}\right)
\end{eqnarray*}%
near $z=-1$ for smooth functions $g_{1},g_{2},k_{1},k_{2}$~. A simple
calculation shows that 
$\ker d^{1}=\mathrm{im}\,d^{0}$, so that $h^{1}=0$. Thus, $\chi \left( M,%
\mathcal{F}\right) =2$. This example shows that the orbit space can be
dimension 1 (odd) and yet have nontrivial index.
\end{example}

The next example is a codimension $3$ Riemannian foliation for which all of
the infinitesimal holonomy groups are trivial; moreover, the leaves are all
simply connected. There are leaf closures of codimension 2 and codimension
1. The codimension 1 leaf closures correspond to isotropy type $(e)$ on the
basic manifold, and the codimension 2 leaf closures correspond to an
isotropy type $(O(2))$ on the basic manifold. In some sense, the isotropy
type measures the holonomy of the leaf closure in this case.

\begin{example}
This foliation is a suspension of an irrational rotation of $S^{1}$ composed
with an irrational rotation of $S^{2}$ on the manifold $S^{1}\times S^{2}$.
As in Example~\ref{rotation}, on $S^{2}$ we use the cylindrical coordinates $%
\left( z,\theta \right) $, related to the standard rectangular coordinates
by $x^{\prime }=\sqrt{\left( 1-z^{2}\right) }\cos \theta $, $y^{\prime }=%
\sqrt{\left( 1-z^{2}\right) }\sin \theta $, $z^{\prime }=z$. \ Let $\alpha $
be an irrational multiple of $2\pi $, and let $\beta $ be any irrational
number. We consider the four--manifold $M=S^{2}\times \left[ 0,1\right]
\times \left[ 0,1\right] /\sim $, where $\left( z,\theta ,0,t\right) \sim
\left( z,\theta ,1,t\right) $, $\left( z,\theta ,s,0\right) \sim \left(
z,\theta +\alpha ,s+\beta \mod1,1\right) $. Endow $M$ with the product
metric on $T_{z,\theta ,s,t}M\cong T_{z,\theta }S^{2}\times T_{s}\mathbb{R}%
\times T_{t}\mathbb{R}$. Let the foliation $\mathcal{F}$ be defined by the
immersed submanifolds $L_{z,\theta ,s}=\cup _{n\in \mathbb{Z}}\left\{
z\right\} \times \left\{ \theta +\alpha \right\} \times \left\{ s+\beta
\right\} \times \left[ 0,1\right] $ (not unique in $\theta $ or $s$). The
leaf closures $\overline{L}_{z}$ for $|z|<1$ are three--dimensional, and the
closures corresponding to the poles ($z=\pm 1$) are two--dimensional. The
basic forms in the various dimensions are: 
\begin{eqnarray*}
\Omega _{b}^{0} &=&\left\{ f\left( z\right) \right\} \\
\Omega _{b}^{1} &=&\left\{ g_{1}\left( z\right) dz+\left( 1-z^{2}\right)
g_{2}(z)d\theta +g_{3}\left( z\right) ds\right\} \\
\Omega _{b}^{2} &=&\left\{ h_{1}\left( z\right) dz\wedge d\theta +\left(
1-z^{2}\right) h_{2}(z)d\theta \wedge ds+h_{3}\left( z\right) dz\wedge
ds\right\} \\
\Omega _{b}^{3} &=&\left\{ k\left( z\right) dz\wedge d\theta \wedge
ds\right\} ,
\end{eqnarray*}
where all of the functions above are smooth in a neighborhood of $\left[ 0,1%
\right] $. An elementary calculation shows that $h^{0}=h^{1}=h^{2}=h^{3}=1 $%
, so that $\chi \left( M,\mathcal{F}\right) =0$.

We now compute the basic Euler characteristic using our theorem. The
stratification of $\left( M,\mathcal{F}\right) $ is $M\left( H_{1}\right)
\coprod M\left( H_{2}\right) $, where $M\left( H_{1}\right) $ is the union
of the two ``polar'' leaf closures ($z=\pm 1$) , and $M\left( H_{2}\right) $
is the complement of $M\left( H_{1}\right) $. Note that each orientation
bundle $\mathcal{O}_{M\left( H_{i}\right) \diagup \overline{\mathcal{F}}}$
is trivial. Next, $\chi \left( M\left( H_{2}\right) \diagup \overline{%
\mathcal{F}}\right) =\chi \left( \text{open interval}\right) =-1$, and $\chi
\left( M\left( H_{1}\right) \diagup \overline{\mathcal{F}}\right) =\chi
\left( \text{disjoint union of two points}\right) =2$.

\noindent Observe that $\chi \left( L_{1},\mathcal{F},\mathcal{O}_{M\left(
H_{1}\right) \diagup \overline{\mathcal{F}}}\right) =\chi \left( L_{1},%
\mathcal{F}\right)=0,$ since this is a taut, codimension-$1$ foliation.
Also, $\chi \left( L_{2},\mathcal{F}, \mathcal{O}_{M\left( H_{2}\right)
\diagup \overline{\mathcal{F}}}\right) =\chi \left( L_{2},\mathcal{F}\right) 
$ $=1-2+1=0$, since the basic forms restricted to $L_{2}$ consist of the
span of the set of closed forms $\left\{ 1,d\theta ,ds,d\theta \wedge
ds\right\} $. Thus, 
\begin{eqnarray*}
\chi \left( M,\mathcal{F}\right) &=&\sum_{i}\chi \left( M\left( H_{i}\right)
\diagup \overline{\mathcal{F}}\right) \chi \left( L_{i},\mathcal{F}, 
\mathcal{O}_{M\left( H_{i}\right) \diagup \overline{\mathcal{F}}}\right) \\
&=&2\cdot 0+\left( -1\right) \cdot 0=0,
\end{eqnarray*}
as we have already seen.

Note that taut foliations of odd codimension will always have a zero Euler
characteristic, by Poincare duality. Open Question: will these foliations
always have a zero basic index?
\end{example}

The following example is a codimension two transversally oriented Riemannian
foliation in which all the leaf closures have codimension one. The leaf
closure foliation is not transversally orientable, and the basic manifold is
a flat Klein bottle with an $O(2)$--action. The two leaf closures with $%
\mathbb{Z}_{2}$ holonomy correspond to the two orbits of type $\left( 
\mathbb{Z}_{2}\right) $, and the other orbits have trivial isotropy.

\begin{example}
This foliation is the suspension of an irrational rotation of the flat torus
and a $\mathbb{Z}_{2}$--action. Let $X$ be any closed Riemannian manifold
such that $\pi _{1}(X)=\mathbb{Z}*\mathbb{Z}$~, the free group on two
generators $\{\alpha ,\beta \}$. We normalize the volume of $X$ to be 1. Let 
$\widetilde{X}$ be the universal cover. We define $M=\widetilde{X}\times
S^{1}\times S^{1}\diagup \pi _{1}(X)$, where $\pi _{1}(X)$ acts by deck
transformations on $\widetilde{X}$ and by $\alpha \left( \theta ,\phi
\right) =\left( 2\pi -\theta ,2\pi -\phi \right) $ and $\beta \left( \theta
,\phi \right) =\left( \theta ,\phi +\sqrt{2}\pi \right) $ on $S^{1}\times
S^{1}$. We use the standard product--type metric. The leaves of $\mathcal{F}$
are defined to be sets of the form $\left\{ (x,\theta ,\phi )_{\sim
}\,|\,x\in \widetilde{X} \right\} $. Note that the foliation is
transversally oriented. The leaf closures are sets of the form 
\begin{equation*}
\overline{L}_{\theta }=\left\{ (x,\theta ,\phi )_{\sim }\,|\,x\in \widetilde{%
X},\phi \in [0,2\pi ]\right\} \bigcup \left\{ (x,2\pi -\theta ,\phi )_{\sim
}\,|\,x\in \widetilde{X},\phi \in [0,2\pi ]\right\}
\end{equation*}

The basic forms are: 
\begin{eqnarray*}
\Omega _{b}^{0} &=&\left\{ f\left( \theta \right) \right\} \\
\Omega _{b}^{1} &=&\left\{ g_{1}\left( \theta \right) d\theta +g_{2}(\theta
)d\phi \right\} \\
\Omega _{b}^{2} &=&\left\{ h(\theta )d\theta \wedge d\phi \right\} ,
\end{eqnarray*}%
where the functions are smooth and satisfy 
\begin{eqnarray*}
f\left( 2\pi -\theta \right) &=&f\left( \theta \right) \\
g_{i}\left( 2\pi -\theta \right) &=&-g_{i}\left( \theta \right) \\
h\left( 2\pi -\theta \right) &=&h\left( \theta \right) .
\end{eqnarray*}%
A simple argument shows that $h^{0}=h^{2}=1$ and $h^{1}=0$. Thus, $\chi
\left( M,\mathcal{F}\right) =2$. The basic manifold $\widehat{W}$ is an $%
O(2) $--manifold, defined by $\widehat{W}=[0,\pi ]\times S^{1}\diagup \sim $
, where the circle has length $1$ and $\left( \theta =0\text{ or }\pi
,\gamma \right) \sim \left( \theta =0\text{ or }\pi ,-\gamma \right) $. This
is a Klein bottle, since it is the connected sum of two projective planes. $%
O(2)$ acts on $\widehat{W}$ via the usual action on $S^{1}$.

Next, we compute the basic Euler characteristic using our theorem. The
stratification of $\left( M,\mathcal{F}\right) $ is $M\left( H_{1}\right)
\coprod M\left( H_{2}\right) $, where $M\left( H_{1}\right) $ is the union
of the two leaf closures $\theta _{2}=0$ and $\theta _{2}=\pi $, and $%
M\left( H_{2}\right) $ is the complement of $M\left( H_{1}\right) $. Note
that the orientation bundle $\mathcal{O}_{M\left( H_{2}\right) \diagup 
\overline{\mathcal{F}}}$ is trivial since an interval is orientable, and $%
\mathcal{O}_{M\left( H_{1}\right) \diagup \overline{\mathcal{F}}}$ is
trivial even though those leaf closures are not transversally oriented
(since the points are oriented!). Next, 
\begin{equation*}
\chi \left( M\left( H_{2}\right) \diagup \overline{\mathcal{F}}\right) =\chi
\left( \text{open interval}\right) =-1,
\end{equation*}%
and 
\begin{equation*}
\chi \left( M\left( H_{1}\right) \diagup \overline{\mathcal{F}}\right) =\chi
\left( \text{disjoint union of two points}\right) =2.
\end{equation*}%
Observe that $\chi \left( L_{2},\mathcal{F},\mathcal{O}_{M\left(
H_{2}\right) \diagup \overline{\mathcal{F}}}\right) =\chi \left( L_{2},%
\mathcal{F}\right) $ $=0$, since each representative leaf $L_{2}$ is a taut
(since it is a suspension), codimension $1$ foliation, and thus ordinary
Poincare duality holds (\cite{T},\cite{PaRi}): $\dim H_{B}^{0}\left( L_{2},%
\mathcal{F}\right) =\dim H_{B}^{1}\left( L_{2},\mathcal{F}\right) =1$. On
the other hand, $\chi \left( L_{1},\mathcal{F},\mathcal{O}_{M\left(
H_{1}\right) \diagup \overline{\mathcal{F}}}\right) =\chi \left( L_{1},%
\mathcal{F}\right) =1$, since each such leaf closure has $\dim
H_{B}^{0}\left( L_{1},\mathcal{F}\right) =1$ but $\dim H_{B}^{1}\left( L_{1},%
\mathcal{F}\right) =0$ since there are no basic one-forms. By our theorem,
we conclude that 
\begin{eqnarray*}
\chi \left( Y,\mathcal{F}\right) &=&\sum_{i}\chi \left( M\left( H_{i}\right)
\diagup \overline{\mathcal{F}}\right) \chi \left( L_{i},\mathcal{F},\mathcal{%
\ O}_{M\left( H_{i}\right) \diagup \overline{\mathcal{F}}}\right) \\
&=&2\cdot 1+\left( -1\right) \cdot 0=2,
\end{eqnarray*}%
as we found before by direct calculation.
\end{example}

The next example is a codimension two Riemannian foliation with dense
leaves, such that some leaves have holonomy but most do not. The basic
manifold is a point, the fixed point set of the $O\left( 2\right) $ action.
The isotropy group $O(2)$ measures the holonomy of some of the leaves
contained in the leaf closure.

\begin{example}
This Riemannian foliation is a suspension of a pair of rotations of the
sphere $S^{2}$. Let $X$ be any closed Riemannian manifold such that $\pi
_{1}(X)=\mathbb{Z}\ast \mathbb{Z}$~, that is the free group on two
generators $\{\alpha ,\beta \}$. We normalize the volume of $X$ to be 1. Let 
$\widetilde{X}$ be the universal cover. We define $M=\widetilde{X}\times
S^{2}\diagup \pi _{1}(X)$. The group $\pi _{1}(X)$ acts by deck
transformations on $\widetilde{X}$ and by rotations on $S^{2}$ in the
following ways. Thinking of $S^{2}$ as imbedded in $\mathbb{R}^{3}$, let $%
\alpha $ act by an irrational rotation around the $z$--axis, and let $\beta $
act by an irrational rotation around the $x$--axis. We use the standard
product--type metric. As usual, the leaves of $\mathcal{F}$ are defined to
be sets of the form $\left\{ (x,v)_{\sim }\,|\,x\in \widetilde{X}\right\} $.
Note that the foliation is transversally oriented, and a generic leaf is
simply connected and thus has trivial holonomy. Also, the every leaf is
dense. The leaves $\{\left( x,(1,0,0)\right) _{\sim }\}$ and $\{\left(
x,(0,0,1)\right) _{\sim }\}$ have nontrivial holonomy; the closures of their
infinitesimal holonomy groups are copies of $SO(2)$. Thus, a leaf closure in 
$\widehat{M}$ covering the leaf closure $M$ has structure group $SO(2)$ and
is thus all of $\widehat{M}$, so that $\widehat{W}$ is a point. The only
basic forms are constants and $2$ forms of the form $CdV$, where $C$ is a
constant and $dV$ is the volume form on $S^{2}$. Thus $h^{0}=h^{2}=1$ and $%
h^{1}=0$, so that $\chi \left( M,\mathcal{F}\right) =2$.

Our theorem in this case, since there is only one stratum, is 
\begin{eqnarray*}
\chi \left( M,\mathcal{F}\right) &=&\sum_{i}\chi \left( M\left( H_{i}\right)
\diagup \overline{\mathcal{F}}\right) \chi \left( L_{i},\mathcal{F},\mathcal{%
\ \ \ O}_{M\left( H_{i}\right) \diagup \overline{\mathcal{F}}}\right) \\
&=&\chi \left( \text{point}\right) \chi \left( M,\mathcal{F}\right) \\
&=&\chi \left( M,\mathcal{F}\right) ,
\end{eqnarray*}
which is perhaps not very enlightening.
\end{example}

The following example is a codimension two Riemannian foliation that is not
taut. This example is in \cite{Car}.

\begin{example}
Consider the flat torus $T^{2}=\mathbb{R}^{2}\diagup \mathbb{Z}^{2}$.
Consider the map $F:T^{2}\rightarrow T^{2}$ defined by 
\begin{equation*}
F\left( 
\begin{array}{c}
x \\ 
y%
\end{array}
\right) =\left( 
\begin{array}{cc}
2 & 1 \\ 
1 & 1%
\end{array}
\right) \left( 
\begin{array}{c}
x \\ 
y%
\end{array}
\right) \,\mod1
\end{equation*}
Let $M=[0,1]\times T^{2}\diagup \sim $, where $\left( 0,a\right) \sim \left(
1,F(a)\right) $. Let $v$, $v^{\prime }$ be orthonormal eigenvectors of the
matrix above, corresponding to the eigenvalues $\frac{3+\sqrt{5}}{2}$, $%
\frac{3-\sqrt{5}}{2}$, respectively. Let the linear foliation $\mathcal{F}$
be defined by the vector $v^{\prime }$ on each copy of $T^{2}$. Notice that
every leaf is simply connected and that the leaf closures are of the form $%
\{t\}\times T^{2}$, and this foliation is Riemannian if we choose a suitable
metric. For example, we choose the metric along $[0,1]$ to be standard and
require each torus to be orthogonal to this direction. Then we define the
vectors $v$ and $v^{\prime }$ to be orthogonal in this metric and let the
lengths of $v$ and $v^{\prime }$ vary smoothly over $[0,1]$ so that $\Vert
v\Vert (0)=\frac{3+\sqrt{5}}{2}\Vert v\Vert (1)$ and $\Vert v^{\prime }\Vert
(0)=\frac{3-\sqrt{5}}{2}\Vert v^{\prime }\Vert (1)$. Let $\overline{v}%
=a\left( t\right) v$, $\overline{v^{\prime }}=b\left( t\right) v^{\prime }$
be the resulting renormalized vector fields. The basic manifold is a torus,
and the isotropy groups are all trivial. We use coordinates $(t,x,y)\in
[0,1]\times T^{2}$ to describe points of $M$. The basic forms are: 
\begin{eqnarray*}
\Omega _{b}^{0} &=&\left\{ f\left( t\right) \right\} \\
\Omega _{b}^{1} &=&\left\{ g_{1}\left( t\right) dt+g_{2}(t)\overline{v}%
^{*}\right\} \\
\Omega _{b}^{2} &=&\left\{ h(t)dt\wedge \overline{v}^{*}\right\} ,
\end{eqnarray*}
where all the functions are smooth. Note that $d\overline{v}^{*}=-\,\frac{
a^{\prime }\left( t\right) }{a\left( t\right) }\,dt\wedge \overline{v}^{*}$
By computing the cohomology groups, we get $h^{0}=h^{1}=1$, $h^{2}=0$. Thus,
the basic Euler characteristic is zero.

We now compute the basic Euler characteristic using our theorem. There is
only one stratum, and the leaf closure space is $S^{1}$. The foliation
restricted to each leaf closure is an irrational flow on the torus. Thus, 
\begin{eqnarray*}
\chi \left( M,\mathcal{F}\right) &=&\sum_{i}\chi \left( M\left( H_{i}\right)
\diagup \overline{\mathcal{F}}\right) \chi \left( L_{i},\mathcal{F},\mathcal{%
\ O}_{M\left( H_{i}\right) \diagup \overline{\mathcal{F}}}\right) \\
&=&\chi \left( S^{1}\right) \chi \left( \{t\}\times T^{2},\mathcal{F}\right)
\\
&=&0\cdot 0=0,
\end{eqnarray*}%
as we have already seen.
\end{example}

Following is an example of using the representation-valued basic index
theorem, in this case applied to the Euler characteristic (Theorem \ref%
{representationValuedGaussBonnetThm}).

\begin{example}
Let $M=\mathbb{R}\times _{\phi }T^{2}$ be the suspension of the torus $T^{2}=%
\mathbb{R}^{2}\diagup \mathbb{Z}^{2}$, constructed as follows. The action $%
\phi :\mathbb{Z}\rightarrow \mathrm{Isom}\left( T^{2}\right) $ is generated
by a $\frac{\pi }{2}$ rotation. The Riemannian foliation $\mathcal{F}$ is
given by the $\mathbb{R}$-parameter curves. Explicitly, $k\in \mathbb{Z}$
acts on $\left( 
\begin{array}{c}
y_{1} \\ 
y_{2}%
\end{array}%
\right) $ by 
\begin{equation*}
\phi \left( k\right) \left( 
\begin{array}{c}
y_{1} \\ 
y_{2}%
\end{array}%
\right) =\left( 
\begin{array}{cc}
0 & -1 \\ 
1 & 0%
\end{array}%
\right) ^{k}\left( 
\begin{array}{c}
y_{1} \\ 
y_{2}%
\end{array}%
\right) .
\end{equation*}%
Endow $T^{2}$ with the standard flat metric. The basic harmonic forms have
basis $\left\{ 1,dy_{1},dy_{2},dy_{1}\wedge dy_{2}\right\} $. Let $\rho _{j}$
be the irreducible character defined by $k\in \mathbb{Z}\mapsto e^{ikj\pi
/2} $. Then the basic de Rham operator $\left( d+\delta _{b}\right) ^{\rho
_{0}}$ on $\mathbb{Z}$-invariant basic forms has kernel $\left\{
c_{0}+c_{1}dy_{1}\wedge dy_{2}:c_{0},c_{1}\in \mathbb{C}\right\} $. One also
sees that $\ker \left( d+\delta _{b}\right) ^{\rho _{1}}=\mathrm{span}%
\left\{ idy_{1}+dy_{2}\right\} $, $\ker \left( d+\delta _{b}\right) ^{\rho
_{2}}=\left\{ 0\right\} $, and $\ker \left( d+\delta _{b}\right) ^{\rho
_{3}}=\mathrm{span}\left\{ -idy_{1}+dy_{2}\right\} $. Then 
\begin{equation*}
\chi ^{\rho _{0}}\left( M,\mathcal{F}\right) =2,\chi ^{\rho _{1}}\left( M,%
\mathcal{F}\right) =\chi ^{\rho _{3}}\left( M,\mathcal{F}\right) =-1,\chi
^{\rho _{2}}\left( M,\mathcal{F}\right) =0.
\end{equation*}%
This illustrates the point that it is not possible to use the Atiyah-Singer
integrand on the quotient of the principal stratum to compute even the
invariant index alone. Indeed, the Atiyah-Singer integrand would be a
constant times the Gauss curvature, which is identically zero. In these
cases, the three singular points $a_{1}=\left( 
\begin{array}{c}
0 \\ 
0%
\end{array}%
\right) ~,a_{2}=\left( 
\begin{array}{c}
0 \\ 
\frac{1}{2}%
\end{array}%
\right) ~,a_{3}=\left( 
\begin{array}{c}
\frac{1}{2} \\ 
\frac{1}{2}%
\end{array}%
\right) $ certainly contribute to the index. The quotient $M\diagup 
\overline{\mathcal{F}}$ is an orbifold homeomorphic to a sphere.

We now compute the Euler characteristics $\chi ^{\rho }\left( M,\mathcal{F}%
\right) $ using Theorem \ref{representationValuedGaussBonnetThm}. The strata
of the foliation are as follows. The leaves corresponding to $a_{1}$ and $%
a_{3}$ comprise the most singular stratum $M_{s}$ with isotropy $\mathbb{Z}%
_{4}$, and the leaf correspondng to $a_{2}$ is its own stratum $M_{l}$ with
isotropy isomorphic to $\mathbb{Z}_{2}$. Then 
\begin{eqnarray*}
\chi \left( M_{s}\diagup \overline{\mathcal{F}}\right) &=&2, \\
\chi \left( M_{l}\diagup \overline{\mathcal{F}}\right) &=&1, \\
\chi \left( M_{0}\diagup \overline{\mathcal{F}}\right) &=&\chi \left(
S^{2}\smallsetminus \left\{ \text{3 points}\right\} \right) =-1.
\end{eqnarray*}

In each stratum ($M_{0}$, $M_{l}$, or $M_{s}$), the representative leaf
closure is a circle, a single leaf, and each stratum is transversally
oriented. The Euler characteristic $\chi ^{\rho }\left( L_{j},\mathcal{F}%
\right) $ is one if there exists a locally constant section of the line
bundle associated to $\rho $ over $L_{j}$, and otherwise it is zero. We see
that 
\begin{equation*}
\chi ^{\rho }\left( L_{j},\mathcal{F},\mathcal{O}_{M_{j}\diagup \overline{%
\mathcal{F}}}\right) =\chi ^{\rho }\left( L_{j},\mathcal{F}\right) =\left\{ 
\begin{array}{ll}
1 & \text{if }M_{j}=M_{0}\text{ and }\rho =\rho _{0},\rho _{1},\rho _{2},%
\text{ or }\rho _{3} \\ 
1 & \text{if }M_{j}=M_{s}\text{ and }\rho =\rho _{0} \\ 
1 & \text{if }M_{j}=M_{l}\text{ and }\rho =\rho _{0}\text{ or }\rho _{2} \\ 
0~ & \text{otherwise}%
\end{array}%
\right. .
\end{equation*}%
Then Theorem \ref{representationValuedGaussBonnetThm} implies%
\begin{eqnarray*}
\chi ^{\rho }\left( M\right) &=&\left( -1\right) \left\{ 
\begin{array}{ll}
1 & \text{if }\rho =\rho _{0},\rho _{1},\rho _{2},\text{ or }\rho _{3} \\ 
0~ & \text{otherwise}%
\end{array}%
\right. +\left( 1\right) \left\{ 
\begin{array}{ll}
1 & \text{if }\rho =\rho _{0}\text{ or }\rho _{2} \\ 
0~ & \text{otherwise}%
\end{array}%
\right. +\left( 2\right) \left\{ 
\begin{array}{ll}
1 & \text{if }\rho =\rho _{0} \\ 
0~ & \text{otherwise}%
\end{array}%
\right. \\
&=&\left\{ 
\begin{array}{ll}
-1 & \text{if }M_{j}=M_{0}\text{ and }\rho =\rho _{1}\text{ or }\rho _{3} \\ 
2 & \text{if }M_{j}=M_{s}\text{ and }\rho =\rho _{0} \\ 
0 & \text{if }M_{j}=M_{l}\text{ and }\rho =\rho _{2} \\ 
0~ & \text{otherwise}%
\end{array}%
\right. ,
\end{eqnarray*}%
which agrees with the previous direct calculation.
\end{example}




\end{document}